\newtheorem{theorem}{Theorem}[section]
\newtheorem{corollary}[theorem]{Corollary}
\newtheorem{lemma}[theorem]{Lemma}
\newtheorem{proposition}[theorem]{Proposition}
\newtheorem{definition}[theorem]{Definition}
\newtheorem{remark}[theorem]{Remark}
\numberwithin{equation}{section}
\begin{document}

\title{Weak compactness techniques and coagulation equations}
\author{Philippe Lauren{\c c}ot}
\address{Institut de Math{\'e}matiques de Toulouse, UMR~5219, Universit{\'e} de Toulouse, CNRS, F--31062 Toulouse Cedex 9, France}
\email{laurenco@math.univ-toulouse.fr}

\begin{abstract}
Smoluchowski's coagulation equation is a mean-field model describing the growth of clusters by successive mergers. Since its derivation in 1916 it has been studied by several authors, using deterministic and stochastic approaches, with a blossoming of results in the last twenty years. In particular, the use of weak $L^1$-compactness techniques led to a mature theory of weak solutions and the purpose of these notes is to describe the results obtained so far in that direction, as well as the mathematical tools used.
\end{abstract}
%
%
\maketitle
%
%
\pagestyle{myheadings}
\markboth{\sc{Ph. Lauren\c cot}}{\sc{Weak compactness and coagulation}}

\section{Introduction}
\label{sec:1}

Coagulation is one of the driving mechanisms for cluster growth, by which clusters (or particles) increase their sizes by successive mergers. Polymer and colloidal chemistry, aerosol science, raindrops and soot formation, astrophysics (formation of planets and galaxies), hematology, and animal herding are among the fields where coagulation phenomena play an important role, see  \cite{Dr72, Fr00, GL95, SP06} for instance. This variety of applications has generated a long lasting interest in the modeling of coagulation processes. One of the first contributions in that direction is due to the Polish physicist Smoluchowski who derived a model for the evolution of a population of colloidal particles increasing their sizes by binary coagulation while moving according to independent Brownian motion \cite{Sm16,Sm17}. Neglecting spatial variations he came up with the discrete Smoluchowski coagulation equations 
\begin{align}
\frac{df_1}{dt} & = - \sum_{j=1}^\infty K(1,j) f_1 f_j\ , \quad t>0\ , \label{in1} \\
\frac{df_i}{dt} & = \frac{1}{2} \sum_{j=1}^{i-1} K(j,i-j) f_{i-j} f_j - \sum_{j=1}^\infty K(i,j) f_i f_j\ , \quad i\ge 2\ , \ t>0 \ . \label{in2}
\end{align}
Here the sizes of the particles are assumed to be multiples of a minimal size normalized to one and the coagulation kernel $K(i,j)$ accounts for the rate at which a particle of size $i$ and a particle of size $j$ encounter and merge into a single particle of size $i+j$. In the derivation performed in \cite{Sm16, Sm17} $K$ is computed to be
\begin{equation}
K_{sm}(x,y) := \left( x^{1/3} + y^{1/3} \right) \left( \frac{1}{x^{1/3}} + \frac{1}{y^{1/3}} \right) \label{in3}
\end{equation}
and then reduced to $K_0(x,y):=2$ to allow for explicit computations of solutions to \eqref{in1}-\eqref{in2}. 

The function $f_i$, $i\ge 1$, denotes the size distribution function of particles of size $i\ge 1$ at time $t\ge 0$ and the meaning of the reaction terms in \eqref{in1}-\eqref{in2} is the following: the second term on the right-hand side of \eqref{in2} describes the depletion of particles of size $i$ by coalescence with other particles of arbitrary size while the first term on the right-hand side of \eqref{in2} accounts for the gain of particles of size $i$ due to the merging of a particle of size $j\in\{1,\cdots,i-1\}$ and a particle of size $i-j$. Note that the assumption of a minimal size entails that there is no formation of particles of size $1$ by coagulation. Schematically, if $P_x$ stands for a generic particle of size $x$, the coagulation events taken into account in the previous model are:
\begin{equation}
P_x + P_y \longrightarrow P_{x+y}\ . \label{in4}
\end{equation}
The formation of particles of size $i$ corresponds to the choice $(x,y)=(j,i-j)$ in \eqref{in4} with $i\ge 2$ and $1 \le j \le i-1$ and the disappearance of particles of size $i$ to $(x,y)=(i,j)$ in \eqref{in4} with $i\ge 1$ and $j\ge 1$. A salient feature of the elementary coagulation reaction \eqref{in4} is that no matter is lost and we shall come back to this point later on. 

Smoluchowski's coagulation equation was later on extended to a continuous size variable $x\in (0,\infty)$ and reads \cite{Mu28}
\begin{eqnarray}
\partial_t f(t,x) & = & \frac{1}{2} \int_0^x K(y,x-y)\ f(t,y)\ f(t,x-y)\ dy \nonumber\\
& & \ - \int_0^\infty K(x,y)\ f(t,x)\ f(t,y)\ dy\,, \quad (t,x)\in (0,\infty)\times (0,\infty)\ . \label{in5} 
\end{eqnarray}
In contrast to \eqref{in1}-\eqref{in2} which is a system of countably many ordinary differential equations, equation \eqref{in5} is a nonlinear and nonlocal integral equation but the two terms of the right-hand side of \eqref{in5} have the same physical meaning as in \eqref{in1}-\eqref{in2}. The coagulation kernel $K(x,y)$ still describes the likelihood that a particle of mass $x>0$ and a particle of mass $y>0$ merge into a single particle of mass $x+y$ according to \eqref{in4}. Besides Smoluchowski's  coagulation kernel \eqref{in3} \cite{Sm16, Sm17} and the constant coagulation kernel $K_0(x,y)=2$, other coagulation kernels have been derived in the literature such as $K(x,y) = (ax+b) (ay+b)$, $a>0$, $b\ge 0$ \cite{So43}, $K(x,y) = \left( x^{1/3} + y^{1/3}\right)^{3}$, $K(x,y) = \left( x^{1/3} + y^{1/3}\right)^{2}\ \left| x^{1/3} - y^{1/3}\right|$, and $K(x,y) = x^\alpha y^\beta + x^\beta y^\alpha$, $\alpha\le 1$, $\beta\le 1$, the latter being rather a model case which includes the constant coagulation kernel $K_0$ ($\alpha=\beta=0$), the additive one $K_1(x,y):=x+y$ ($\alpha=0, \beta=1$), and the multiplicative one $K_2(x,y):=xy$ ($\alpha=\beta=1$). Observe that this short list of coagulation kernels already reveals a wide variety of behaviours for large values or small values of $(x,y)$ (bounded or unbounded) and on the diagonal $x=y$ (positive or vanishing). 

A central issue is which predictions on the coagulation dynamics can be made from the analysis of Smoluchowski's coagulation equation \eqref{in5} and to what extent these predictions depend upon the properties of the coagulation kernel $K$. It was uncovered several years ago that the evolution of \eqref{in5} could lead to two different dynamics according to the properties of $K$ and is closely related to the conservation of matter already alluded to. More precisely, recall that there is neither loss nor gain of matter during the elementary coagulation reaction \eqref{in4} and this is expected to be true as well during the time evolution of \eqref{in5}. In terms of $f$, the total mass of the particles distribution at time $t\ge 0$ is 
$$
M_1(f(t)) := \int_0^\infty x f(t,x)\ dx
$$
and mass conservation reads 
\begin{equation}
M_1(f(t)) = M_1(f(0))\ , \quad\quad t\ge 0\ , \label{in6}
\end{equation}
provided $M_1(f(0))$is finite. To check whether \eqref{in6} is true or not, we first argue formally and observe that, if $\vartheta$ is an arbitrary function, multiplying \eqref{in5} by $\vartheta(x)$ and integrating with respect to $x\in (0,\infty)$ give, after exchanging the order of integration,
\begin{align}
& \frac{d}{dt} \int_0^\infty \vartheta(x) f(t,x)\ dx \nonumber \\ 
& \qquad = \frac{1}{2} \int_0^\infty \int_0^\infty \left[ \vartheta(x+y) - \vartheta(x) - \vartheta(y) \right] K(x,y) f(t,x) f(t,y)\ dydx \label{in7}
\end{align}
Clearly, the choice $\vartheta(x)=x$ leads to a vanishing right-hand side of \eqref{in7} and provides the expected conservation of mass. However, one has to keep in mind that the previous computation is only formal as it uses Fubini's theorem without justification. That some care is indeed needed stems from \cite{LT81} where it is shown that \eqref{in6} breaks down in finite time for the multiplicative kernel $K_2(x,y)=xy$ for all non-trivial solutions. An immediate consequence of this result is that the mass-conserving solution constructed on a finite time interval in \cite{ML62a, ML62b} cannot be extended forever. Soon after the publication of \cite{LT81} a particular solution to \eqref{in1}-\eqref{in2} was constructed for $K(i,j)=(ij)^\alpha$, $\alpha\in (1/2,1)$ which fails to satisfy \eqref{in6} for all times \cite{Le83}. At the same time, it was established in \cite{LT82, Wh80} that the condition $K(i,j)\le \kappa (i+j)$ was sufficient for \eqref{in1}-\eqref{in2} to have global mass-conserving solutions, that is, solutions satisfying \eqref{in6}. Thanks to these results, a distinction was made between the so-called non-gelling kernels for which all solutions to \eqref{in1}-\eqref{in2} and \eqref{in5} satisfy \eqref{in6} and gelling kernels for which \eqref{in6} is infringed in finite time for all non-trivial solutions. The conjecture stated in the beginning of the eighties is that a coagulation kernel satisfying 
\begin{equation}
K(x,y) \le \kappa (2 + x + y)\ , \quad (x,y)\in (0,\infty)\times (0,\infty)\ , \label{in8} 
\end{equation}
is non-gelling while a coagulation kernel satisfying
\begin{equation}
K(x,y) \ge \kappa (xy)^\alpha\ , \quad \alpha>\frac{1}{2}\ , \quad (x,y)\in (0,\infty)\times (0,\infty)\ , \label{in9} 
\end{equation}
is gelling \cite{EZH84, HEZ83, LT82}. Following the contributions \cite{LT82, Wh80} the conjecture for non-gelling coagulation kernels \eqref{in8} is completely solved in \cite{BC90} for the discrete coagulation equations \eqref{in1}-\eqref{in2}, an alternative proof being given in \cite{Lt02}. It took longer for this conjecture to be solved for the continuous coagulation equation \eqref{in5}, starting from the pioneering works \cite{AB79, Me57} for bounded kernels and continuing with \cite{DS96, LM02, St89, St91}. Of particular importance is the contribution by Stewart \cite{St89} where weak $L^1$-compactness techniques were used for the first time and turned out to be a very efficient tool which was extensively used in subsequent works. As for the conjecture for gelling kernels \eqref{in9}, it was solved rather recently in \cite{EMP02, Je98}. An intermediate step is the existence of solutions to \eqref{in1}-\eqref{in2} and \eqref{in5} with non-increasing finite mass, that is, satisfying $M_1(f(t))\le M_1(f(0))$ for $t\ge 0$, see \cite{EW01, Lt00, LM02, LT81, No99, Sp84} and the references therein. 

\medskip

The purpose of these notes is twofold: on the one hand, we collect in Section~\ref{sec:2} several results on the weak compactness in $L^1$-spaces which are scattered throughout the literature and which have proved useful in the analysis of \eqref{in5}. We recall in particular the celebrated Dunford-Pettis theorem (Section~\ref{subsec:22}) which characterizes weakly compact sequences in $L^1$ with the help of the notion of uniform integrability (Section~\ref{subsec:23}). Several equivalent forms of the latter are given, including a refined version of the de la Vall\'ee Poussin theorem \cite{DM75, Le77, VP15} (Section~\ref{subsec:24}). We also point out consequences of the combination of almost everywhere convergence and weak convergence (Section~\ref{subsec:25}). On the other hand, we show in Section~\ref{sec:3} how the results stated in Section~\ref{sec:2} apply to Smoluchowski's coagulation equation \eqref{in5} and provide several existence results including that of mass-conserving solutions (Section~\ref{sec:32}). For the sake of completeness, we supplement the existence results with the occurrence of gelation in finite time for gelling kernels \cite{EMP02} (Section~\ref{sec:33}) and with uniqueness results (Section~\ref{sec:34}). For further information on coagulation equations and related problems we refer to the books \cite{Bt06, Du94b} and the survey articles \cite{Al99, LM04, Le03, Wa06}.

\medskip

We conclude the introduction with a few words on related interesting issues: we focus in these notes on the deterministic approach to the modeling of coagulation and leave aside the stochastic approach which has been initiated in \cite{Lu78, Ma68, Sm16, Sm17} and further developed in \cite{Al99, Bt02, Bt06, DFT02, DGG99, EW01, FG04, Je98, Jo03, No99} and the references therein. 

Another important line of research is the dynamics predicted by Smoluchowski's coagulation equation \eqref{in5} for large times for homogeneous non-gelling kernels \eqref{in8} and at the gelation time for homogeneous gelling kernels \eqref{in9}. In both cases the expected behaviour is of self-similar form (except for some particular kernels with homogeneity $1$) but the time and mass scales are only well identified for non-gelling kernels and for the multiplicative kernel $K_2(x,y)=xy$, see the survey articles \cite{vDE88, Le03} and the references therein. Existence of mass-conserving self-similar solutions for a large class of non-gelling kernels have been constructed recently \cite{EMRR05, FL05} and their properties studied in \cite{CM11, EM06, FL06a, MLNV11, NV11}. Still for non-gelling kernels, the existence of other self-similar solutions (with a different scaling and possibly infinite mass) is uncovered in \cite{Bt02} for the additive kernel $K_1(x,y) = x+y$ and in \cite{MP04} for the constant kernel $K_0(x,y)=2$, both results relying on the use of the Laplace transform which maps \eqref{in5} either to Burgers' equation or to an ordinary differential equation. Since the use of the Laplace transform has not proved useful for other coagulation kernels, a much more involved argument is needed to cope with a more general class of kernels \cite{NV13a, NV13b}. 

Finally, coagulation is often associated with the reverse process of fragmentation and there are several results available for coagulation-fragmentation equations, including existence, uniqueness, mass conservation, and gelation. Actually, the approach described below in Section~\ref{sec:3} works equally well for coagulation-fragmentation equations under suitable assumptions on the fragmentation rates. Besides the survey articles \cite{Du94b, LM04, Wa06}, we refer for instance to \cite{BC90, dC95, Je98, Jo03, Lt02, Sp84} for the discrete coagulation-fragmentation equations and to \cite{AB79, BL11, BL12, DS96, EW01, EMP02, ELMP03, Gi13, GW11, GLW12, La04, Lt00, LM02, MLLM97, Me57, St89, St90} for the continuous coagulation-fragmentation equations.

\section{Weak compactness in $L^1$}
\label{sec:2}

Let $(\Omega,\mathcal{B},\mu)$ be a $\sigma$-finite measure space. For $p\in [1,\infty]$, $L^p(\Omega)$ is the usual Lebesgue space and we denote its norm by $\|\cdot\|_p$. If $p\in (1,\infty)$, the reflexivity of the space $L^p(\Omega)$ warrants that any bounded sequence in $L^p(\Omega)$ has a weakly convergent subsequence. In the same vein, any bounded sequence in $L^\infty(\Omega)$ has a weakly-$\star$ convergent subsequence by a consequence of the Banach-Alaoglu theorem \cite[Corollary~3.30]{Br11} since $L^\infty(\Omega)$ is the dual of the separable space $L^1(\Omega)$ . A peculiarity of $L^1(\Omega)$ is that a similar property is not true as a consequence of the following result \cite[Appendix to Chapter~V, Section~4]{Yo95}, the space $L^1(\Omega)$ being not reflexive. 

\begin{theorem}[Eberlein-\v Smulian]\label{theb}
Let $E$ be a Banach space such that every bounded sequence has a subsequence converging in the $\sigma(E,E')$-topology. Then $E$ is reflexive.
\end{theorem}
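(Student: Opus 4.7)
The plan is to combine Kakutani's theorem with Goldstine's density theorem to turn the sequential hypothesis into weak compactness of the closed unit ball. Recall Kakutani's theorem: $E$ is reflexive if and only if its closed unit ball $B_E$ is compact in the $\sigma(E,E')$-topology; equivalently, the canonical isometric embedding $J\colon E\to E''$ satisfies $J(B_E) = B_{E''}$. Under the hypothesis, every sequence in $B_E$ is bounded and hence has a weakly convergent subsequence, and weak lower semi-continuity of the norm places the limit back in $B_E$; so $B_E$ is weakly sequentially compact. The goal is then to upgrade this to $J(B_E) = B_{E''}$.

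Fix $\xi\in B_{E''}$. By Goldstine's theorem, $J(B_E)$ is $\sigma(E'',E')$-dense in $B_{E''}$, so there is a net $(y_\alpha)\subset B_E$ with $J(y_\alpha)\to\xi$ in the weak-$\ast$ topology. The substantive part of the proof is to replace this net by a sequence that the hypothesis can bite on. I would construct $(x_n)\subset B_E$ together with an increasing family of finite sets $F_n\subset B_{E'}$ by simultaneous induction: at stage $n$, pick $x_n\in B_E$ so that $|f(x_n)-\xi(f)|<1/n$ for every $f\in F_n$ (possible by Goldstine applied to the finite system $F_n$, or equivalently by Helly's lemma); then enlarge $F_{n+1}$ by finitely many unit-norm Hahn-Banach functionals chosen to separate prospective discrepancies between $\xi$ and $J$ applied to the closed linear span of $x_1,\ldots,x_n$.

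Applying the hypothesis to the bounded sequence $(x_n)$ yields a subsequence $x_{n_k}\rightharpoonup x$ with $x\in B_E$. By construction, $f(x)=\lim_k f(x_{n_k})=\xi(f)$ for every $f\in\bigcup_n F_n$. The enrichment of the $F_n$'s, combined with the fact that the weak limit $x$ lies in the separable closed subspace $X:=\overline{\mathrm{span}}\{x_n : n\ge 1\}$ (which is convex and norm-closed, hence weakly closed), is arranged so that this countable family of identities suffices to force $J(x)=\xi$ on all of $E'$. This gives $J(B_E)=B_{E''}$ and hence reflexivity by Kakutani.

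The main obstacle is precisely this last step: $\bigcup_n F_n$ is countable while $E'$ may be highly non-separable, so pointwise agreement on $\bigcup_n F_n$ does not a priori imply identity in $E''$. The resolution is the diagonal-type bookkeeping built into the construction of $(F_n)$, which uses Hahn-Banach at each stage to adjoin exactly the functionals needed to witness any potential disagreement between $\xi$ and $J(x)$. This is the technical heart of the proof and is where the sequential (as opposed to net) convergence hypothesis is truly exploited; it is also the reason why, despite its short statement, the theorem is genuinely non-trivial and justifies the need for the more refined weak compactness criteria in $L^1$ developed in the next section.
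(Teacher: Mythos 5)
The paper does not prove this result; it is stated as a known theorem with a pointer to Yosida's book, so there is no in-paper proof to compare against, and the proposal must be judged on its own merits.

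Your outline points at the standard Whitley-style argument (Eberlein--\v Smulian for the unit ball, combined with Kakutani), and the ingredients you identify are the right ones: the reduction to $J(B_E)=B_{E''}$, the inductive construction of $(x_n)$ via Goldstine/Helly against a growing finite set of functionals, and the use of Mazur's lemma to place the weak limit $x$ of the extracted subsequence in the norm-closed separable span $X=\overline{\mathrm{span}}\{x_n\}$. The gap is exactly where you flag it: the ``diagonal-type bookkeeping'' needs to become a precise norming argument, and as phrased it is slightly misdirected --- at stage $n$ you cannot adjoin functionals to ``witness any potential disagreement between $\xi$ and $J(x)$'' because $x$ is only determined after the whole sequence has been built. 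The correct move is to norm, at stage $n$, the finite-dimensional subspace $Y_n:=\mathrm{span}\{\xi,J(x_1),\dots,J(x_n)\}\subset E''$: by compactness of the unit sphere of $Y_n$ one can choose finitely many $g\in B_{E'}$ with $\|\eta\|\le 2\max_g|\eta(g)|$ for every $\eta\in Y_n$, and these are the functionals adjoined to $F_{n+1}$. After extracting $x_{n_k}\rightharpoonup x\in B_E$, the construction gives $(\xi-J(x))(f)=0$ for every $f\in\bigcup_n F_n$, and since $J(x)\in\overline{\mathrm{span}}\{J(x_n)\}$ by Mazur, $\xi-J(x)$ lies in the norm closure of $\bigcup_n Y_n$. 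Given $\varepsilon>0$, pick $m$ and $\eta\in Y_m$ with $\|\xi-J(x)-\eta\|<\varepsilon$; for the norming functionals $g$ of $Y_m$ one has $|\eta(g)|=|\eta(g)-(\xi-J(x))(g)|<\varepsilon$, hence $\|\eta\|<2\varepsilon$ and $\|\xi-J(x)\|<3\varepsilon$. Letting $\varepsilon\to0$ yields $J(x)=\xi$, and Kakutani's theorem finishes. Without this closure-plus-norming step the sketch does not yet explain why agreement of $\xi$ and $J(x)$ on the countable set $\bigcup_n F_n$ forces equality in $E''$ --- the phrase ``force $J(x)=\xi$ on all of $E'$'' suggests a countable-density argument on $E'$, which is not available and is not how the proof actually closes.
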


\subsection{Failure of weak compactness in $L^1$}
\label{subsec:21}

In a simpler way, a bounded sequence in $L^1(\Omega)$ need not be weakly sequentially compact  in $L^1(\Omega)$ as the following examples show:

\begin{description}
\item[\textbf{Concentration}] Consider $f\in C^\infty(\mathbb{R}^N)$ such that $f\ge 0$, $\mbox{ supp }f \subset B(0,1):= \{ x\in\mathbb{R}^N\ :\ |x|<1\}$, and $\|f\|_1=1$. For $n\ge 1$ and $x\in\mathbb{R}^N$, we define $f_n(x) = n^N f(nx)$ and note that 
\begin{equation}
\label{a1}
\|f_n\|_1=\|f\|_1=1\,.
\end{equation}
Thus $(f_n)_{n\ge 1}$ is bounded in $L^1(\mathbb{R}^N)$. Next, the function 
$f$ being compactly supported, we have
$$
\lim_{n\to \infty} f_n(x) = 0 \;\;\mbox{ for }\;\ x\ne 0\,.
$$
The only possible weak limit of $(f_n)_{n\ge 1}$ in $L^1(\mathbb{R}^N)$ would then be zero which contradicts \eqref{a1}. Consequently, $(f_n)_{n\ge 1}$ has no cluster point in the weak topology of $L^1(\mathbb{R}^N)$. In fact, $(f_n)_{n\ge 1}$ converges narrowly towards a bounded measure, the Dirac mass, that is,
$$
\lim_{n\to \infty} \int_{\mathbb{R}^N} f_n(x)\ \psi(x)\ dx = \psi(0) \;\;\mbox{ for all }\;\; \psi\in BC(\mathbb{R}^N)\,,
$$
where $BC(\mathbb{R}^N)$ denotes the space of bounded and continuous functions on $\mathbb{R}^N$. The sequence $(f_n)_{n\ge 1}$ is not weakly sequentially compact in $L^1(\mathbb{R}^N)$ because it \textsl{concentrates} in the neighbourhood of $x=0$. Indeed, for $r>0$, we have
$$
\int_{\{|x|\le r\}} f_n(x)\ dx = \int_{\{|x|\le n
r\}} f(x)\ dx \mathop{\longrightarrow}_{n \to \infty} 1\,,
$$ 
and
$$
\int_{\{|x|\ge r\}} f_n(x)\ dx = \int_{\{|x|\ge n
r\}} f(x)\ dx \mathop{\longrightarrow}_{n \to \infty} 0\,.
$$ 

\item[\textbf{Vanishing}] For $n\ge 1$ and $x\in\mathbb{R}$, we set
$f_n(x) = \exp{\left( -|x-n| \right)}$. Then 
\begin{equation}
\label{a2}
\|f_n\|_1= 2\,,
\end{equation}
and $(f_n)_{n\ge 1}$ is bounded in $L^1(\mathbb{R})$. Next, 
$$
\lim_{n\to \infty} f_n(x) = 0 \;\;\mbox{ for all }\;\; x\in\mathbb{R}\,,$$ 
and we argue as in the previous example to conclude that $(f_n)_{n\ge 1}$ has no cluster point for the weak topology of $L^1(\mathbb{R})$. In that case, the sequence $(f_n)_{n\ge 1}$ ``escapes at infinity'' in the sense that, for every $r>0$,
$$
\lim_{n\to \infty} \int_r^\infty f_n(x)\ dx = 2 \;\;\mbox{ and
}\;\; \lim_{n\to \infty} \int_{-\infty}^r f_n(x)\ dx = 0\,.
$$ 
\end{description}

The above two examples show that the mere boundedness of a sequence in $L^1(\Omega)$ does not guarantee at all its weak sequential compactness and additional information is thus required for the latter to be true. It actually turns out that, roughly speaking, the only phenomena that prevent a bounded sequence in $L^1(\Omega)$ from being weakly sequentially compact in $L^1(\Omega)$ are the concentration and vanishing phenomena described in the above examples. More precisely, a necessary and sufficient condition for the weak sequential compactness in $L^1(\Omega)$ of a bounded sequence in $L^1(\Omega)$ is given by the Dunford-Pettis theorem which is recalled below. 
 
\subsection{The Dunford-Pettis theorem}
\label{subsec:22}

We first introduce the modulus of uniform integrability of a bounded subset $\mathcal{F}$ of $L^1(\Omega)$ which somehow measures how elements of $\mathcal{F}$ concentrate on sets of small measures. 

\begin{definition}\label{deb100}
Let $\mathcal{F}$ be a bounded subset of $L^1(\Omega)$. 
For $\varepsilon>0$, we set
\begin{equation}
\label{b101}
\eta\{\mathcal{F},\varepsilon\} := \sup{ \left\{ \int_A |f|\ d\mu\ : \  f\in \mathcal{F}\,, \; A\in \mathcal{B}\,, \; \mu(A)\le \varepsilon \right\} }\,,
\end{equation}
and we define the modulus of uniform integrability $\eta\{\mathcal{F}\}$ of $\mathcal{F}$ by
\begin{equation}
\label{b102}
\eta\{\mathcal{F}\} := \lim_{\varepsilon\to 0} \eta\{\mathcal{F},\varepsilon\} = \inf_{\varepsilon>0} \eta\{\mathcal{F},\varepsilon\}\,. 
\end{equation}
\end{definition}

With this definition, we can state the Dunford-Pettis theorem, see \cite[Part~2, Chapter~VI, \S~2]{Be85}, \cite[p.~33--44]{DM75}, and \cite[IV.8]{DS57} for instance.  

\begin{theorem}\label{thb103}
Let $\mathcal{F}$ be a subset of $L^1(\Omega)$. 
The following two statements are equivalent:
\begin{itemize}
\item[\textbf{(a)}] $\mathcal{F}$ is relatively weakly sequentially compact in $L^1(\Omega)$.
\item[\textbf{(b)}] $\mathcal{F}$ is a bounded subset of $L^1(\Omega)$ 
satisfying the following two properties: 
\begin{equation}
\label{b104}
\eta\{\mathcal{F}\}=0\,,
\end{equation}
and, for every $\varepsilon>0$, there is $\Omega_\varepsilon\in\mathcal{B}$ such that $\mu(\Omega_\varepsilon)<\infty$ and
\begin{equation}
\label{b105}
\sup_{f\in \mathcal{F}}\ \int_{\Omega\setminus \Omega_\varepsilon} 
|f|\ d\mu \le \varepsilon.
\end{equation}
\end{itemize}
\end{theorem}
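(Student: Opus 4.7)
The plan is to prove the two implications separately, using the $\sigma$-finiteness of $\mu$ throughout by writing $\Omega=\bigcup_m E_m$ with $E_m$ increasing and $\mu(E_m)<\infty$. For the easy direction $(a)\Rightarrow(b)$, I would argue by contradiction. Boundedness of $\mathcal{F}$ is automatic: weak sequential compactness implies weak boundedness, hence norm boundedness by the uniform boundedness principle. To obtain $\eta\{\mathcal{F}\}=0$, suppose instead that there are $\delta>0$, $f_n\in\mathcal{F}$ and $A_n\in\mathcal{B}$ with $\mu(A_n)\to 0$ but $\int_{A_n}|f_n|\,d\mu\ge\delta$; passing to a subsequence $f_n\rightharpoonup f$ in $L^1(\Omega)$, testing against $\mathbf{1}_A$ for $A\in\mathcal{B}$ gives setwise convergence of the signed measures $\nu_n(A):=\int_A f_n\,d\mu$, and a Vitali-Hahn-Saks-type argument forces uniform absolute continuity of $\{\nu_n\}$. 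After separating the positive and negative parts of $f_n$, this contradicts $\int_{A_n}|f_n|\,d\mu\ge\delta$. The tightness property \eqref{b105} is then established by the same scheme, replacing the sets $A_n$ by complements of the monotone exhaustion $E_m$.

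The substantial direction is $(b)\Rightarrow(a)$. Given $(f_n)\subset\mathcal{F}$, I would first use \eqref{b105} to select, for each $k\ge 1$, a set $\Omega_{1/k}$ of finite measure carrying all but mass $1/k$ uniformly in $n$. On each $\Omega_{1/k}$ one can choose a countable family $(g_j^{(k)})$ of simple functions dense in $L^\infty(\Omega_{1/k})$ for the $L^1$-duality topology; a diagonal extraction then yields a subsequence $(f_{n_\ell})$ along which every pairing $\int f_{n_\ell} g_j^{(k)}\,d\mu$ converges. The condition $\eta\{\mathcal{F}\}=0$ is precisely what is needed to promote this convergence from the countable dense family to every $g\in L^\infty(\Omega)$, since uniform integrability turns pointwise bounded approximations into uniform ones. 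The resulting functional $\Lambda(g):=\lim_\ell\int f_{n_\ell}g\,d\mu$ is bounded on $L^\infty(\Omega)$, and the set function $\nu(A):=\Lambda(\mathbf{1}_A)$ is countably additive and $\mu$-absolutely continuous (again by \eqref{b104} and \eqref{b105}); the Radon-Nikodym theorem then produces $f\in L^1(\Omega)$ with $\nu(A)=\int_A f\,d\mu$, yielding $f_{n_\ell}\rightharpoonup f$ in $L^1(\Omega)$.

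The main obstacle lies in this second direction: because $L^1$ is not reflexive and $L^\infty$ is not separable, a naive Banach-Alaoglu extraction in the bidual only produces a finitely additive limit measure, and one has to show that this limit is actually countably additive and $\mu$-absolutely continuous, i.e.\ lies in $L^1\hookrightarrow (L^\infty)'$. The two conditions in (b) are exactly tailored to that task: \eqref{b105} rules out escape of mass to the complements of $\sigma$-finite pieces, while \eqref{b104} rules out a singular part in the limit. A sleeker alternative would invoke the de la Vall\'ee Poussin theorem recalled in Section~\ref{subsec:24}, which produces a superlinear convex $\Phi$ with $\sup_n\int\Phi(|f_n|)\,d\mu<\infty$, placing $(f_n)$ inside a reflexive Orlicz space and reducing the extraction to the reflexive case; the delicate step then becomes the identification of the Orlicz weak limit with an $L^1$ weak limit, again through \eqref{b104}-\eqref{b105}.
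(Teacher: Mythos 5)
The paper does not actually prove Theorem~\ref{thb103}; it states it and refers the reader to \cite{Be85}, \cite{DM75}, and \cite{DS57}. There is therefore no in-text proof to compare against, and the remarks below assess your sketch on its own terms.

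The direction (a)~$\Rightarrow$~(b) via setwise convergence and the Vitali--Hahn--Saks theorem is sound, but the phrase ``after separating the positive and negative parts of $f_n$'' needs to be made precise. Decomposing the sequence into $f_n^{\pm}$ does not help, since $f_n^{\pm}$ do not inherit weak convergence from $f_n$. What works is to decompose the \emph{set}: for $A\in\mathcal{B}$ write $\int_A|f_n|\,d\mu=|\nu_n(A\cap\{f_n>0\})|+|\nu_n(A\cap\{f_n<0\})|$, where $\nu_n(B):=\int_B f_n\,d\mu$, and apply the uniform absolute continuity of $\{\nu_n\}$ delivered by Vitali--Hahn--Saks to each piece, both of which have measure $\le\mu(A)$. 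For \eqref{b105} you should invoke the \emph{uniform countable additivity} of $\{\nu_n\}$ (also part of the Vitali--Hahn--Saks/Nikodym package) applied to $\Omega\setminus E_m\downarrow\emptyset$, since $\mu(\Omega\setminus E_m)$ need not be small.

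The direction (b)~$\Rightarrow$~(a) is where the real work is, and while the architecture you describe (reduce to finite measure by \eqref{b105}, diagonalize, identify the limit through Radon--Nikodym) is the classical one, the pivotal step is left vague. ``A countable family of simple functions dense in $L^\infty(\Omega_{1/k})$ for the $L^1$-duality topology'' does not quite parse: $L^\infty$ is not separable, and a Banach--Alaoglu extraction in $(L^\infty)^{\ast}$ produces only a sub\emph{net}. Two standard ways to make your plan rigorous are (i) truncation: set $f_n^{(c)}:=f_n\mathbf{1}_{\{|f_n|\le c\}}$, observe that on each $\Omega_{1/k}$ (finite measure) these are bounded in $L^2$ and hence admit weakly convergent subsequences by reflexivity, use \eqref{b104} to make $\sup_n\|f_n-f_n^{(c)}\|_1$ small as $c\to\infty$ and \eqref{b105} to control the complement, and close by a double diagonal in $c$ and $k$; or (ii) restrict to the countably generated $\sigma$-algebra generated by $(f_n)_{n\ge1}$ and an exhausting sequence $(E_m)$, on which $L^1$ is separable and the unit ball of $L^\infty$ is weak-$\ast$ metrizable, so that a genuine subsequence can be extracted. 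Finally, the ``sleeker alternative'' via Orlicz spaces is overstated: the $\Phi$ produced by de la Vall\'ee Poussin has $\Phi'$ concave, so $\Phi$ grows at most quadratically (Remark~\ref{reb128}) and typically fails the $\nabla_2$-condition (equivalently, $\Phi^{\ast}$ fails $\Delta_2$); thus $L_\Phi$ is in general not reflexive, and one cannot simply ``reduce to the reflexive case'' that way. The reflexive space one should truncate into is $L^2$, as in (i).
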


As already mentioned, the two conditions required in Theorem~\ref{thb103}~(b) to guarantee the weak sequential compactness of $\mathcal{F}$ in $L^1(\Omega)$ exclude the concentration and vanishing phenomena: indeed, the condition \eqref{b104} implies that no concentration can take place while \eqref{b105} prevents the escape to infinity which arises in the vanishing phenomenon.

\begin{remark} \label{reb106} The condition \eqref{b105} is automatically fulfilled as soon as $\mu(\Omega)<\infty$ (with $\Omega_\varepsilon=\Omega$ for each $\varepsilon>0$).
\end{remark}

Thanks to the Dunford-Pettis theorem, the weak sequential compactness of a subset of $L^1(\Omega)$ can be checked by investigating the behaviour of its elements on measurable subsets of $\Omega$. However, the characteristic functions of these sets being not differentiable, applying the Dunford-Pettis theorem in the field of partial differential equations might be not so easy. Indeed, as (partial) derivatives are involved, test functions are usually required to be at least weakly differentiable (or in a Sobolev  space) which obviously excludes characteristic functions. Fortunately, an alternative formulation of the condition $\eta\{\mathcal{F}\}=0$ is available and turns out to be more convenient to use in this field.

\subsection{Uniform integrability in $L^1$}
\label{subsec:23}

\begin{definition} \label{deb200}
A subset $\mathcal{F}$ of $L^1(\Omega)$ is said to be uniformly integrable if $\mathcal{F}$ is a bounded subset of $L^1(\Omega)$ such that 
\begin{equation}
\label{b201}
\lim_{c\to \infty}\ \sup_{f\in \mathcal{F}}\ \int_{\{ |f|\ge c \}} 
|f|\ d\mu = 0\ .
\end{equation}
\end{definition}

Before relating the uniform integrability property with the weak sequential compactness in $L^1(\Omega)$, let us give some simple examples of uniformly integrable subsets:
\begin{itemize}
\item If $\mathcal{F}$ is a bounded subset of $L^p(\Omega)$ for some $p\in(1,\infty)$, then $\mathcal{F}$ is uniformly integrable as 
$$
\sup_{f\in \mathcal{F}}\ \int_{\{ | f|\ge c \}} | f|\ d\mu \le \frac{1}{c^{p-1}}\ \sup_{f\in \mathcal{F}}\ \int_{\{ | f|\ge c \}} | f|^p\ d\mu \le \frac{1}{c^{p-1}}\ \sup_{f\in \mathcal{F}}\{ \| f\|_p^p\}\ .
$$
\item If $f_0\in L^1(\Omega)$, the set $\mathcal{F} := \left\{ f\in L^1(\Omega)\ : \ | f| \le | f_0| \;\; \mu-\mbox{a.e. } \right\}$ is uniformly integrable.
\item If $\mathcal{F}$ is a uniformly integrable subset of $L^1(\Omega)$, then so is the set $\mathcal{F}^*$ defined by  $\mathcal{F}^* := \left\{ | f|\ : \ f\in\mathcal{F} \right\}$.
\item If $\mathcal{F}$ and $\mathcal{G}$ are uniformly integrable subsets of $L^1(\Omega)$, then so is the set $\mathcal{F}+\mathcal{G}$ defined by  $\mathcal{F}+\mathcal{G} := \left\{ f+g\ : \ (f,g)\in\mathcal{F}\times \mathcal{G} \right\}$.
\end{itemize}

We next state the connection between the Dunford-Pettis theorem and the uniform integrability property.

\begin{proposition} \label{prb202}
Let $\mathcal{F}$ be a subset of $L^1(\Omega)$. 
The following two statements are equivalent:
\begin{itemize}
\item[\textbf{(i)}] $\mathcal{F}$ is uniformly integrable.
\item[\textbf{(ii)}] $\mathcal{F}$ is a bounded subset of $L^1(\Omega)$ 
such that $\eta\{\mathcal{F}\}=0$.
\end{itemize}
\end{proposition}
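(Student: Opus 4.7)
The plan is to prove the two implications $(i)\Rightarrow(ii)$ and $(ii)\Rightarrow(i)$ directly, with Chebyshev's inequality being the bridge between integrals over sets of small measure and integrals over level sets $\{|f|\ge c\}$.

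For $(i)\Rightarrow(ii)$, boundedness in $L^1(\Omega)$ is already part of Definition~\ref{deb200}, so only $\eta\{\mathcal{F}\}=0$ has to be established. I would fix $\varepsilon>0$ and use the uniform integrability hypothesis \eqref{b201} to pick $c=c(\varepsilon)$ such that $\sup_{f\in\mathcal{F}}\int_{\{|f|\ge c\}}|f|\,d\mu\le \varepsilon/2$. Then for any measurable $A\in\mathcal{B}$ and any $f\in\mathcal{F}$, splitting $A=(A\cap\{|f|<c\})\cup(A\cap\{|f|\ge c\})$ gives the estimate
\begin{equation*}
\int_A |f|\,d\mu \le c\,\mu(A) + \frac{\varepsilon}{2}.
\end{equation*}
Choosing $\mu(A)\le \varepsilon/(2c)$ yields $\eta\{\mathcal{F},\varepsilon/(2c)\}\le \varepsilon$, so $\eta\{\mathcal{F}\}\le \varepsilon$, and letting $\varepsilon\to 0$ gives \eqref{b104}.

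For $(ii)\Rightarrow(i)$, the role of Chebyshev's inequality is essential: setting $M:=\sup_{f\in\mathcal{F}}\|f\|_1<\infty$, for each $c>0$ and $f\in\mathcal{F}$,
\begin{equation*}
\mu(\{|f|\ge c\}) \le \frac{\|f\|_1}{c}\le \frac{M}{c}.
\end{equation*}
Given $\varepsilon>0$, the assumption $\eta\{\mathcal{F}\}=0$ supplies $\delta=\delta(\varepsilon)>0$ with $\eta\{\mathcal{F},\delta\}\le \varepsilon$. Taking $c\ge M/\delta$ then forces $\mu(\{|f|\ge c\})\le \delta$ uniformly in $f\in\mathcal{F}$, and applying the definition \eqref{b101} to $A=\{|f|\ge c\}$ gives
\begin{equation*}
\int_{\{|f|\ge c\}} |f|\,d\mu \le \eta\{\mathcal{F},\delta\}\le \varepsilon,
\end{equation*}
uniformly for $f\in\mathcal{F}$. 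This is precisely \eqref{b201}.

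Neither direction is technically hard; the only subtle point is to make sure both directions use the boundedness of $\mathcal{F}$ in $L^1(\Omega)$, which is built into both statements, and that one does not inadvertently need the $\sigma$-finite hypothesis on $\mu$ here (the proof is purely local, using only the finite measure of the chosen sublevel sets). I therefore expect no real obstacle; the main thing to get right is the order of quantifiers in the $(ii)\Rightarrow(i)$ direction, so that $\delta$ is chosen from $\varepsilon$ before $c$ is chosen from $\delta$ and the uniform $L^1$-bound $M$.
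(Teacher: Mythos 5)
Your proof is correct and uses essentially the same two ingredients as the paper: the level-set decomposition $A=(A\cap\{|f|<c\})\cup(A\cap\{|f|\ge c\})$ for (i)$\Rightarrow$(ii), and Chebyshev's inequality $\mu(\{|f|\ge c\})\le M/c$ for (ii)$\Rightarrow$(i). The only cosmetic difference is that the paper packages these two estimates as a proof of the stronger Lemma~\ref{leb203}, namely the exact identity $\eta\{\mathcal{F}\}=\lim_{c\to\infty}\sup_{f\in\mathcal{F}}\int_{\{|f|\ge c\}}|f|\,d\mu$, from which the proposition follows immediately, whereas you prove only the two ``zero iff zero'' implications needed for the proposition itself.
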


In other words, the uniform integrability property prevents the concentration on sets of arbitrary small measure. Proposition~\ref{prb202} is a straightforward consequence of the following result \cite{Sl85}.

\begin{lemma}\label{leb203}
Let $\mathcal{F}$ be a bounded subset of $L^1(\Omega)$. Then
\begin{equation}
\label{b204}
\eta\{\mathcal{F}\} = \lim_{c\to \infty}\ \sup_{f\in \mathcal{F}}\ \int_{\{ | f|\ge c \}} 
| f|\ d\mu\,.
\end{equation}
\end{lemma}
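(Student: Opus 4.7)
The plan is to establish both inequalities between $\eta\{\mathcal{F}\}$ and the limit
$$
\alpha_\infty := \lim_{c\to\infty}\ \sup_{f\in\mathcal{F}}\ \int_{\{|f|\ge c\}} |f|\, d\mu,
$$
noting first that the function $c \mapsto \sup_{f\in\mathcal{F}} \int_{\{|f|\ge c\}} |f|\, d\mu$ is non-increasing, so the limit exists in $[0,\infty)$ (it is bounded by $\sup_{f\in\mathcal{F}}\|f\|_1 < \infty$, which is finite by assumption).

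For the inequality $\alpha_\infty \le \eta\{\mathcal{F}\}$, I would use Markov's inequality. Setting $M := \sup_{f\in\mathcal{F}}\|f\|_1$, the set $A_f := \{|f|\ge c\}$ satisfies $\mu(A_f) \le M/c$, and therefore $\int_{A_f} |f|\, d\mu \le \eta\{\mathcal{F}, M/c\}$. Taking the supremum over $f\in\mathcal{F}$ gives $\sup_{f\in\mathcal{F}}\int_{\{|f|\ge c\}}|f|\,d\mu \le \eta\{\mathcal{F}, M/c\}$, and passing to the limit $c\to\infty$ (so $M/c\to 0$) yields the desired inequality by the definition \eqref{b102}.

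For the reverse inequality $\eta\{\mathcal{F}\} \le \alpha_\infty$, the key idea is the standard splitting. For any $A\in\mathcal{B}$ with $\mu(A)\le\varepsilon$, any $f\in\mathcal{F}$, and any $c>0$,
$$
\int_A |f|\, d\mu = \int_{A\cap\{|f|<c\}} |f|\, d\mu + \int_{A\cap\{|f|\ge c\}} |f|\, d\mu \le c\,\mu(A) + \int_{\{|f|\ge c\}} |f|\, d\mu \le c\varepsilon + \sup_{g\in\mathcal{F}}\int_{\{|g|\ge c\}}|g|\,d\mu.
$$
Taking the supremum over admissible $A$ and $f\in\mathcal{F}$ gives $\eta\{\mathcal{F},\varepsilon\}\le c\varepsilon + \sup_{g\in\mathcal{F}}\int_{\{|g|\ge c\}}|g|\,d\mu$. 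Sending $\varepsilon\to 0$ first (with $c$ fixed) yields $\eta\{\mathcal{F}\}\le \sup_{g\in\mathcal{F}}\int_{\{|g|\ge c\}}|g|\,d\mu$, and then sending $c\to\infty$ concludes.

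There is no serious obstacle here: the argument is elementary, with the only subtlety being the order of the two limits in the second step (one must send $\varepsilon\to 0$ before $c\to\infty$, since otherwise $c\varepsilon$ need not vanish). Boundedness of $\mathcal{F}$ in $L^1(\Omega)$ is crucial in the first step to control $\mu(\{|f|\ge c\})$ uniformly in $f$.
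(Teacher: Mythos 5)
Your proof is correct and follows essentially the same two-step argument as the paper: a Markov-inequality bound $\mu(\{|f|\ge c\})\le M/c$ to get $\alpha_\infty\le\eta\{\mathcal{F}\}$, and the splitting $\int_A|f| \le c\,\mu(A)+\int_{\{|f|\ge c\}}|f|$ (sending $\varepsilon\to 0$ before $c\to\infty$) to get the converse. The only difference is the order in which you present the two inequalities, which is immaterial.
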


\begin{proof} We put 
$$
\eta_\star := \lim_{c\to \infty}\ \sup_{f\in \mathcal{F}}\ \int_{\{ | f|\ge c \}} | f|\ d\mu = \inf_{c\ge 0}\ \sup_{f\in \mathcal{F}}\ \int_{\{ | f|\ge c \}} 
| f|\ d\mu\,.
$$ 

We first establish that $\eta\{\mathcal{F}\}\le \eta_\star$. To this end, consider $\varepsilon>0$, $A\in \mathcal{B}$, $g\in \mathcal{F}$, and $c\in (0,\infty)$. If $\mu(A)\le\varepsilon$, we have
\begin{eqnarray*}
\int_A | g|\ d\mu & = & \int_{A\cap \{ | g| < c\}} 
| g|\ d\mu + \int_{\{ A \cap | g| \ge c\}} 
| g|\ d\mu\\
& \le & c\ \mu(A) + \int_{\{ | g| \ge c\}} 
| g|\ d\mu\\
& \le & c\ \varepsilon + \sup_{f\in \mathcal{F}}\ \int_{\{ | f| \ge c\}}  | f|\ d\mu\,,
\end{eqnarray*}
whence
$$
\eta\{\mathcal{F},\varepsilon\} \le c\ \varepsilon + \sup_{f\in \mathcal{F}}\ \int_{\{ | f| \ge c\}} | f|\ d\mu\,.
$$
Passing to the limit as $\varepsilon\to 0$ leads us to 
$$
\eta\{\mathcal{F}\} \le \sup_{f\in \mathcal{F}}\ \int_{\{ | f| \ge c\}} | f|\ d\mu
$$
for all $c\in (0,\infty)$. Letting $c\to \infty$ readily gives the inequality $\eta\{\mathcal{F}\}\le \eta_\star$.

We now prove the converse inequality. For that purpose, we put
$$ 
\Lambda := \sup_{f\in \mathcal{F}} \{\| f\|_1\} <\infty \,,
$$
and observe that
$$
\mu\left({ \left\{{ x\in\Omega\ , \ | f(x)|\ge c }\right\} 
}\right) \le \frac{\Lambda}{c}
$$
for all $f\in \mathcal{F}$ and $c>0$. Consequently,
$$
\sup_{f\in \mathcal{F}}\ \mu\left({ \left\{{ x\in\Omega\ , \ | f(x)|\ge c }\right\} }\right) \le \frac{\Lambda}{c}\,,
$$
from which we deduce that
$$
\eta_\star \le \sup_{f\in \mathcal{F}}\ \int_{\{ | f|\ge c \}} 
| f|\ d\mu \le \eta\left\{ \mathcal{F},\frac{\Lambda}{c} \right\}\,.
$$
Since the right-hand side of the previous inequality converges towards $\eta\{\mathcal{F}\}$ as $c\to \infty$, we conclude that $\eta_\star\le 
\eta\{\mathcal{F}\}$ and complete the proof of the lemma. 
\end{proof}

Owing to Proposition~\ref{prb202}, the property $\eta\{\mathcal{F}\}=0$ can now be checked by studying the sets where the elements of $\mathcal{F}$ reach large values. This turns out to be more suitable in the field of partial differential equations as one can use the functions $r\longmapsto (r-c)_+:=\max{\{ 0, r-c \}}$. For instance, if $\Omega$ is an open set of $\mathbb{R}^N$ and $u$ is a function in the Sobolev space $W^{1,p}(\Omega)$ for some $p\in [1,\infty]$, then $(u-c)_+$ has the same regularity with $\nabla (u-c)_+ = \text{sign}((u-c)_+) \nabla u$. This allows one in particular to use $(u-c)_+$ as a test function in the weak formulation of nonlinear second order elliptic and parabolic equations and thereby obtain useful estimates. A broader choice of functions is actually possible as we will see in the next theorem.

\subsection{The de la Vall\'ee Poussin theorem}
\label{subsec:24}

\begin{theorem} \label{thb106}
Let $\mathcal{F}$ be a subset of $L^1(\Omega)$. The following two statements are equivalent:
\begin{itemize}
\item[\textbf{(i)}] $\mathcal{F}$ is uniformly integrable.
\item[\textbf{(ii)}] $\mathcal{F}$ is a bounded subset of $L^1(\Omega)$ and there exists a convex function $\Phi\in C^\infty([0,\infty))$ 
such that $\Phi(0) = \Phi'(0) = 0$, $\Phi'$ is a concave function, 
\begin{eqnarray}
\label{b107}
& & \Phi'(r)>0 \;\;\mbox{ if }\;\; r> 0,\\
\label{b108}
& & \lim_{r\to \infty} \frac{\Phi(r)}{r} = \lim_{r\to \infty} \Phi'(r) 
= \infty,
\end{eqnarray}
and
\begin{equation}
\label{b109}
\sup_{f\in \mathcal{F}} \int_\Omega \Phi\left({ | f| }\right)\ d\mu 
<\infty.
\end{equation}
\end{itemize}
\end{theorem}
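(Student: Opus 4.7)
The plan is to establish the two implications separately, with (ii) $\Rightarrow$ (i) following directly from the superlinear growth of $\Phi$, and (i) $\Rightarrow$ (ii) requiring an explicit construction of $\Phi$ from the uniform integrability modulus.

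\emph{Direction (ii) $\Rightarrow$ (i).} From $\Phi(r)/r \to \infty$, for every $\varepsilon > 0$ there is $c_\varepsilon > 0$ with $r \le \varepsilon\,\Phi(r)$ for $r \ge c_\varepsilon$. Then, for any $f \in \mathcal{F}$ and $c \ge c_\varepsilon$,
\[
\int_{\{|f| \ge c\}} |f|\, d\mu \le \varepsilon \int_\Omega \Phi(|f|)\, d\mu \le \varepsilon \sup_{g \in \mathcal{F}} \int_\Omega \Phi(|g|)\, d\mu,
\]
which is $O(\varepsilon)$ uniformly in $f$. Combined with the $L^1$-boundedness assumed in (ii), this yields \eqref{b201}.

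\emph{Direction (i) $\Rightarrow$ (ii).} Introduce the modulus $\psi(c) := \sup_{f \in \mathcal{F}} \int_{\{|f| > c\}} |f|\, d\mu$, which is nonincreasing and tends to $0$. Choose inductively an increasing sequence $0 = c_0 < c_1 < c_2 < \cdots \to \infty$ satisfying $\psi(c_n) \le 2^{-n}$ and $c_{n+1} - c_n \ge c_n - c_{n-1}$ for $n \ge 1$; both conditions are compatible because each $c_n$ may be taken as large as needed. Let $\varphi$ be the continuous piecewise linear function on $[0,\infty)$ with $\varphi(c_n) = n$. Then $\varphi$ is nondecreasing, concave (the successive slopes $1/(c_{n+1}-c_n)$ are nonincreasing), vanishes at $0$, is positive on $(0,\infty)$, and diverges at infinity. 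Setting $\Phi(r) := \int_0^r \varphi(s)\, ds$, we obtain a convex function with $\Phi(0) = \Phi'(0) = 0$, $\Phi' = \varphi$ enjoying the prescribed properties, and $\Phi(r)/r \to \infty$ since $\Phi'(r) \to \infty$.

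The uniform bound \eqref{b109} is then obtained via the layer-cake identity:
\[
\int_\Omega \Phi(|f|)\, d\mu = \int_0^\infty \varphi(s)\, \mu(\{|f| > s\})\, ds = \sum_{n=0}^\infty \int_{c_n}^{c_{n+1}} \varphi(s)\, \mu(\{|f| > s\})\, ds.
\]
On $[c_n, c_{n+1}]$ one has $\varphi(s) \le n+1$, and
\[
\int_{c_n}^{c_{n+1}} \mu(\{|f| > s\})\, ds \le \int_{\{|f| > c_n\}} (|f| - c_n)\, d\mu \le \psi(c_n) \le 2^{-n},
\]
so each summand is bounded by $(n+1) 2^{-n}$, and the series converges independently of $f \in \mathcal{F}$. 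The $C^\infty$ requirement is met afterwards by replacing $\varphi$ with a smooth concave nondecreasing approximation obtained by patching smooth interpolants on each $[c_n, c_{n+1}]$ with matched values and derivatives, which preserves all of the above estimates.

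\emph{Main obstacle.} The delicate step is the simultaneous calibration of $(c_n)$: the gaps must grow (for concavity of $\Phi'$), $\psi(c_n)$ must decay fast enough (for summability), and $\Phi'$ must still tend to infinity. These requirements are reconciled precisely because uniform integrability lets us enlarge each $c_n$ freely, removing any genuine tension. The $C^\infty$ smoothing is standard and presents no conceptual difficulty.
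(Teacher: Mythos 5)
Your construction of the piecewise-linear $\Phi'$ is correct and is essentially the paper's argument in disguise: choosing $\varphi(c_n)=n$ with $\psi(c_n)\le 2^{-n}$ corresponds exactly to the paper's sequences $(\alpha_m,\beta_m)$ with $\alpha_m\equiv 1$, $\beta_m=2^{-m}$ (cf.~Remark~\ref{reb118}), and your layer-cake computation of $\int_\Omega\Phi(|f|)\,d\mu$ is an integral reformulation of the discrete summation in Lemma~\ref{leb110}. The direction (ii)$\Rightarrow$(i) is also the same. So the blueprint coincides with the paper's, and the $L^1$-bound comes out cleanly.

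The genuine gap is the final $C^\infty$ smoothing, which you dispatch in one sentence but which is the longest and most delicate part of the paper's proof. Two issues. First, ``smooth interpolants on each $[c_n,c_{n+1}]$ with matched values and derivatives'' (Hermite-type patches) need not be concave, and a non-concave $\varphi$ destroys the property that $\Phi'$ is concave, which is precisely the extra feature that distinguishes this version of de la Vall\'ee Poussin and is used later (Proposition~\ref{prb121}). Second, even with a correct concave smoothing, one must check that the bound \eqref{b109} survives the replacement of $\Phi$ by its smoothed version: it is not automatic that $\sup_f\int_\Omega\Psi(|f|)\,d\mu<\infty$ if $\Psi$ is not pointwise controlled by $\Phi$. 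The paper resolves both points at once by extending $\Phi$ to $\mathbb{R}$, mollifying ($\Psi=\vartheta*\Phi_1$ minus an affine correction), verifying directly that convolution with a non-negative kernel preserves convexity of $\Phi_1$ and concavity of $\Phi_1'$, and then proving the growth comparison $\Psi(r)\le C\,(r+\Phi(r))$ in \eqref{b117}; it is this last estimate, which relies on the concavity of $\Phi'$ via \eqref{b124}, that transfers the moment bound from $\Phi$ to the smooth $\Psi$. If you want your proof to stand alone you should either reproduce this mollification argument or replace your ``patching'' step with an explicit construction (e.g.~rounding each corner of $\varphi$ from below by a concave arc, then checking the new $\varphi$ is $\le$ the old one so that the new $\Phi$ is $\le$ the old $\Phi$ and \eqref{b109} is inherited).
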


When $\mathcal{F}$ is a sequence of integrable functions $(f_n)_{n\ge 1}$, Theorem~\ref{thb106} is established by de la Vall\'ee Poussin \cite[p.~451--452]{VP15} (without the concavity of $\Phi'$ and the regularity of $\Phi$) and is stated as follows: a sequence $(f_n)_{n\ge 1}$ is uniformly integrable (in the sense that the subset $\{ f_n \ : \ n\ge 1\}$ of $L^1(\Omega)$ is uniformly integrable) if and only if there is a non-decreasing function $\varphi~: [0,\infty)\to [0,\infty)$ such that $\varphi(r)\to \infty$ as $r\to \infty$ and 
$$
\sup_{n\ge 1} \int_\Omega \varphi(| f_n|)\ | f_n|\ d\mu < \infty\,.
$$
This result clearly implies Theorem~\ref{thb106}. Indeed, if $\Phi$ denotes the primitive of $\varphi$ satisfying $\Phi(0)=0$, the function $\Phi$ is clearly convex and the convexity inequality $\Phi(r)\le r\ \varphi(r)$ ensures that $(\Phi(| f_n|))_{n\ge 1}$ is bounded in  $L^1(\Omega)$. When $\mu(\Omega)<\infty$, a proof of Theorem~\ref{thb106} may also be found in \cite{DM75} and \cite[Theorem~I.1.2]{RR91} but the first derivative of $\Phi$ is not necessarily concave. As we shall see in the examples below, the possibility of choosing $\Phi'$ concave turns out to be helpful. The version of the de la Vall\'ee Poussin theorem stated in Theorem~\ref{thb106} is actually established in \cite{Le77}. The proof given below is slightly different from those given in the above mentioned references and relies on the following lemma: 

\begin{lemma} \label{leb110}
Let $\Phi\in C^1([0,\infty))$ be a non-negative and convex function with $\Phi(0)=\Phi'(0)=0$ and consider a non-decreasing sequence of integers $(n_k)_{k\ge 0}$ such that $n_0=1$, $n_1\ge 2$, and $n_k\to \infty$ as $k\to\infty$. Given $f\in L^1(\Omega)$ and $k\ge 1$, we have the following inequality:
\begin{eqnarray}
\int_{\{ | f|<n_k\}}  \Phi(| f|)\ d\mu & \le & 
\Phi'(1)\ \int_\Omega | f|\ d\mu \nonumber\\
\label{b111}
& & + \sum_{j=0}^{k-1} \left({ \Phi'(n_{j+1}) - \Phi'(n_j) }\right)\ 
\int_{\{ | f|\ge n_j \}} | f|\ d\mu.
\end{eqnarray}
\end{lemma}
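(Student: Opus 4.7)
The plan is to establish a pointwise inequality which, after integration, yields \eqref{b111}. Specifically, I aim to show that for every $x \in \Omega$,
\begin{equation*}
\Phi(|f(x)|)\,\mathbf{1}_{\{|f|<n_k\}}(x) \le \Phi'(1)\,|f(x)| + \sum_{j=0}^{k-1}\bigl(\Phi'(n_{j+1}) - \Phi'(n_j)\bigr)\,|f(x)|\,\mathbf{1}_{\{|f|\ge n_j\}}(x).
\end{equation*}
Integrating this over $\Omega$ produces \eqref{b111} directly, since $\int_\Omega |f|\,\mathbf{1}_{\{|f|\ge n_j\}}\,d\mu = \int_{\{|f|\ge n_j\}}|f|\,d\mu$ and the term in $\Phi'(1)|f|$ contributes $\Phi'(1)\|f\|_1$.

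To prove the pointwise inequality, I would split according to which dyadic-like level $|f(x)|$ falls into. On the set $\{|f|<1\} = \{|f|<n_0\}$, convexity of $\Phi$ with $\Phi(0)=0$ and monotonicity of $\Phi'$ give
\begin{equation*}
\Phi(|f|) = \int_0^{|f|}\Phi'(s)\,ds \le \Phi'(1)\,|f|,
\end{equation*}
which matches the first term of the right-hand side (the indicator sum vanishes there since $|f|<1=n_0$). On the set $\{n_j \le |f| < n_{j+1}\}$ for some $j \in \{0,\dots,k-1\}$, the key observation is that the indicator sum telescopes: only the terms with $i \le j$ survive, and
\begin{equation*}
\sum_{i=0}^{j}\bigl(\Phi'(n_{i+1}) - \Phi'(n_i)\bigr) = \Phi'(n_{j+1}) - \Phi'(n_0) = \Phi'(n_{j+1}) - \Phi'(1),
\end{equation*}
so the right-hand side collapses to $\Phi'(n_{j+1})\,|f|$. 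The required inequality $\Phi(|f|) \le \Phi'(n_{j+1})|f|$ then follows from the monotonicity of $\Phi'$: since $|f| \le n_{j+1}$, one has $\Phi(|f|) = \int_0^{|f|}\Phi'(s)\,ds \le |f|\,\Phi'(n_{j+1})$.

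There is no real obstacle here; the proof is essentially bookkeeping. The only conceptual step is guessing the right form of the bound, namely recognizing that the coefficients $\Phi'(n_{j+1})-\Phi'(n_j)$ arise from an Abel summation (or telescoping) of the obvious estimate $\Phi(r) \le r\Phi'(n_{j+1})$ on the level set $\{n_j\le|f|<n_{j+1}\}$. Hypotheses that $\Phi'$ be concave or that $\Phi \in C^\infty$ play no role in this lemma; only the convexity of $\Phi$ together with $\Phi(0)=\Phi'(0)=0$ is used, through the monotonicity of $\Phi'$ and the elementary estimate $\Phi(r)\le r\Phi'(R)$ whenever $0\le r\le R$.
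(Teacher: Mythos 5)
Your proof is correct and is essentially the paper's argument repackaged: the paper splits the integral over the level sets $\{0\le|f|<1\}$ and $\{n_j\le|f|<n_{j+1}\}$, bounds $\Phi(|f|)\le \Phi'(|f|)\,|f|\le \Phi'(n_{j+1})\,|f|$ on each, and then performs an Abel summation on the resulting sums of integrals, which is precisely the telescoping you do at the pointwise level before integrating. One small remark: you note that concavity of $\Phi'$ and smoothness are not used, but these are not hypotheses of this lemma to begin with (the lemma only asks $\Phi\in C^1$ convex with $\Phi(0)=\Phi'(0)=0$); they enter elsewhere, in the construction carried out in the proof of Theorem~\ref{thb106}.
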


\begin{proof} As $\Phi$ is convex with $\Phi'(0)=0$, $\Phi'$ is non-negative and non-decreasing and 
$$
\Phi(r) \le r\ \Phi'(r), \;\;\; r\in [0,\infty)\,.
$$
Fix $k\ge 1$. We infer from the properties of $\Phi$ that
\begin{eqnarray*}
\int_{\{ |f|<n_k\}} \Phi(|f|)\ d\mu & \le & \int_{\{ |f|<n_k\}} \Phi'(|f|)\ |f|\ d\mu \\
& = & \int_{\{ 0 \le |f| < 1 \}} \Phi'(|f|)\ |f|\ d\mu + \sum_{j=0}^{k-1} \int_{\{ n_j\le |f| < n_{j+1} \}}  \Phi'(|f|)\ |f|\ d\mu\\
& \le & \Phi'(1)\ \int_{\{ 0\le |f| < 1 \}} |f|\ d\mu 
+ \sum_{j=0}^{k-1} \Phi'(n_{j+1})\ \int_{\{ n_j \le |f| < n_{j+1} \}} 
|f|\ d\mu\\
& = & \Phi'(1)\ \int_{\{ 0\le |f| < 1 \}} |f|\ d\mu  
+ \sum_{j=0}^{k-1} \Phi'(n_{j+1})\ \int_{\{ |f|\ge n_j \}} 
|f|\ d\mu \\
& &  - \sum_{j=1}^{k} \Phi'(n_j)\ \int_{\{ |f|\ge n_j \}} |f|\ d\mu\\
& \le & \Phi'(1)\ \int_\Omega |f|\ d\mu \\ 
& & +\ \sum_{j=0}^{k-1} \left({ \Phi'(n_{j+1}) - \Phi'(n_j) }\right)\ 
\int_{\{ |f|\ge n_j \}} |f|\ d\mu\,,
\end{eqnarray*}
whence \eqref{b111}. 
\end{proof}

The inequality \eqref{b111} gives some clue towards the construction of a function $\Phi$ fulfilling the requirements of Theorem~\ref{thb106}. Indeed, it clearly follows from \eqref{b111} that, in order to estimate the norm of $\Phi(f)$ in $L^1(\Omega)$ uniformly with respect to $f\in \mathcal{F}$, it is sufficient to show that one can find a function $\Phi$ and a sequence $(n_k)_{k\ge 0}$ such that the sum in the right-hand side of \eqref{b111} is bounded independently of $f\in \mathcal{F}$ and $k\ge 1$. Observing that this sum is bounded from above by the series
\begin{equation}
\label{b112}
\sum_{j=0}^\infty \left({ \Phi'(n_{j+1}) - \Phi'(n_j) }\right)\ X_j
\end{equation}
with
$$
X_j := \sup_{f\in \mathcal{F}} \int_{\{ |f|\ge n_j \}} |f|\ d\mu\,, \qquad j\ge 0\,,
$$
and that $X_j\to 0$ as $j\to\infty$ by \eqref{b201}, the proof of Theorem~\ref{thb106} amounts to showing that one can find $\Phi$ and $(n_k)_{k\ge 0}$ such that the series \eqref{b112} converges.

\begin{proof}[Proof of Theorem~\ref{thb106}]
-- (i) $\Longrightarrow$ (ii). Consider two sequences of positive real numbers $(\alpha_m)_{m\ge 0}$ and $(\beta_m)_{m\ge 0}$ satisfying\begin{equation}
\label{b113}
\sum_{m=0}^\infty \alpha_m = \infty \;\;\mbox{ and }\;\; \sum_{m=0}^\infty \alpha_m \beta_m < \infty.
\end{equation}
It then follows from \eqref{b201} that there exists a non-decreasing sequence of integers $(N_m)_m\ge 0$ such that $N_0=1$, 
$N_1\ge 2$ and
\begin{eqnarray}
\label{b114}
N_{m+1}\ge \left({ 1 + \frac{\alpha_m}{\alpha_{m-1}} }\right)\ N_m, 
\;\;\; m\ge 1\,,,\\
\label{b115}
\sup_{f\in \mathcal{F}} \int_{\{ |f|\ge N_m \}} 
|f|\ d\mu \le \beta_m, \;\;\; m\ge 1\,.
\end{eqnarray}

Let us now construct the function $\Phi$ and first look for a $C^1$-smooth function which is piecewise quadratic on each  interval $[N_m,N_{m+1}]$. More precisely, we assume that, for each $m\ge 0$,
$$
\Phi'(r) = A_m\ r + B_m, \;\;\; r\in [N_m,N_{m+1}]\,, 
$$
the real numbers $A_m$ and $B_m$ being yet to be determined. In order that such a function $\Phi$ fulfills the requirements of Theorem~\ref{thb106}, $A_m$ and $B_m$ should enjoy the following properties:

\begin{itemize}
\item[\textbf{(c1)}] $(A_m)_{m\ge 0}$ is a non-increasing sequence of positive real numbers, which implies the convexity of $\Phi$ and the concavity of $\Phi'$,
\item[\textbf{(c2)}] $A_{m+1}\ N_{m+1} + B_{m+1} = A_m\ N_{m+1} + B_m$, $m\ge 0$, which ensures the continuity of $\Phi'$,
\item[\textbf{(c3)}] $A_m\ N_m + B_m \to \infty$ as $m\to\infty$, so that  \eqref{b108} is satisfied,
\item[\textbf{(c4)}] the series $\sum A_m\ \left({ N_{m+1} - N_m }\right)\ \beta_m$ converges, which, together with \eqref{b115}, ensures that the right-hand side of \eqref{b111} is bounded uniformly with respect to  $f\in \mathcal{F}$.
\end{itemize}

Let us now prove that the previously constructed sequence $(N_m)_{m\ge 0}$ allows us to find $(A_m,B_m)$ complying with the four constraints (c1)-(c4). According to \eqref{b113} and (c4), a natural choice for $A_m$ is 
$$
A_m := \frac{\alpha_m}{N_{m+1} - N_m}, \;\;\;  m\ge 0\,.
$$
The positivity of $(\alpha_m)$ and \eqref{b114} then ensure that the sequence $(A_m)_{m\ge 0}$ satisfies (c1). Next, (c2) also reads 
$$
A_{m+1}\ N_{m+1} + B_{m+1} = A_m\ N_m + B_m + \alpha_m\,, 
$$
from which we deduce that
$$
A_m\ N_m + B_m = \sum_{i=0}^{m-1} \alpha_i + A_0\ N_0 + B_0\,.
$$
The above identity allows us to determine the sequence $(B_m)_{m\ge 0}$ by
$$
B_0:=0 \;\;\mbox{ and }\;\; B_m := \sum_{i=0}^{m-1} \alpha_i + A_0\ N_0 - A_m\ N_m\,, \quad m\ge 1\,,
$$
and (c3) is a straightforward consequence of \eqref{b113}.

We are now in a position to complete the definition of $\Phi$. We set 
$$
\Phi'(r) := \left\{{
\begin{array}{cll}
\displaystyle{\frac{\alpha_0}{N_1-N_0}\ r} & \mbox{ for } & r\in [0,N_1)\,,\\
 & & \\
\displaystyle{\frac{\alpha_m\ \left({ r-N_m }\right)}{N_{m+1}-N_m} + 
  \sum_{i=0}^{m-1} \alpha_i + \frac{\alpha_0}{N_1-N_0}} & \mbox{ for } & r\in [N_m,N_{m+1})\,,\\
 & & \;m\ge 1,
\end{array}
}\right.$$
and 
$$
\Phi(r) := \int_0^r \Phi'(s)\ ds, \;\;\; r\in [0,\infty)\,.
$$

Clearly $\Phi(0)=\Phi'(0)=0$ and, for $m\ge 1$,
\begin{equation}
\label{b116}
\lim_{r\to N_m -} \Phi'(r) = \Phi'(N_m) = \sum_{i=0}^{m-1} \alpha_i 
+ \frac{\alpha_0}{N_1-N_0}.
\end{equation}
Consequently, $\Phi'\in C([0,\infty))$ and thus
$\Phi\in C^1([0,\infty))$. Moreover, $\Phi'$ is differentiable in $(0,N_1)$ and in each open interval $(N_m,N_{m+1})$ with
$$\Phi''(r) = \left\{{
\begin{array}{cll}
\displaystyle{\frac{\alpha_0}{N_1-N_0}} & \mbox{ for } & r\in (0,N_1)\,,\\
 & & \\
\displaystyle{\frac{\alpha_m}{N_{m+1}-N_m}} & \mbox{ for } & 
r\in (N_m,N_{m+1}), \;\;\; m\ge 1\,,
\end{array}
}\right.$$
and \eqref{b114} ensures that $\Phi''$ is non-negative and non-increasing, whence the convexity of $\Phi$ and the concavity of $\Phi'$. We then deduce from the monotonicity of $\Phi'$, \eqref{b113}, and \eqref{b116} that $\Phi'$ fulfills \eqref{b107} and
$$
\lim_{r\to \infty} \Phi'(r) = \infty\,.
$$ 
The property \eqref{b108} then follows by the L'Hospital rule.

We finally infer from \eqref{b111}, \eqref{b115}, and \eqref{b116} that, 
for $f\in \mathcal{F}$ and $m\ge 1$, we have
\begin{eqnarray*}
\int_{\{ |f|<N_m\}}  \Phi(|f|)\ d\mu & \le &
\Phi'(1)\ \int_\Omega |f|\ d\mu 
+ \sum_{j=0}^{m-1} \alpha_j\ \beta_j\\
& \le & \Phi'(1)\ \sup_{g\in \mathcal{F}} \int_\Omega |g|\ d\mu 
+ \sum_{j=0}^\infty \alpha_j\ \beta_j,
\end{eqnarray*}
and the right-hand side of the above inequality is finite by \eqref{b113} and the boundedness of $\mathcal{F}$ in $L^1(\Omega)$. We let $m\to \infty$ in the above inequality and conclude that $\left\{ \Phi(|f|)\ : \ f\in \mathcal{F} \right\}$ is bounded in $L^1(\Omega)$.

\medskip

We next modify the function $\Phi$ constructed above in order to improve its regularity. To this end, we define $\Phi_1\in C^1(\mathbb{R})$ by 
$$
\Phi_1(r) := \Phi(r) \;\;\mbox{ for }\;\; r\ge 0 \;\;\mbox{ and }\;\; \Phi_1(r)=\Phi''(0)\ \frac{r^2}{2} \;\;\mbox{ for }\;\; r\le 0\,.
$$
As $\Phi_1'(r)\le 0=\Phi_1'(0)$ for $r\le 0$, $\Phi_1'$ is non-decreasing so that $\Phi_1$ is convex. Similarly, $\Phi_1''(r)= \Phi''(0)$ for $r\le 0$, which guarantees that $\Phi_1''$ is non-increasing and thus the concavity of $\Phi_1'$. 

Consider next $\vartheta\in C_0^\infty(\mathbb{R})$ such that
$$
\vartheta\ge 0\,, \;\;\; \mbox{ supp }\vartheta = (-1,1)\,, \;\;\; \int_{\mathbb{R}} \vartheta(r)\ dr = 1\,.
$$ 
We define a function $\Psi$ by 
$$
\Psi(r) := (\vartheta * \Phi_1)(r) - (\vartheta * \Phi_1)(0) - (\vartheta * \Phi_1')(0)\ r\,, \;\;\; r\in \mathbb{R}\,.
$$
Clearly, $\Psi\in C^\infty(\mathbb{R})$ satisfies $\Psi(0)=\Psi'(0)=0$. Next, thanks to the non-negativity of $\vartheta$, the convexity of $\Phi_1$ and the concavity of $\Phi_1'$ imply the convexity of $\Psi$ and the concavity of $\Psi'$. Moreover, we have $\Psi'(r)>0$ for $r>0$. Indeed, assume for contradiction that $\Psi'(r_0)=0$ for some $r_0>0$. Then
$$
0 = \Psi'(r_0) = \int_{\mathbb{R}} \vartheta(s)\ \left( \Phi_1'(r_0-s) - \Phi_1'(-s) \right)\ ds\,,
$$
from which we infer that $ \vartheta(s)\ \left( \Phi_1'(r_0-s) - \Phi_1'(-s) \right) = 0$ for $s\in\mathbb{R}$ by the non-negativity of $\vartheta$ and the monotonicity of $\Phi_1'$. Taking $s=0$, we conclude that $0=\Phi_1'(r_0)=\Phi'(r_0)$, and a contradiction. Consequently, $\Psi$ fulfills \eqref{b107}. 
 
We next check that $\Psi$ is superlinear at infinity. To this end, we consider $r\ge 2$ and deduce from the monotonicity of $\Phi_1'$  and \eqref{b108} that
\begin{eqnarray*}
\Psi'(r) & = & \int_{-1}^1 \vartheta(s)\ \Phi_1'(r-s)\ ds - (\vartheta * \Phi_1')(0) \\
& \ge & \Phi_1'(r-1)\ \int_{-1}^1 \vartheta(s)\ ds - (\vartheta * \Phi_1')(0) \\
& \ge & \Phi'(r-1)\ - (\vartheta * \Phi_1')(0) \mathop{\longrightarrow}_{r\to \infty} \infty\,,
\end{eqnarray*}
and we use again the L'Hospital rule to conclude that $\Psi$ fulfills \eqref{b108}. 

Let us finally show that there is a constant $C>0$ such that
\begin{equation}
\label{b117}
\Psi(r) \le C\ (r + \Phi(r))\,, \;\;\; r\ge 0\,.
\end{equation}
Indeed, either $r> 1$ and $r-s\ge 0$ for all $s\in (-1,1)$ or $r\in [0,1]$. In the former case, as $\Phi_1$ is non-decreasing in $[0,\infty)$ and non-negative in $\mathbb{R}$, we have  
\begin{eqnarray*} 
\Psi(r) & \le & \int_{-1}^1 \vartheta(s)\ \Phi_1(r+1)\ ds - r\ \int_{-1}^1 \vartheta(s)\ \Phi_1'(-s)\ ds \\
& \le & \Phi(r+1) + \sup_{[-1,1]}{\{|\Phi_1'|\}}\ r\,.
\end{eqnarray*}
On the other hand, the concavity of $\Phi'$, the convexity of $\Phi$, and the property $\Phi(0)=\Phi'(0)=0$ entail that
\begin{align*}
\Phi(r) & = \int_0^r \Phi'\left( \frac{s}{r+1} (r+1) \right)\ ds \ge \int_0^r \frac{s}{r+1} \Phi'(r+1)\ ds \\
& = \frac{r^2}{2(r+1)^2} (r+1) \Phi'(r+1) \ge \frac{\Phi(r+1)}{4} \ .
\end{align*}
Combining the previous two estimates gives \eqref{b117} for $r\ge 1$. When $r\in [0,1]$, the convexity of $\Phi_1$ and the concavity of $\Phi_1'$ ensure that 
\begin{eqnarray*}
\Psi(r) & \le & \int_{-1}^1 \vartheta(s)\ (r-s)\ \Phi_1'(r-s)\ ds - r\ \int_{-1}^1 \vartheta(s)\ \Phi_1'(-s)\ ds \\
& \le & r\ \int_{-1}^1 \vartheta(s)\ \left( \Phi_1'(r-s) - \Phi_1'(-s) \right)\ ds \\
& \le & r\ \int_{-1}^1 \vartheta(s)\ r\ \Phi_1''(-s)\ ds \le r\ \Phi''(0)\,,
\end{eqnarray*}
whence \eqref{b117}.

Now, since $\mathcal{F}$ and $\left\{ \Phi(|f|)\ : \ f\in \mathcal{F} \right\}$ are two bounded subsets of $L^1(\Omega)$, the boundedness of $\left\{ \Psi(|f|)\ : \ f\in \mathcal{F} \right\}$ in $L^1(\Omega)$ readily follows from \eqref{b117}, which completes the proof of (i) $\Longrightarrow$ (ii) in Theorem~\ref{thb106}. 

\medskip

-- (ii) $\Longrightarrow$ (i). Let $c\in (0,\infty)$. Owing to the convexity of $\Phi$, the function $r\mapsto \Phi(r)/r$ is non-decreasing and 
\begin{eqnarray*}
\sup_{f\in \mathcal{F}} \int_{\{ |f| \ge c\}} |f|\
d\mu & = & \sup_{f\in \mathcal{F}} \int_{\{ |f| \ge c\}} 
\frac{|f|}{\Phi(|f|)}\ \Phi(|f|)\ d\mu\\
& \le & \frac{c}{\Phi(c)}\ \sup_{f\in \mathcal{F}}\ 
\int_\Omega \Phi(|f|)\ d\mu\,.
\end{eqnarray*}
It then follows from \eqref{b108} that
$$
\lim_{c\to \infty}\ \sup_{f\in \mathcal{F}}\ \int_{\{ |f|
\ge c\}} |f|\ d\mu = 0\,,
$$
whence \eqref{b201}. 
\end{proof}

\begin{remark}\label{reb118}
Notice that the sequences $(\alpha_m)$ and $(\beta_m)$ used in the proof of Theorem~\ref{thb106} can be \textit{a priori} chosen
arbitrarily provided they fulfill the condition \eqref{b113}. In particular, with the choice $\alpha_m=1$, the above construction of the function $\Phi$ is similar to that performed in \cite{Le77}.
\end{remark}

For further use, we introduce the following notation:

\begin{definition} \label{defcvp}
We define $\mathcal{C}_{VP}$ as the set of convex functions $\Phi\in C^\infty([0,\infty))$ with $\Phi(0) = \Phi'(0) = 0$ and such that $\Phi'$ is a concave function satisfying (\ref{b107}). The set $\mathcal{C}_{VP,\infty}$ denotes the subset of functions in $\mathcal{C}_{VP}$ satisfying the additional property \eqref{b108}. 
\end{definition}

A first consequence of Theorem~\ref{thb106} is that every function in $L^1(\Omega)$ enjoys an additional integrability property in the following sense. 
 
\begin{corollary} \label{cob119}
Let $f\in L^1(\Omega)$. Then there is a function 
$\Phi\in \mathcal{C}_{VP,\infty}$ such that $\Phi(|f|)\in L^1(\Omega)$.
\end{corollary}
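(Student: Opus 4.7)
The plan is to apply the implication (i) $\Longrightarrow$ (ii) of Theorem~\ref{thb106} to the singleton $\mathcal{F} := \{f\}$. This reduces the whole statement to checking that a single function in $L^1(\Omega)$ is automatically uniformly integrable in the sense of Definition~\ref{deb200}, after which the conclusion is immediate.

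To establish uniform integrability of $\{f\}$, first note that $\|f\|_1 < \infty$, so the set is bounded in $L^1(\Omega)$. To verify \eqref{b201}, I would set $g_c := |f|\, \mathbf{1}_{\{|f| \ge c\}}$ for $c > 0$ and observe that $0 \le g_c \le |f|$, with $g_c(x) \to 0$ as $c \to \infty$ for $\mu$-a.e.\ $x \in \Omega$ (since the integrability of $f$ forces $\mu(\{|f| = \infty\}) = 0$). Lebesgue's dominated convergence theorem then yields
\[
\lim_{c \to \infty} \int_{\{|f| \ge c\}} |f|\, d\mu = \lim_{c\to\infty} \int_\Omega g_c\, d\mu = 0,
\]
which is precisely \eqref{b201} for the singleton $\{f\}$.

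With uniform integrability in hand, Theorem~\ref{thb106} produces a convex function $\Phi \in C^\infty([0,\infty))$ satisfying $\Phi(0) = \Phi'(0) = 0$, with $\Phi'$ concave and satisfying \eqref{b107}--\eqref{b108}, and such that $\int_\Omega \Phi(|f|)\, d\mu < \infty$. By Definition~\ref{defcvp}, this $\Phi$ belongs to $\mathcal{C}_{VP,\infty}$, and the finiteness of the integral means $\Phi(|f|) \in L^1(\Omega)$, as required.

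There is no real obstacle in this argument — the corollary is essentially the observation that every $L^1$-function is a uniformly integrable family of size one, so the de la Vallée Poussin construction applies verbatim.
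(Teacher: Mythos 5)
Your argument is correct and follows exactly the paper's route: take $\mathcal{F}=\{f\}$ and apply (i) $\Longrightarrow$ (ii) of Theorem~\ref{thb106}. The paper simply asserts that the singleton satisfies assertion~(i) with a ``Clearly'', whereas you spell out the dominated-convergence justification; the underlying proof is the same.
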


\begin{proof} Clearly $\mathcal{F} = \{ f \}$ fulfills the  
assertion~(i) of Theorem~\ref{thb106}. 
\end{proof}

\begin{remark}\label{reb120}
If $\mu(\Omega)<\infty$, we have 
$$
\bigcup_{p>1} L^p(\Omega) \subset L^1(\Omega)\,,
$$
but this inclusion cannot be improved to an equality in general. For instance, the function $f : x\longmapsto x^{-1}\ (\ln{x})^{-2}$ belongs to $L^1(0,1/2)$ but $f\not\in L^p(0,1/2)$ as soon as $p>1$. A consequence of Corollary~\ref{cob119} is that $L^1(\Omega)$ is the union of the Orlicz spaces $L_\Phi$, see \cite{RR91} for instance.  
\end{remark}

Let us mention here that Corollary~\ref{cob119} is also established in \cite[p.~60--61]{KR61}, \cite[Proposition~A1]{MW99} and \cite{Ri96}, still without the requirement that $\Phi$ has a concave first derivative. However, the convex function $\Phi$ constructed in \cite[Proposition~A1]{MW99} and \cite{Ri96} enjoys the properties \eqref{b122} and \eqref{b124} stated below, respectively. In fact, it follows clearly from the proof of Theorem~\ref{thb106} that there is some freedom in the construction of the function $\Phi$ and this fact has allowed some authors to endow it with additional properties according to their purpose. In particular, the concavity of $\Phi'$ is useful to establish the existence of weak solutions to reaction-diffusion systems \cite{Pi87}, while the property \eqref{b122} is used to study the spatially homogeneous Boltzmann equation \cite{MW99} and the property \eqref{b124} to show the existence of solutions to the spatially inhomogeneous BGK equation \cite{Ri96}. The possibility of choosing $\Phi'$ concave is also useful in the proof of the existence of solutions to the continuous coagulation-fragmentation equation as we shall see in Section~\ref{sec:3}. We now check that all these properties are actually a consequence of the concavity of $\Phi'$.

\begin{proposition}\label{prb121}
Consider $\Phi\in \mathcal{C}_{VP}$. Then 
\begin{equation}
\label{b122}
r\mapsto \frac{\Phi(r)}{r} \;\mbox{ is concave in }\; (0,\infty)\,,
\end{equation}
\begin{equation}
\label{b123}
\Phi(r) \le r\ \Phi'(r) \le 2\ \Phi(r)\,,
\end{equation}
\begin{equation}
\label{b123b}
s \ \Phi'(r) \le \Phi(r) + \Phi(s)\,,
\end{equation}
\begin{equation}
\label{b124}
\Phi(\lambda r) \le \max{\{ 1 , \lambda^2 \}}\ \Phi(r)\,,
\end{equation}
\begin{equation}
\label{b125}
(r+s)\ \left( \Phi(r+s) - \Phi(r) - \Phi(s) \right) \le 2\ \left( r\
  \Phi(s) + s\ \Phi(r) \right)\,, 
\end{equation}
for $r\ge 0$, $s\ge 0$, and $\lambda\ge 0$.
\end{proposition}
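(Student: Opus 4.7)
The plan is to prove the five inequalities in the order in which they are stated, each one following either directly from the convexity/concavity hypotheses on $\Phi$ and $\Phi'$ together with $\Phi(0) = \Phi'(0) = 0$, or from those already established.

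For \eqref{b122} I would use the integral representation
\[
\frac{\Phi(r)}{r} = \int_0^1 \Phi'(ru)\, du, \qquad r > 0,
\]
which is valid because $\Phi(0) = 0$. For each fixed $u \in [0,1]$, $r \mapsto \Phi'(ru)$ is concave as the composition of the concave function $\Phi'$ with a linear map, and integration preserves concavity. For the left inequality in \eqref{b123}, the convexity of $\Phi$ at the base point $r$ applied at $0$ gives $0 = \Phi(0) \ge \Phi(r) - r\Phi'(r)$. For the right inequality, I exploit the concavity of $\Phi'$ together with $\Phi'(0) = 0$: for $s \in [0,r]$ one has $\Phi'(s) \ge (s/r)\Phi'(r)$, and integrating from $0$ to $r$ yields $\Phi(r) \ge (r/2)\Phi'(r)$.

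The inequality \eqref{b123b} then follows immediately from the convexity of $\Phi$ combined with \eqref{b123}: convexity at $r$ gives $\Phi(s) \ge \Phi(r) + \Phi'(r)(s-r)$, so $s\Phi'(r) \le \Phi(s) - \Phi(r) + r\Phi'(r) \le \Phi(s) + \Phi(r)$ by the right-hand inequality of \eqref{b123}. For \eqref{b124} I would split according to whether $\lambda \le 1$ or $\lambda \ge 1$. In the first case $\Phi(\lambda r) \le \Phi(r)$ since $\Phi' \ge \Phi'(0) = 0$ (convexity of $\Phi$ together with \eqref{b107}) makes $\Phi$ non-decreasing. In the second case I invoke \eqref{b122}: the function $g(r) := \Phi(r)/r$ extends continuously to $[0,\infty)$ with $g(0) = \Phi'(0) = 0$, and concavity combined with $g(0) = 0$ yields the sublinearity relation $g(\lambda r) \le \lambda g(r)$ for $\lambda \ge 1$ (write $r$ as the convex combination $(1/\lambda)(\lambda r) + (1 - 1/\lambda)\cdot 0$), which rearranges to $\Phi(\lambda r) \le \lambda^2 \Phi(r)$.

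The only step with any subtlety is \eqref{b125}. My starting point is the identity
\[
\Phi(r+s) - \Phi(r) - \Phi(s) = \int_0^s \bigl[\Phi'(r+v) - \Phi'(v)\bigr]\, dv,
\]
obtained by writing $\Phi(r+s) - \Phi(r) = \int_0^s \Phi'(r+v)\, dv$ and $\Phi(s) = \int_0^s \Phi'(v)\, dv$. Since $\Phi'$ is concave, $\Phi''$ is non-increasing, so $v \mapsto \Phi'(r+v) - \Phi'(v) = \int_v^{v+r} \Phi''(w)\, dw$ is itself non-increasing in $v$ and is therefore bounded above by its value at $v = 0$, namely $\Phi'(r)$. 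This gives $\Phi(r+s) - \Phi(r) - \Phi(s) \le s\Phi'(r)$, and the symmetric argument yields $\Phi(r+s) - \Phi(r) - \Phi(s) \le r\Phi'(s)$. Multiplying the first bound by $r$, the second by $s$, and summing leads to
\[
(r+s)\bigl[\Phi(r+s) - \Phi(r) - \Phi(s)\bigr] \le rs\,\Phi'(r) + rs\,\Phi'(s),
\]
and the right-hand inequality in \eqref{b123}, applied once to $r\Phi'(r)$ and once to $s\Phi'(s)$, converts the right-hand side into $2s\Phi(r) + 2r\Phi(s)$. I expect this last step to be the main hurdle, as it is the only one that combines concavity of $\Phi'$ with the boundary values in a genuinely quantitative way; the other four items reduce to short, standard manipulations once the appropriate convexity or monotonicity inequality is chosen.
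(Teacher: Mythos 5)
Your argument is correct throughout, and for \eqref{b122}--\eqref{b124} it matches the paper's proof in spirit (with small cosmetic variations: for the right half of \eqref{b123} you integrate the inequality $\Phi'(s)\ge(s/r)\Phi'(r)$ coming from concavity of $\Phi'$, where the paper instead applies the concavity of $\Phi(r)/r$ at the endpoint; for \eqref{b124} with $\lambda\ge1$ you use concavity of $\Phi(r)/r$ where the paper integrates $\Phi'(s)\ge\Phi'(\lambda s)/\lambda$ -- both equally short).

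Your proof of \eqref{b125}, though, is a genuinely different and noticeably more economical route. You write
\[
\Phi(r+s)-\Phi(r)-\Phi(s)=\int_0^s\bigl[\Phi'(r+v)-\Phi'(v)\bigr]\,dv,
\]
observe that the integrand is non-increasing in $v$ (its $v$-derivative is $\Phi''(r+v)-\Phi''(v)\le0$), hence bounded by its value $\Phi'(r)$ at $v=0$, and conclude $\Phi(r+s)-\Phi(r)-\Phi(s)\le\min\{s\Phi'(r),r\Phi'(s)\}$; the convex combination of these two bounds together with the right half of \eqref{b123} then gives \eqref{b125} in one line. The paper instead integrates the second-order identity $\partial_\sigma\partial_\rho\bigl[(\rho+\sigma)\Phi(\rho+\sigma)\bigr]=(\rho+\sigma)\Phi''(\rho+\sigma)+2\Phi'(\rho+\sigma)$ over the rectangle $(0,r)\times(0,s)$, bounds the integrand using concavity of $\Phi'$ and the subadditivity $\Phi'(\rho+\sigma)\le\Phi'(\rho)+\Phi'(\sigma)$, and then subtracts the cross terms $s\Phi(r)+r\Phi(s)$ at the end. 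Both proofs rest, in the end, on the subadditivity of $\Phi'$ (which is just concavity plus $\Phi'(0)=0$), but your single-integral version avoids the double-integral bookkeeping and the intermediate estimate $(\rho+\sigma)\Phi''(\rho+\sigma)+2\Phi'(\rho+\sigma)\le3(\Phi'(\rho)+\Phi'(\sigma))$ entirely. It is a cleaner argument for this item.
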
 

\begin{proof} The inequalities \eqref{b123}-\eqref{b125} being obviously true when $r=0$ or $s=0$, we consider $r>0$, $s>0$, and $t\in
[0,1]$. Thanks to the concavity of $\Phi'$, we have 
\begin{eqnarray*}
\frac{\Phi(t r + (1-t) s)}{t r + (1-t) s} & = & \int_0^1
\Phi'(z (t r + (1-t) s) )\ dz \\
& \ge & \int_0^1 \left( t\ \Phi'(z r) + (1-t)\ \Phi'(z s)
\right)\ dz \\
& \ge & t\ \frac{\Phi(r)}{r} + (1-t)\ \frac{\Phi(s)}{s}\,,
\end{eqnarray*}
whence \eqref{b122}. 

Next, the convexity of $\Phi$ ensures that 
$$
\Phi(0) - \Phi(r) \ge -r\ \Phi'(r)\,, \;\;\; r\ge 0\,,
$$
from which the first inequality in \eqref{b123} follows. Similarly, we deduce from \eqref{b122} that, for $r\ge
0$, we have
\begin{eqnarray*}
\Phi'(0) - \frac{\Phi(r)}{r} & \le & -r\ \left( \frac{\Phi'(r)}{r} -
\frac{\Phi(r)}{r^2} \right) \\
- \frac{\Phi(r)}{r} & \le & - \Phi'(r) + \frac{\Phi(r)}{r} \\
r\ \Phi'(r) & \le & 2\ \Phi(r)\,,
\end{eqnarray*}
which completes the proof of \eqref{b123}. 

Combining the convexity of $\Phi$ with \eqref{b123} gives
$$
s \Phi'(r) = (s-r) \Phi'(r) + r \Phi'(r) \le \Phi(s) -\Phi(r) + 2 \Phi(r) 
$$
for $r\ge 0$ and $s\ge 0$, hence \eqref{b123b}.

Consider now $r\ge 0$ and $\lambda\in [0,1]$. We infer from the monotonicity \eqref{b107} of $\Phi$ that
$$
\Phi(\lambda r) \le \Phi(r) \le \max{\{ 1 , \lambda^2 \}}\
\Phi(r)\,.
$$
Next, for $r\ge 0$, $s\in [0,r]$, and $\lambda>1$, it follows from the concavity and non-negativity of $\Phi'$ that 
$$
\Phi'(s) = \Phi'\left( \frac{\lambda s}{\lambda}  + \left( 1 -
\frac{1}{\lambda} \right) 0 \right) \ge \frac{\Phi'(\lambda s)}{\lambda}
\,.$$
We integrate this inequality with respect to $s$ over $(0,r)$ to
obtain
$$
\Phi(r) \ge \frac{\Phi(\lambda r)}{\lambda^2}\,,
$$
and complete the proof of \eqref{b124}.

Finally, let $r\ge 0$, $s\ge 0$, $\rho\in [0,r]$, and $\sigma\in
[0,s]$. We infer from the concavity of $\Phi'$ that
$$
\Phi'(\rho+\sigma) - \Phi'(\rho) \ge \sigma\ \Phi''(\rho+\sigma)
\;\mbox{ and }\; \Phi'(\rho+\sigma) - \Phi'(\sigma) \ge \rho\
\Phi''(\rho+\sigma)\,,
$$ 
whence
\begin{equation}
\label{b126}
(\rho+\sigma)\ \Phi''(\rho+\sigma) + 2\ \Phi'(\rho+\sigma) \le 4\
\Phi'(\rho+\sigma) - \Phi'(\rho) - \Phi'(\sigma)\,. 
\end{equation}
We use once more the concavity of $\Phi'$ to obtain
$$
\Phi''(\tau) \ge \Phi''(\tau+\sigma)\,, \;\;\; \tau\ge 0\,.
$$
Integrating this inequality with respect to $\tau$ over $(0,\rho)$ we conclude that 
\begin{equation}
\label{b127}
\Phi'(\rho+\sigma) \le \Phi'(\rho)+\Phi'(\sigma)\,,
\end{equation}
since $\Phi'(0)=0$. It next follows from \eqref{b126} and \eqref{b127}  that 
$$
(\rho+\sigma)\ \Phi''(\rho+\sigma) + 2\ \Phi'(\rho+\sigma) \le 3 \left( \Phi'(\rho) + \Phi'(\sigma) \right)\ .
$$ 
As  
\begin{align*}
& (r+s)\ \Phi(r+s) - r\ \Phi(r) - s\ \Phi(s) \\
& \qquad = \int_0^r \int_0^s \left\{ (\rho+\sigma)\ \Phi''(\rho+\sigma) + 2\ \Phi'(\rho+\sigma) \right\}\ d\sigma d\rho\ ,
\end{align*}
the previous inequality gives the upper bound
\begin{align*}
(r+s)\ \Phi(r+s) - r\ \Phi(r) - s\ \Phi(s) & \le 3\ \int_0^r \int_0^s
\left( \Phi'(\rho) + \Phi'(\sigma) \right) \ d\sigma d\rho \\
& = 3 \left( s\ \Phi(r) + r\ \Phi(s) \right)\ ,
\end{align*} 
which we combine with
\begin{align*}
(r+s) \left( \Phi(r+s) - \Phi(r) - \Phi(s) \right) & = (r+s)\ \Phi(r+s) - r\ \Phi(r) - s\ \Phi(s) \\
& \qquad - s\ \Phi(r) - r\ \Phi(s)\ ,
\end{align*}
to obtain \eqref{b125}. 
\end{proof}

\begin{remark}\label{reb128}
The property \eqref{b124} implies that $\Phi$ enjoys the so-called $\Delta_2$-condition, namely, there exists $\ell>1$ such that
$\Phi(2 r)\le \ell\ \Phi(r)$ for $r\ge 0$.  It also follows from \eqref{b123} that $\Phi$ grows at most quadratically at infinity.
\end{remark}

\subsection{Weak convergence in $L^1$ and a.e. convergence}
\label{subsec:25}

There are several connections between weak convergence in $L^1$ and almost everywhere convergence. The combination of both is actually equivalent to the strong convergence in $L^1(\Omega)$ according to Vitali's convergence theorem, see \cite[Theorem~III.3.6]{DS57} for instance.

\begin{theorem}[Vitali] \label{thd100}
Consider a sequence $(f_n)_{n\ge 1}$ in $L^1(\Omega)$ and a function $f\in L^1(\Omega)$ such that $(f_n)_{n\ge 1}$ converges $\mu$-a.e. towards $f$. The following two statements are equivalent:
\begin{itemize}
\item[\textbf{(i)}] $(f_n)_{n\ge 1}$ converges (strongly) towards $f$ in $L^1(\Omega)$.
\item[\textbf{(ii)}] The set $\left\{ f_n\ : \ n\ge 1 \right\}$ is bounded in $L^1(\Omega)$ and fulfills the conditions \eqref{b104} and \eqref{b105}. 
\end{itemize}
\end{theorem}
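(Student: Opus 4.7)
The plan is to treat the two implications separately, with the substantive work in the direction (ii) $\Longrightarrow$ (i).

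For (i) $\Longrightarrow$ (ii), boundedness will follow from the triangle inequality $\|f_n\|_1\le \|f\|_1+\|f_n-f\|_1$. To establish \eqref{b104}, I would fix $\varepsilon>0$, choose $N\ge 1$ with $\|f_n-f\|_1\le \varepsilon/2$ for $n\ge N$, and invoke Corollary~\ref{cob119} (or, equivalently, the absolute continuity of the Lebesgue integral) applied to each element of the finite collection $\{f,f_1,\ldots,f_{N-1}\}$ to obtain a common $\delta>0$ such that $\int_A|g|\,d\mu\le\varepsilon/2$ whenever $g$ lies in this finite family and $\mu(A)\le\delta$. The estimate $\int_A|f_n|\,d\mu\le\int_A|f|\,d\mu+\|f_n-f\|_1$ for $n\ge N$ would then yield $\eta\{\{f_n\},\delta\}\le\varepsilon$, hence $\eta\{\{f_n\}\}=0$. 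For \eqref{b105} I would argue analogously, producing $\Omega_\varepsilon$ of finite measure (possible by $\sigma$-finiteness combined with monotone convergence applied to $|f|\chi_{B_k}$ for an exhausting sequence $B_k$) that carries all but $\varepsilon/2$ of the mass of each function in the same finite family, and absorbing the tail $n\ge N$ via $L^1$-closeness with $f$.

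For (ii) $\Longrightarrow$ (i), I would fix $\varepsilon>0$ and start by applying \eqref{b105} to obtain $\Omega_\varepsilon\in\mathcal{B}$ with $\mu(\Omega_\varepsilon)<\infty$ and $\int_{\Omega\setminus\Omega_\varepsilon}|f_n|\,d\mu\le\varepsilon$ uniformly in $n$; Fatou's lemma would transfer this bound to $|f|$. Next, Proposition~\ref{prb202} supplies $\delta>0$ such that $\eta\{\{f_n\},\delta\}\le\varepsilon$ (and the same then holds for $|f|$ by Fatou). Since $\mu(\Omega_\varepsilon)<\infty$ and $f_n\to f$ $\mu$-a.e., Egorov's theorem would furnish a measurable $E\subset \Omega_\varepsilon$ with $\mu(\Omega_\varepsilon\setminus E)\le \delta$ on which the convergence is uniform, whence $\int_E|f_n-f|\,d\mu\le \varepsilon$ for $n$ large. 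Decomposing
\[
\int_\Omega|f_n-f|\,d\mu\le \int_{\Omega\setminus\Omega_\varepsilon}\bigl(|f_n|+|f|\bigr)\,d\mu+\int_{\Omega_\varepsilon\setminus E}\bigl(|f_n|+|f|\bigr)\,d\mu+\int_E|f_n-f|\,d\mu
\]
and bounding each piece (the first two by $2\varepsilon$ via tightness and uniform integrability, the last by $\varepsilon$) would give $\|f_n-f\|_1\le 5\varepsilon$ for $n$ sufficiently large.

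The main point to get right is the order of quantifiers in the second implication: $\Omega_\varepsilon$ must be chosen first to handle tightness, $\delta$ is then extracted from the uniform integrability modulus, and only then can Egorov be applied on $\Omega_\varepsilon$ to furnish a set $E$ whose complement in $\Omega_\varepsilon$ is small enough to fall within $\eta\{\{f_n\},\delta\}$. The individual ingredients -- Egorov, Fatou, uniform integrability, absolute continuity of integrals -- are each routine; coordinating them is really the only subtle step.
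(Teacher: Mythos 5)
Your proposal is correct, and it takes a genuinely different route from the paper in the substantive direction (ii) $\Longrightarrow$ (i). You split $\Omega_\varepsilon$ via Egorov into a piece $E$ of uniform convergence and a small remainder $\Omega_\varepsilon\setminus E$, then absorb the remainder by the uniform integrability modulus from Proposition~\ref{prb202}; condition~\eqref{b104} enters precisely there. The paper avoids Egorov by truncating at level $\varepsilon^{-1}$ instead: the bounded piece $\{|f_n-f|\le\varepsilon^{-1}\}\cap\Omega_\varepsilon$ is handled by dominated convergence on the finite-measure set $\Omega_\varepsilon$, while the tail $\{|f_n-f|>\varepsilon^{-1}\}$ is absorbed into the error term. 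These two mechanisms are interchangeable ways of exploiting a.e.\ convergence on a finite-measure set. It is worth flagging, however, that the paper's displayed estimate $\int_{\{|f_n-f|\ge R\}}|f_n-f|\,d\mu\le R^{-1}\bigl(\|f\|_1+\sup_m\|f_m\|_1\bigr)$ is presented as a consequence of $L^1$-boundedness alone, which it is not: on $(0,1)$ with $f_n=n\,\mathbf{1}_{(0,1/n)}$ and $f=0$ the left side equals $1$ for all $n>R$ while the right side is $R^{-1}<1$. What actually holds from boundedness is the Chebyshev bound $\mu(\{|f_n-f|\ge R\})\le R^{-1}(\|f\|_1+\sup_m\|f_m\|_1)$, and one then needs \eqref{b104} (together with absolute continuity of $\int|f|$) to make the truncation tail uniformly small. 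Your argument inserts uniform integrability in exactly the right place, so it is not only a valid alternative but, in this respect, cleaner than the proof as printed. Your treatment of (i) $\Longrightarrow$ (ii), by reducing to a finite family plus an $L^1$-small tail, is also correct and standard; the paper simply declares that direction obvious.
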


In other words, the weak convergence in $L^1(\Omega)$ coupled with the $\mu$-almost everywhere convergence imply the convergence in $L^1(\Omega)$.

\begin{proof} As the proof that (i) $\Longrightarrow$ (ii) is obvious, we turn to the proof of the converse and fix $\varepsilon>0$. On the one hand, we deduce from \eqref{b105} and the integrability of $f$ that there exist $\Omega_\varepsilon\in\mathcal{B}$ with $\mu(\Omega_\varepsilon)<\infty$ such that 
$$
\sup_{n\ge 1} \int_{\Omega\setminus\Omega_\varepsilon} \left( |f_n| + |f| \right)\ d\mu \le \varepsilon\,.
$$
On the other hand, since $f\in L^1(\Omega)$ and $(f_n)_{n\ge 1}$ is bounded in $L^1(\Omega)$, we have
$$
\int_{\{|f_n-f|\ge R\}} |f_n-f|\ d\mu \le \frac{1}{R}\ \left( \|f\|_1 + \sup_{m\ge 1}{\{\| f_m\|_1\}} \right) \;\;\mbox{ for }\;\; R>0\,.
$$
Then, 
\begin{eqnarray*}
\|f_n-f\|_1 & \le & \int_{\Omega\setminus\Omega_\varepsilon} \left( |f_n| + |f| \right)\ d\mu + \int_{\Omega_\varepsilon} |f_n - f|\ \mathbf{1}_{\{ |f_n - f| \le \varepsilon^{-1}\}}\ d\mu \\
& & +\ \int_{\Omega_\varepsilon} |f_n - f|\ \mathbf{1}_{\{ |f_n - f| > \varepsilon^{-1}\}}\ d\mu \\
& \le & \varepsilon \left( 1 + \|f\|_1 + \sup_{m\ge 1}{\{\| f_m\|_1\}} \right) + \int_{\Omega_\varepsilon} |f_n - f|\ \mathbf{1}_{\{ |f_n - f| \le \varepsilon^{-1}\}}\ d\mu\,.
\end{eqnarray*}
Since $\Omega_\varepsilon$ has a finite measure, we now infer from the almost everywhere convergence of $(f_n)_{n\ge 1}$ and the Lebesgue dominated convergence theorem that the last term of the right-hand side of the above inequality converges to zero as $n\to\infty$. Consequently, 
$$
\limsup_{n\to\infty} \|f_n-f\|_1 \le \varepsilon \left( 1 + \|f\|_1 + \sup_{m\ge 1}{\{\| f_m\|_1\}} \right)\,.
$$
Letting $\varepsilon\to 0$ completes the proof. 
\end{proof}

\begin{remark}\label{red101} The $\mu$-a.e. convergence of $(f_n)_{n\ge 1}$ in Theorem~\ref{thd100} can be replaced by the convergence in measure.
\end{remark}

Another useful consequence is the following result which is implicitly used in \cite{DPL89,St89}, for instance. It allows one to identify the limit of the product of a weakly convergent sequence in $L^1$ with a bounded sequence which has an almost everywhere limit.

\begin{proposition}\label{prd2}
Let $(f_n)_{n\ge 1}$ be a sequence of measurable functions in $L^1(\Omega)$ and $(g_n)_{n\ge 1}$ be a sequence of measurable functions in $L^\infty(\Omega)$. Assume further that there are $f\in L^1(\Omega)$ and $g\in L^\infty(\Omega)$ such that
\begin{equation}
\label{d1}
f_n \rightharpoonup f \;\mbox{ in }\; L^1(\Omega)\,,
\end{equation}
\begin{equation}
\label{d2}
\vert g_n(x)\vert \le M \;\mbox{ and }\; \lim_{n\to \infty} g_n(x)
= g(x) \;\;\mu-\mbox{a.e. }
\end{equation} 
Then 
\begin{equation}
\label{d3}
\lim_{n\to \infty} \int_\Omega \vert f_n\vert\ \vert g_n - g\vert\ d\mu = 0
\;\mbox{ and }\; f_n g_n \rightharpoonup f g \;\mbox{ in }\; L^1(\Omega)\,.
\end{equation}
\end{proposition}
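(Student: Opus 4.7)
The plan is to combine the Dunford-Pettis characterization of weak compactness in $L^1$ (Theorem~\ref{thb103}) with Egorov's theorem, splitting $\Omega$ into three pieces on which $\int |f_n|\,|g_n-g|\,d\mu$ can be controlled for different reasons. Once the first assertion in \eqref{d3} is established, the weak convergence statement will follow by a short algebraic manipulation.

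First, I would observe that since $f_n \rightharpoonup f$ in $L^1(\Omega)$, the family $\mathcal{F}:=\{f_n : n\ge 1\}$ is relatively weakly sequentially compact in $L^1(\Omega)$, so by Theorem~\ref{thb103} it is bounded in $L^1(\Omega)$, satisfies $\eta\{\mathcal{F}\} = 0$, and is tight in the sense of~\eqref{b105}. Fix $\varepsilon > 0$. By the tightness property there exists $\Omega_\varepsilon \in \mathcal{B}$ with $\mu(\Omega_\varepsilon) < \infty$ and $\sup_n \int_{\Omega \setminus \Omega_\varepsilon} |f_n|\, d\mu \le \varepsilon$. Since $|g_n - g| \le 2M$ pointwise by~\eqref{d2}, this already gives
\begin{equation*}
\int_{\Omega \setminus \Omega_\varepsilon} |f_n|\, |g_n - g|\, d\mu \le 2M \varepsilon.
\end{equation*}

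Next I would work on $\Omega_\varepsilon$, which has finite measure. By Egorov's theorem applied to the $\mu$-a.e.\ convergent sequence $(g_n)$ on $\Omega_\varepsilon$, for any $\delta > 0$ there exists a measurable set $A_\delta \subset \Omega_\varepsilon$ with $\mu(\Omega_\varepsilon \setminus A_\delta) \le \delta$ such that $g_n \to g$ uniformly on $A_\delta$. On $A_\delta$ I would use uniform convergence and the $L^1$-bound on $\mathcal{F}$:
\begin{equation*}
\int_{A_\delta} |f_n|\, |g_n - g|\, d\mu \le \|g_n - g\|_{L^\infty(A_\delta)} \sup_m \|f_m\|_1 \mathop{\longrightarrow}_{n\to\infty} 0.
\end{equation*}
On the remaining set $\Omega_\varepsilon \setminus A_\delta$, which has measure at most $\delta$, I would invoke Definition~\ref{deb100} of the modulus of uniform integrability to bound
\begin{equation*}
\int_{\Omega_\varepsilon \setminus A_\delta} |f_n|\, |g_n - g|\, d\mu \le 2M\, \eta\{\mathcal{F}, \delta\}.
\end{equation*}
Putting the three pieces together yields $\limsup_n \int_\Omega |f_n|\,|g_n - g|\, d\mu \le 2M\varepsilon + 2M\,\eta\{\mathcal{F},\delta\}$. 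Letting first $\delta \to 0$ (using $\eta\{\mathcal{F}\} = 0$) and then $\varepsilon \to 0$ proves the first statement in~\eqref{d3}.

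For the second statement I would use the decomposition $f_n g_n - f g = f_n(g_n - g) + (f_n - f) g$. For any test function $\varphi \in L^\infty(\Omega)$, the product $\varphi g$ lies in $L^\infty(\Omega)$, so $\int_\Omega \varphi g (f_n - f)\, d\mu \to 0$ by~\eqref{d1}, while
\begin{equation*}
\left| \int_\Omega \varphi f_n (g_n - g)\, d\mu \right| \le \|\varphi\|_\infty \int_\Omega |f_n|\, |g_n - g|\, d\mu \longrightarrow 0
\end{equation*}
by the first part of~\eqref{d3}. I expect the main technical obstacle to be the bookkeeping in Step~2--4: one must choose the order of quantifiers carefully so that $\Omega_\varepsilon$ depends only on $\varepsilon$ (not on $n$) while $A_\delta$ uses finiteness of $\mu(\Omega_\varepsilon)$ to apply Egorov, and then the uniform integrability of $\mathcal{F}$ handles the small-measure remainder independently of $n$. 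No other subtlety arises, because $|g_n - g| \le 2M$ provides the crucial uniform $L^\infty$-bound throughout.
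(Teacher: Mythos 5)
Your proof is correct and follows essentially the same route as the paper: Dunford--Pettis supplies tightness and the vanishing of the modulus of uniform integrability, Egorov gives a set of large measure inside $\Omega_\varepsilon$ where the convergence of $(g_n)$ is uniform, and the integral is split over the same three pieces. The only cosmetic difference is that the paper ties $\delta$ to $\varepsilon$ from the outset (choosing both so that each contribution is at most $\varepsilon/(4M)$), whereas you keep $\varepsilon$ and $\delta$ as independent parameters and send them to zero in sequence; both bookkeepings are equally valid.
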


The proof of Proposition~\ref{prd2} combines the Dunford-Pettis theorem (Theorem~\ref{thb103}) with Egorov's theorem which we recall now, see \cite[p.~73]{Ru87} for instance.

\begin{theorem}[Egorov]\label{thb10000}
Assume that $\mu(\Omega)<\infty$ and consider a sequence $(h_n)_{n\ge 1}$ of measurable functions in $\Omega$ such that $h_n \to h $ $\mu-$a.e. for some measurable function $h$. Then, for any $\delta>0$, there is a measurable subset $A_\delta\in\mathcal{B}$ such that
$$
\mu(A_\delta) \le \delta \;\;\;\text{ and }\;\;\; \lim_{n\to\infty} \sup_{x\in\Omega\setminus A_\delta} |h_n(x)-h(x)| = 0\ .
$$  
\end{theorem}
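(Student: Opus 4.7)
The plan is to exploit the finiteness of $\mu$ to convert pointwise a.e. convergence into uniform convergence off a set of arbitrarily small measure, by quantifying the rate of convergence at countably many precision scales and taking a union of suitably chosen tail-sets.

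First, for each precision level $k\ge 1$ and each threshold $n\ge 1$, I would introduce the ``bad set'' at level $k$ beyond index $n$,
\begin{equation*}
E_{n,k} := \bigcup_{m\ge n} \left\{ x\in\Omega\ :\ |h_m(x) - h(x)| \ge 1/k \right\}\,.
\end{equation*}
For fixed $k$, the sequence $(E_{n,k})_{n\ge 1}$ is non-increasing in $n$, and its intersection $\bigcap_{n\ge 1} E_{n,k}$ is contained in the set of points where $(h_n)$ fails to converge to $h$, which has $\mu$-measure zero by assumption. Since $\mu(\Omega)<\infty$, I may invoke the continuity from above of $\mu$ to conclude that $\mu(E_{n,k})\to 0$ as $n\to\infty$ for every fixed $k$. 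This is the only place where the finite measure hypothesis enters, and it is indispensable, since $\mu(E_{1,k})\le \mu(\Omega)$ is needed to start the downward continuity.

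Next, given $\delta>0$, for every $k\ge 1$ I would choose $n_k$ large enough that $\mu(E_{n_k,k}) \le \delta/2^k$, and define
\begin{equation*}
A_\delta := \bigcup_{k\ge 1} E_{n_k,k}\,.
\end{equation*}
Countable subadditivity then gives $\mu(A_\delta) \le \sum_{k\ge 1} \delta/2^k = \delta$, which is the first required property. For the uniform convergence, I would fix $k\ge 1$ and observe that if $x\in\Omega\setminus A_\delta$, then in particular $x\notin E_{n_k,k}$, so $|h_m(x)-h(x)|<1/k$ for all $m\ge n_k$, yielding
\begin{equation*}
\sup_{x\in\Omega\setminus A_\delta} |h_m(x) - h(x)| \le \frac{1}{k} \quad \text{for all } m\ge n_k\,,
\end{equation*}
which shows the desired uniform convergence outside $A_\delta$.

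The only real subtlety is the dovetailing: one must allocate the total measure budget $\delta$ across infinitely many precision scales, which forces the geometric allocation $\delta/2^k$ (or anything summable). The rest is bookkeeping, once the finite-measure continuity from above has been applied to show that each $\mu(E_{n,k})$ can be made as small as we wish by taking $n$ large.
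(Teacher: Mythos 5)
Your proof is correct: the sets $E_{n,k}$ are measurable, continuity from above (which is where $\mu(\Omega)<\infty$ is genuinely needed) gives $\mu(E_{n,k})\to 0$ as $n\to\infty$ since $\bigcap_{n}E_{n,k}$ lies in the null set of non-convergence, and the geometric allocation $\delta/2^k$ followed by the union yields both $\mu(A_\delta)\le\delta$ and uniform convergence on $\Omega\setminus A_\delta$. The paper itself does not prove Egorov's theorem but merely cites Rudin; your argument is exactly the classical one found there, so there is nothing to add.
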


\begin{proof}[Proof of Proposition~\ref{prd2}] Let $\varepsilon\in (0,1)$. On the one hand, the Dunford-Pettis theorem and \eqref{d1} ensure that there exist
$\delta>0$ and  $\Omega_\varepsilon\subset \Omega$ such that $\mu(\Omega_\varepsilon)<\infty$,  
$$ 
\sup_{n\ge 1} \int_{\Omega\setminus\Omega_\varepsilon} |f_n|\ d\mu \le \frac{\varepsilon}{4 M}\,, \;\;\mbox{ and }\;\; 
\eta\left\{ (f_n)_{n\ge 1},\delta \right\} \le \frac{\varepsilon}{4 M}\,.
$$
On the other hand, since $\mu(\Omega_\varepsilon)<\infty$, we deduce from Egorov's theorem and \eqref{d2} that there is $\mathcal{O}_\varepsilon\subset \Omega_\varepsilon$ such that 
$$
\mu(\Omega_\varepsilon\setminus \mathcal{O}_\varepsilon)\le\delta \;\;\mbox{ and }\;\; \lim_{n\to \infty} \sup_{x\in \mathcal{O}_\varepsilon} |(g_n-g)(x)| = 0\,.
$$ 
Then
\begin{eqnarray*}
\int_\Omega |f_n|\ |g_n - g|\ d\mu & \le &
2M\ \int_{\Omega\setminus\Omega_\varepsilon} |f_n|\ d\mu +
2M\ \int_{\Omega_\varepsilon\setminus\mathcal{O}_\varepsilon} |f_n|\ d\mu \\
& & +\ \int_{\mathcal{O}_\varepsilon} |f_n|\ |g_n - g|\ d\mu \\
& \le & \varepsilon + \sup_{m\ge 1} \| f_m\|_{L^1}\
\sup_{x\in\mathcal{O}_\varepsilon} |(g_n-g)(x)|\,.
\end{eqnarray*}
Consequently, 
$$
\limsup_{n\to \infty} \int_\Omega |f_n|\ |g_n - g|\ d\mu \le \varepsilon\,,
$$
and $(f_n(g_n-g))_{n\ge 1}$ converges strongly towards zero in
$L^1(\Omega)$. Since $g\in L^\infty(\Omega)$, the second statement in Proposition~\ref{prd2} readily follows from the first one and \eqref{d1}. 
\end{proof}

\begin{remark}\label{red10001} 
Proposition~\ref{prd2} is somehow an extension of the following classical result: Let $p\in (1,\infty)$. If $f_n\rightharpoonup f$ in $L^p(\Omega)$ and $g_n \longrightarrow g$ in $L^{p/(p-1)}(\Omega)$, then $f_n g_n \rightharpoonup fg$ in $L^1(\Omega)$.
\end{remark}

The final result of this section is a generalization of  Proposition~\ref{prd2} and Remark~\ref{red10001}.

\begin{proposition}\label{prd3}
Let $\psi\in C([0,\infty)$ be a non-negative convex function satisfying $\psi(0)=0$ and $\psi(r)\ge C_0 r$ for $r\ge 1$ and some $C_0>0$, and denote its convex conjugate function by $\psi^*$. Assume that $\mu(\Omega)<\infty$ and consider two sequences $(f_n)_{n\ge 1}$ and $(g_n)_{n\ge 1}$ of real-valued integrable functions in $\Omega$ enjoying the following properties: there are $f$ and $g$ in $L^1(\Omega)$ such that
\begin{enumerate}
\item $f_n \rightharpoonup f$ in $L^1(\Omega)$ and 
$$
C_1 := \sup_{n\ge 1} \int_\Omega \psi(|f_n|)\ d\mu < \infty\ ,
$$
\item $g_n \longrightarrow g$ $\mu-$a.e. in $\Omega$,
\item for each $\varepsilon\in (0,1]$, the family $\mathcal{G}_\varepsilon := \left\{ \psi^*(|g_n|/\varepsilon)\ :\ n\ge 1 \right\}$ is uniformly integrable in $L^1(\Omega)$.
\end{enumerate}
Then
$$
f_n g_n \rightharpoonup f g \;\;\text{ in}\;\; L^1(\Omega)\ .
$$
\end{proposition}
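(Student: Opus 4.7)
The plan is to invoke the Dunford--Pettis theorem (Theorem~\ref{thb103}) to obtain weak sequential compactness of $(f_n g_n)_{n\ge 1}$ in $L^1(\Omega)$, and then identify every weak cluster point with $fg$ via a truncation argument that reduces us to Proposition~\ref{prd2}. Since $\mu(\Omega)<\infty$, condition \eqref{b105} of the Dunford--Pettis theorem is automatic by Remark~\ref{reb106}, so it suffices to check $L^1$-boundedness and the vanishing of the modulus of uniform integrability of $(f_n g_n)$. The principal tool throughout will be the scaled Young inequality $ab \le \psi(\lambda a) + \psi^*(b/\lambda)$ for $a,b\ge 0$ and $\lambda>0$, combined with the standard bound $\psi(\lambda r)\le\lambda\psi(r)$ for $\lambda\in [0,1]$ (which follows from convexity and $\psi(0)=0$).

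For uniform integrability of $(f_n g_n)$, given $\eta>0$ I first select $\lambda\in(0,1]$ with $\lambda C_1\le\eta/2$, so that $\int_\Omega \psi(\lambda|f_n|)\,d\mu\le \lambda C_1\le \eta/2$ uniformly in $n$. By hypothesis~(3) applied with $\varepsilon=\lambda$, the family $\mathcal{G}_\lambda$ is uniformly integrable, hence there exists $\delta>0$ with $\int_A \psi^*(|g_n|/\lambda)\,d\mu\le \eta/2$ whenever $\mu(A)\le\delta$. Scaled Young then yields
\[
\int_A |f_n g_n|\, d\mu \le \int_\Omega \psi(\lambda|f_n|)\, d\mu + \int_A \psi^*(|g_n|/\lambda)\, d\mu \le \eta
\]
for every $A\in\mathcal{B}$ with $\mu(A)\le\delta$ and every $n\ge 1$; the same computation with $A=\Omega$ and $\lambda=1$, using the $L^1$-boundedness of $\mathcal{G}_1$ (a consequence of its uniform integrability), gives $\sup_n \|f_n g_n\|_1<\infty$. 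Theorem~\ref{thb103} then ensures that $(f_n g_n)_{n\ge 1}$ is relatively weakly sequentially compact in $L^1(\Omega)$.

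To identify any weak cluster point $h$ of $(f_n g_n)$, note first that $fg\in L^1(\Omega)$: indeed, the weak $L^1$-lower semicontinuity of the convex functional $u\mapsto\int \psi(|u|)\,d\mu$ gives $\int \psi(|f|)\,d\mu\le C_1$, Fatou's lemma applied to $g_n\to g$ a.e.~gives $\int \psi^*(|g|)\,d\mu<\infty$, and Young's inequality furnishes $|fg|\le \psi(|f|)+\psi^*(|g|)$. For $M>0$, set $T_M(r):=\mathrm{sign}(r)\min\{|r|,M\}$; then $T_M(g_n)\to T_M(g)$ a.e.~with $|T_M(g_n)|\le M$, so Proposition~\ref{prd2} yields $f_{n_k}T_M(g_{n_k})\rightharpoonup fT_M(g)$ in $L^1(\Omega)$ along the subsequence $(n_k)$ extracting $h$. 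For the truncation error, $|g_n-T_M(g_n)|\le |g_n|\mathbf{1}_{\{|g_n|>M\}}$ and scaled Young give
\[
\int_\Omega |f_n|\,|g_n-T_M(g_n)|\,d\mu \le \lambda C_1 + \int_{\{|g_n|>M\}} \psi^*(|g_n|/\lambda)\,d\mu\,.
\]
Since $\psi^*$ is a non-trivial non-negative convex function on $[0,\infty)$ with $\psi^*(0)=0$ (non-trivial because a real-valued $\psi$ prevents $\psi^*\equiv 0$), it is non-decreasing and unbounded; combining the inclusion $\{|g_n|>M\}\subseteq\{\psi^*(|g_n|/\lambda)\ge \psi^*(M/\lambda)\}$ with the Chebyshev bound $\mu(\{|g_n|>M\})\le \sup_n \|\psi^*(|g_n|/\lambda)\|_1/\psi^*(M/\lambda)\to 0$ and the uniform integrability of $\mathcal{G}_\lambda$ forces the second term to vanish uniformly in $n$ as $M\to\infty$. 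Choosing $\lambda$ arbitrarily small then shows $\lim_{M\to\infty}\sup_n \int_\Omega |f_n|\,|g_n-T_M(g_n)|\,d\mu=0$. Combined with $fT_M(g)\to fg$ in $L^1(\Omega)$ by dominated convergence, testing against any $\phi\in L^\infty(\Omega)$ and sending first $k\to\infty$ and then $M\to\infty$ yields $\int h\phi\,d\mu=\int fg\,\phi\,d\mu$, whence $h=fg$; a standard subsequence argument then upgrades weak convergence along subsequences to weak convergence of the full sequence. The hardest step will be this double limit in the truncation argument, where the unboundedness of $\psi^*$ is crucial to control $\mu(\{|g_n|>M\})$ uniformly in $n$.
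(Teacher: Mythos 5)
Your proof is correct, and it takes a genuinely different route from the paper's. The paper does not invoke Dunford--Pettis at all: after establishing $L^1$-boundedness of $(f_ng_n)$ and $fg\in L^1(\Omega)$, it estimates $\int(f_ng_n-fg)\xi\,d\mu$ directly by a four-term decomposition, applying Egorov's theorem by hand to produce a set $A_\delta$ outside of which $g_n\to g$ uniformly, and truncating the \emph{limit} $g$ at the level $k_\delta$ chosen so that $\mu(\{|g|\ge k_\delta\})\le\delta$; the iterated limit is $n\to\infty$, then $\delta\to 0$, then $\varepsilon\to 0$. You instead truncate the \emph{sequence} $g_n$ via $T_M$, delegate the bounded-multiplier step to Proposition~\ref{prd2} (which already packages the Egorov argument), and control the truncation error through the scaled Young inequality together with a Chebyshev bound on $\mu(\{|g_n|>M\})$ that relies on the unboundedness of $\psi^*$ --- a fact you correctly justify from $\psi$ being finite-valued. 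Your compactness step (Dunford--Pettis followed by cluster-point identification) is logically superfluous, since once $fg\in L^1(\Omega)$ is known the same estimates directly give $\int(f_ng_n-fg)\phi\,d\mu\to 0$ for every $\phi\in L^\infty(\Omega)$, which is exactly weak convergence; but it is harmless. The trade-off is that your route modularizes the Egorov machinery inside Proposition~\ref{prd2} and keeps the truncation threshold $M$ independent of $n$ and of the integrability data, at the cost of the extra observation about $\psi^*$; the paper's route is more self-contained but requires re-deriving the Egorov-type uniform estimate inside the proof.
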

 
\begin{proof} We first recall that, given $\varepsilon\in (0,1]$, the uniform integrability of $\mathcal{G}_\varepsilon$ in $L^1(\Omega)$ ensures that
\begin{equation}
C_2(\varepsilon) := \sup_{n\ge 1} \int_\Omega \psi^*\left( \frac{|g_n|}{\varepsilon} \right)\ d\mu < \infty\ , \label{tl100}
\end{equation}
and
\begin{equation}
\lim_{\delta\to 0} \eta\{\mathcal{G}_\varepsilon,\delta\} = 0\ , \label{tl101}
\end{equation}
the modulus of uniform integrability $\eta$ being defined in Definition~\ref{deb100}.
We next observe that, thanks to Young's inequality 
\begin{equation}
rs \le \psi(r) + \psi^*(s)\ , \qquad (r,s)\in [0,\infty)^2\ , \label{tl6}
\end{equation}
which ensures, together with \eqref{tl100} (with $\varepsilon=1$), that
$$
\int_\Omega |f_n g_n|\ d\mu \le \int_\Omega \left( \psi(|f_n|) + \psi^*(|g_n|) \right)\ d\mu \le C_1 + C_2(1)\ .
$$
Consequently, $(f_n g_n)_{n\ge 1}$ is a bounded sequence in $L^1(\Omega)$. Furthermore, the convexity of $\psi$, the weak convergence of $(f_n)_{n\ge 1}$, and a weak lower semicontinuity argument entail that 
\begin{equation}
\psi(|f|) \in L^1(\Omega) \;\;\text{ with }\;\; \int_\Omega \psi(|f|)\ d\mu \le C_1\ , \label{tl103}
\end{equation}
while the $\mu-$almost everywhere convergence of $(g_n)_{n\ge 1}$ along with \eqref{tl100} and the Fatou lemma ensure that, for each $\varepsilon\in (0,1]$ and $\delta>0$,
\begin{equation}
\int_\Omega \psi^*\left( \frac{|g|}{\varepsilon} \right)\ d\mu \le C_2(\varepsilon) \;\;\text{ and }\;\;  \eta\left\{ \left\{ \psi^*\left( \frac{|g|}{\varepsilon} \right) \right\},\delta \right\} \le \eta\{\mathcal{G}_\varepsilon,\delta\}\ . \label{tl105}
\end{equation}
In particular, $f g \in L^1(\Omega)$ as a consequence of \eqref{tl6}, \eqref{tl103}, and \eqref{tl105}. 

We now fix $\varepsilon\in (0,1]$ and $\delta\in (0,1)$. On the one hand, since $\mu(\Omega)<\infty$, we infer from Egorov's theorem that there is a measurable subset $A_{\delta}$ of $\Omega$ such that
\begin{equation}
\mu(A_{\delta})\le\delta \;\;\text{ and }\;\; \lim_{n\to\infty} \sup_{x\in \Omega\setminus A_{\delta}}{|g_n(x)-g(x)|} = 0\ . \label{tl7}
\end{equation}
On the other hand, since $g\in L^1(\Omega)$, there exists $k_\delta\ge 1$ such that 
\begin{equation}
\mu\left( \left\{ x\in \Omega\ :\ |g(x)|\ge k_\delta \right\} \right) \le \delta\ , \label{tl9}
\end{equation}
and we define $g_{\delta} := g\, \mathbf{1}_{(-k_\delta,k_\delta)}(g)$.

Now, for $\xi\in L^\infty(\Omega)$, we define
\begin{equation*}
I(n) := \int_{\Omega} (f_n g_n - f g) \xi\ d\mu\ , 
\end{equation*}
which we estimate as follows:
\begin{align*}
|I(n,R)| & \le \left| \int_{\Omega} (f_n-f) g \xi\ d\mu \right| + \left| \int_{\Omega} f_n (g_n-g) \xi\ d\mu \right| \\
& \le \left| \int_{\Omega} (f_n-f) g_{\delta} \xi\ d\mu \right| + \int_{\Omega} (|f_n| + |f|) |g-g_{\delta}| |\xi|\ d\mu \\
& \qquad + \int_{\Omega\setminus A_{\delta}} |f_n| |g_n-g| |\xi|\ d\mu + \int_{A_{\delta}} |f_n| (|g_n|+|g|) |\xi|\ d\mu\ .
\end{align*}
It first follows from Young's inequality \eqref{tl6} and the convexity of $\psi$ that
\begin{align*}
I_1(n,\delta) & := \int_{\Omega} (|f_n| + |f|) |g-g_{\delta}| |\xi|\ d\mu \le \int_{\Omega\cap \{|g|\ge k_\delta\}} (|f_n| + |f|) |g| |\xi|\ d\mu\\
& \le \|\xi\|_{\infty} \int_{\Omega\cap \{|g|\ge k_\delta\}} \left[ \psi(\varepsilon |f_n|) + \psi(\varepsilon |f|) + 2 \psi^*\left( \frac{|g|}{\varepsilon} \right) \right]\ d\mu \\
& \le \|\xi\|_{\infty} \left[ \varepsilon \int_\Omega \left( \psi(|f_n|) + \psi(|f|) \right)\ d\mu + 2 \int_{\Omega\cap \{|g|\ge k_\delta\}} \psi^*\left( \frac{|g|}{\varepsilon} \right)\ d\mu \right]\ .
\end{align*}
We then infer from \eqref{tl103}, \eqref{tl105}, and \eqref{tl9} that
\begin{equation}
I_1(n,\delta) \le 2 \|\xi\|_{\infty} \left( \varepsilon C_1 + \eta\{\mathcal{G}_\varepsilon,\delta\} \right)\ . \label{tl12}
\end{equation}
Next,
\begin{equation} 
I_2(n,\delta) := \int_{\Omega\setminus A_{\delta}} |f_n| |g_n-g| |\xi|\ d\mu \le \|\xi\|_{\infty} \sup_{\Omega\setminus A_{\delta}}{\{|g_n-g|\}}\ \sup_{m\ge 1}{\{\|f_m\|_{1}\}}\ . \label{tl13}
\end{equation}
We finally infer from \eqref{tl6}, \eqref{tl105}, \eqref{tl7}, and the convexity of $\psi$ that
\begin{align}
I_3(n,\delta) & := \int_{A_{\delta}} |f_n| (|g_n|+|g|) |\xi|\ d\mu \nonumber \\
& \le \|\xi\|_{\infty} \int_{A_{\delta}} \left[ 2 \psi(\varepsilon |f_n|) + \psi^*\left( \frac{|g_n|}{\varepsilon} \right) + \psi^*\left( \frac{|g|}{\varepsilon} \right) \right]\ d\mu \nonumber \\
& \le 2 \|\xi\|_{\infty} \left[ \varepsilon \int_{\Omega}  \psi(|f_n|)\ d\mu + \eta\{\mathcal{G}_\varepsilon,\delta\} \right] \nonumber \\
& \le 2 \|\xi\|_{\infty} \left( \varepsilon C_1 + \eta\{\mathcal{G}_\varepsilon,\delta\} \right)\ . \label{tl14}
\end{align}
Combining \eqref{tl12}-\eqref{tl14} we end up with
\begin{align}
|I(n)| & \le \left| \int_{\Omega} (f_n-f) g_{\delta} \xi\ d\mu \right| +  \|\xi\|_{\infty} \sup_{\Omega\setminus A_{\delta}}{\{|g_n-g|\}}\ \sup_{m\ge 1}{\{\|f_m\|_1\}} \nonumber \\ 
& \qquad + 4 \|\xi\|_{\infty} \left( \varepsilon C_1 + \eta\{\mathcal{G}_\varepsilon,\delta\} \right)\ . \label{tl11}
\end{align}

Now, we first let $n\to\infty$ in the above inequality and use the weak convergence of $(f_n)_{n\ge 1}$ in $L^1(\Omega)$, the boundedness of $g_{\delta}$, and the uniform convergence \eqref{tl7} to obtain
$$
\limsup_{n\to\infty} \left| \int_{\Omega} (f_n g_n - f g) \xi\ d\mu \right| \le 4 \|\xi\|_{\infty} \left( \varepsilon C_1 + \eta\{\mathcal{G}_\varepsilon,\delta\} \right)\ .
$$
We next use the uniform integrability \eqref{tl101} to pass to the limit as $\delta\to 0$ in the above estimate and find
$$
\limsup_{n\to\infty} \left| \int_{\Omega} (f_n g_n - f g) \xi\ d\mu \right| \le  4 \varepsilon C_1 \|\xi\|_{\infty} \ .
$$
We finally let $\varepsilon\to 0$ to complete the proof. 
\end{proof}

\section{Smoluchowski's coagulation equation}
\label{sec:3}

We now turn to Smoluchowski's coagulation equation 
\begin{eqnarray}
\partial_t f(t,x) & = & \frac{1}{2} \int_0^x K(y,x-y)\ f(t,y)\ f(t,x-y)\ dy \nonumber\\
& & \ - \int_0^\infty K(x,y)\ f(t,x)\ f(t,y)\ dy\,, \quad (t,x)\in (0,\infty)\times (0,\infty)\,, \label{sc1} \\
f(0,x) & = & f^{in}(x)\,, \quad x\in (0,\infty)\,, \label{sc2}
\end{eqnarray}
and collect and derive several properties of its solutions in the next sections. As outlined in the introduction, some of these properties depend heavily on the growth of the coagulation kernel $K$ which is a non-negative and symmetric function. For further use, we introduce the following notation: For $\mu\in\mathbb{R}$, the space of integrable functions with a finite moment of order $\mu$ is denoted by
\begin{equation}
L_\mu^1(0,\infty) := \left\{ g\in L^1(0,\infty)\ : \ \|g\|_{1,\mu} := \int_0^\infty (1+x^\mu) |g(x)|\ dx < \infty \right\}\,, \label{sc3}
\end{equation}
and we define
\begin{equation*}
M_\mu(g) := \int_0^\infty x^\mu g(x)\ dx\,, \quad g\in L_\mu^1(0,\infty)\ . 
\end{equation*}
Note that $L_0^1(0,\infty)=L^1(0,\infty)$ and $\|\cdot\|_{1,0} = \|\cdot\|_1$. Next, for a measurable function $g$ and $x>0$, we set
\begin{eqnarray*}
Q_1(g)(x) & := & \frac{1}{2} \int_0^x K(y,x-y)\ g(y)\ g(x-y)\ dy\ , \\
L(g)(x) & := & \int_0^\infty K(x,y)\ g(y)\ dy\ ,  \qquad Q_2(g)(x) := g(x) L(g(x))\ ,
\end{eqnarray*}
whenever it makes sense. 

\subsection{Existence: Bounded kernels}
\label{sec:31}

The first step towards the existence of solutions to \eqref{sc1}-\eqref{sc2} is to handle the case of bounded coagulation kernels. 

\begin{proposition}\label{pr.sc1}
If there is $\kappa_0>0$ such that 
\begin{equation}
0 \le K(x,y)=K(y,x) \le \kappa_0\,, \quad (x,y)\in (0,\infty)\times (0,\infty)\,, \label{sc7}
\end{equation}
and
\begin{equation}
f^{in} \in L^1(0,\infty)\,, \quad f^{in}\ge 0 \;\text{ a.e. in }\; (0,\infty)\,,\label{sc8}
\end{equation}
then there is a unique global solution $f\in C^1([0,\infty);L^1(0,\infty))$ to \eqref{sc1}-\eqref{sc2} such that 
\begin{equation}
f(t,x)\ge 0 \;\;\text{ for a.e. }\;\; x\in (0,\infty) \;\;\text{ and }\;\; \|f(t)\|_1 \le \|f^{in}\|_1\,, \quad t\ge 0\,. \label{sc8a}
\end{equation}
Furthermore, if $f^{in}\in L_1^1(0,\infty)$, then
\begin{equation}
f(t)\in L_1^1(0,\infty) \;\;\text{ and }\;\; M_1(f(t)) = M_1(f^{in})\,, \quad t\ge 0\,. \label{sc9}
\end{equation}
\end{proposition}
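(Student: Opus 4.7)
The plan is to treat \eqref{sc1} as a quadratic ODE in $L^1(0,\infty)$ and combine a local fixed-point argument with an a priori bound to obtain a global non-negative solution. Writing $F(f) := Q_1(f) - Q_2(f)$, the boundedness \eqref{sc7} gives, via Fubini, the bilinear estimates $\|Q_1(g)\|_1 \le (\kappa_0/2)\|g\|_1^2$, $\|L(g)\|_\infty \le \kappa_0 \|g\|_1$, and $\|Q_2(g)\|_1 \le \kappa_0 \|g\|_1^2$, so $F$ is locally Lipschitz from $L^1$ to $L^1$. This already yields local existence and uniqueness by Cauchy--Lipschitz, but to simultaneously secure non-negativity I would work with the Duhamel/integrating-factor formulation
\begin{equation*}
f(t,x) = f^{in}(x)\, e^{-\int_0^t L(f(s))(x)\,ds} + \int_0^t e^{-\int_s^t L(f(\tau))(x)\,d\tau}\, Q_1(f(s))(x)\, ds ,
\end{equation*}
which is equivalent to \eqref{sc1}--\eqref{sc2} on any interval where $f \in C([0,T]; L^1)$.

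I would set up a Picard iteration for this Duhamel formula in the complete metric subset $\mathcal{X}_T := \{ g \in C([0,T]; L^1(0,\infty)) : g(t,x) \ge 0 \text{ a.e.},\ \|g(t)\|_1 \le 2\|f^{in}\|_1 \}$ of $C([0,T]; L^1)$, starting from $f_0 \equiv 0$. The non-negativity of $f^{in}$, of the exponential weights, and of $Q_1$ on the positive cone propagates inductively to the iterates, while the bilinear estimates recalled above show that the Duhamel operator maps $\mathcal{X}_T$ into itself and is a strict contraction provided $T$ is chosen small enough depending only on $\kappa_0$ and $\|f^{in}\|_1$. The resulting fixed point is the unique local non-negative solution; by the Lipschitz estimate on $F$ it automatically agrees with the $C^1$ solution provided by Cauchy--Lipschitz.

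To extend globally, I would integrate \eqref{sc1} in $x$, which Fubini permits since $f \ge 0$ and $K$ is bounded, and use the symmetry $K(x,y) = K(y,x)$ to exchange the roles of $x$ and $y$ in the gain term, leading to
\begin{equation*}
\frac{d}{dt}\|f(t)\|_1 = -\frac{1}{2} \int_0^\infty \!\!\int_0^\infty K(x,y) f(t,x) f(t,y)\, dy\, dx \le 0 .
\end{equation*}
Hence $\|f(t)\|_1 \le \|f^{in}\|_1$ on the existence interval, and since the local-in-time step depends only on $\|f^{in}\|_1$ this bound rules out blow-up and iterates to give $f \in C^1([0,\infty); L^1(0,\infty))$ together with \eqref{sc8a}.

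Finally, to obtain mass conservation when $M_1(f^{in}) < \infty$, I would first use the bounded subadditive truncation $\vartheta_R(x) := \min\{x, R\}$ in \eqref{in7}, which is admissible because $\vartheta_R \in L^\infty$. The inequality $\vartheta_R(x+y) \le \vartheta_R(x) + \vartheta_R(y)$ and Gronwall give $\int_0^\infty \vartheta_R(x) f(t,x)\, dx \le \int_0^\infty \vartheta_R(x) f^{in}(x)\, dx \le M_1(f^{in})$, and monotone convergence as $R \to \infty$ yields the a priori bound $M_1(f(t)) \le M_1(f^{in})$. With this bound, $(x+y) K(x,y) f(t,x) f(t,y)$ is integrable on $(0,\infty)^2$, so Fubini rigorously justifies \eqref{in7} with the unbounded choice $\vartheta(x) = x$; the integrand vanishes identically and \eqref{sc9} follows. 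The most delicate point in this outline is orchestrating the Picard iteration in $\mathcal{X}_T$ so that non-negativity, the $L^1$ bound, and the contraction property hold simultaneously; once this is in place, the global extension and the moment identity reduce to Fubini together with the algebraic structure of $Q_1$ and $Q_2$.
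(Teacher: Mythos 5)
Your proposal is correct and reaches the same conclusion by the same broad strategy (local well-posedness via Lipschitz structure of $Q_1-Q_2$ on $L^1$, global extension from the monotonicity of $\|f(t)\|_1$, mass conservation via the truncation $\vartheta_A(x)=\min\{x,A\}$), but it handles non-negativity by a genuinely different mechanism. You run a Picard iteration on the Duhamel (integrating-factor) formulation inside the positive cone of $C([0,T];L^1)$, using that $f^{in}\ge0$, the exponential weights, and $Q_1$ all preserve positivity, so the iterates never leave the cone; the contraction estimates for the exponentially weighted operator then require a little care. The paper instead applies Cauchy--Lipschitz directly to the modified vector field $f\mapsto Q_1(f)_+-Q_2(f)$ (still locally Lipschitz on $L^1$), and proves a posteriori that $(-f)_+\equiv0$ by a Gr\"onwall estimate on $\|(-f)_+\|_1$ obtained from the chain rule; once $f\ge0$, the modified equation coincides with the original one. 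The paper's route avoids the Duhamel machinery at the price of the $(-f)_+$ computation; yours is conceptually cleaner in that it never leaves the positive cone. For the mass identity, both arguments first obtain $M_1(f(t))\le M_1(f^{in})$ from the truncated test function and Fatou/monotone convergence. Thereafter the paper keeps $\vartheta_A$ and shows explicitly that the three truncation error terms in its integrated identity vanish as $A\to\infty$, whereas you invoke Fubini directly with the unbounded $\vartheta(x)=x$ once $(x+y)K\,f\,f$ is known to be integrable. That shortcut is legitimate, but to obtain \eqref{in7} for an unbounded $\vartheta$ one should integrate \eqref{sc1} in time and apply Fubini on $[0,t]\times(0,\infty)$, using $\partial_t f = Q_1(f)-Q_2(f)$ and the uniform bound $M_1(f(s))\le M_1(f^{in})$ to dominate $x\,|\partial_s f(s,x)|$; this differentiation-under-the-integral step is left implicit in your outline but is easily supplied.
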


\begin{proof} \textbf{Step~1.} We first consider an initial condition $f^{in}$ satisfying \eqref{sc8} and prove the first statement of Proposition~\ref{pr.sc1}. We note that $Q_1$ and $Q_2$ are locally Lipschitz continuous from $L^1(0,\infty)$ to $L^1(0,\infty)$ with
$$
\|Q_i(f)-Q_i(g)\|_{1} \le \kappa_0 \left( \|f\|_1 + \|g\|_1 \right) \|f-g\|_1
$$
for $(f,g)\in L^1(0,\infty)\times L^1(0,\infty)$ and $i=1,2$. Then, denoting the positive part of a real number $r$ by $r_+:=\max{\{r , 0\}}$, the map $f\mapsto Q_1(f)_+$ is also locally Lipschitz continuous from $L^1(0,\infty)$ to $L^1(0,\infty)$ and it follows from classical results on the well-posedness of differential equations in Banach spaces (see \cite[Theorem~7.6]{Am90} for instance) that there is a unique solution $f\in C^1([0,T_m);L^1(0,\infty))$ defined on the maximal time interval $[0,T_m)$ to the differential equation
\begin{equation}
\frac{df}{dt} = Q_1(f)_+ - Q_2(f)\,, \quad t\in (0,T_m)\,, \label{sc10}
\end{equation}
with initial condition $f(0)=f^{in}$. Since the positive part is a Lipschitz continuous function and $f\in C^1([0,T_m);L^1(0,\infty))$, the chain rule gives 
$$
\partial_t (-f)_+ = - \mathrm{sign}_+(-f)\ \partial_t f\,,
$$
where $\mathrm{sign}_+(r)=1$ for $r\ge 0$ and $\mathrm{sign}_+(r)=0$ for $r<0$. We then infer from \eqref{sc10} that
$$
\partial_t (-f)_+= - \mathrm{sign}_+(-f)\ Q_1(f)_+ + \mathrm{sign}_+(-f)\ Q_2(f) \le (-f)_+\ L(f)
$$
and thus
$$
\frac{d}{dt} \| (-f)_+\|_1 \le \int_0^\infty (-f)_+\ L(f) \le \kappa_0 \|f\|_1 \| (-f)_+\|_1\,.
$$
Since $(-f)_+(0)=(-f^{in})_+=0$, we readily deduce that $(-f)_+(t)=0$ for all $t\in [0,T_m)$, that is, $f(t)\ge 0$ a.e. in $(0,\infty)$. Consequently, $Q_1(f)_+=Q_1(f)$ and it follows from \eqref{sc10} that $f$ is a solution to \eqref{sc1}-\eqref{sc2} defined for $t\in [0,T_m)$. To show that $T_m=\infty$, it suffices to notice that, thanks to the just established non-negativity of $f$, Fubini's theorem gives
\begin{align*}
\frac{d}{dt} \| f(t)\|_1 & = \int_0^\infty [ Q_1(f)(t,x) - Q_2(f)(t,x) ]\ dx \\
& = - \frac{1}{2} \int_0^\infty \int_0^\infty K(x,y) f(t,x) f(t,y)\ dydx \le 0\,,
\end{align*}
for $t\in [0,T_m)$, which prevents the blowup in finite time of the $L^1$-norm of $f$ and thereby guarantees that $T_m=\infty$. 

\noindent\textbf{Step~2.} A straightforward consequence of Fubini's theorem is the following identity for any $\vartheta\in L^\infty(0,\infty)$:
\begin{equation}
\frac{d}{dt} \int_0^\infty \vartheta(x) f(t,x)\ dx = \frac{1}{2} \int_0^\infty \int_0^\infty \tilde{\vartheta}(x,y) K(x,y) f(t,x) f(t,y)\ dydx \label{sc11}
\end{equation}
where
\begin{equation}
\tilde{\vartheta}(x,y) := \vartheta(x+y) -\vartheta(x) - \vartheta(y)\,, \quad (x,y)\in (0,\infty)\times (0,\infty)\,. \label{sc12}
\end{equation}
As a consequence of \eqref{sc11} (with $\vartheta\equiv 1$), we recover the already observed monotonicity of $t\mapsto M_0(f(t))$ and complete the proof of \eqref{sc8a}. 

\noindent\textbf{Step~3.} We now turn to an initial condition $f^{in}$ having a finite first moment and aim at proving \eqref{sc9}. Formally, \eqref{sc9} follows from \eqref{sc11} with the choice $\vartheta(x)=x$ since $\tilde{\vartheta}\equiv 0$ in that case. However, $\mathrm{id}: x\mapsto x$ does not belong to $L^\infty(0,\infty)$ and an approximation argument is required to justify \eqref{sc9}. More precisely, given $A>0$, define $\vartheta_A(x) := \min\{x,A\}$ for $x>0$. The corresponding function $\tilde{\vartheta}_A$ given by \eqref{sc12} satisfies
\begin{equation}
\tilde{\vartheta}_A(x,y) = \left\{
\begin{array}{lcl}
0 & \mbox{ if } & 0 \le x + y \le A\,, \\
A - x - y & \mbox{ if } & 0 \le \max\{x,y\} \le A < x+y \,, \\
- \min\{x,y\}  & \mbox{ if } & 0 \le \min\{x,y\} \le A < \max\{x,y\}  < x+y \,, \\
- A & \mbox{ if } & A \le \min\{x,y\} \,.
\end{array}
\right. \label{sc13}
\end{equation}
In particular $\tilde{\vartheta}_A \le 0$ and it follows from \eqref{sc11} that
$$
\int_0^\infty \vartheta_A(x) f(t,x)\ dx \le \int_0^\infty \vartheta_A(x) f^{in}(x)\ dx \le M_1(f^{in})\,, \quad t\ge 0\,.
$$
Since $\vartheta_A\to \mathrm{id}$ as $A\to\infty$, the Fatou lemma entails that 
\begin{equation}
M_1(f(t))\le M_1(f^{in})\,, \quad t\ge 0\,, \label{sc14}
\end{equation} 
and thus that $f(t)\in L_1^1(0,\infty)$ for all $t\ge 0$. To prove the conservation of matter, we use again \eqref{sc11} with $\vartheta=\vartheta_A$ and find
\begin{align}
\int_0^\infty \vartheta_A(x) f(t,x)\ dx & - \int_0^\infty \vartheta_A(x) f^{in}(x)\ dx \nonumber \\
& = - \int_0^t \left( I_1(s,A) + I_2(s,A) + I_3(s,A) \right)\ dx \label{sc15}  
\end{align}
with
\begin{align*}
I_1(s,A) & := \frac{1}{2} \int_0^A \int_{A-x}^A (x+y-A) K(x,y) f(s,x) f(s,y)\ dydx\,, \\
I_2(s,A) & := \int_0^A \int_{A}^\infty x K(x,y) f(s,x) f(s,y)\ dydx\,, \\
I_3(s,A) & := \frac{A}{2} \int_A^\infty \int_A^\infty K(x,y) f(s,x) f(s,y)\ dydx\,.
\end{align*}
On the one hand, it readily follows from \eqref{sc7} and \eqref{sc14} that
\begin{equation}
I_2(s,A) + I_3(s,A) \le \frac{\kappa_0}{A} M_1(f(s))^2 \le  \frac{\kappa_0}{A} M_1(f^{in})^2\,. \label{sc16}
\end{equation}
On the other hand, by \eqref{sc7}, 
\begin{align*}
0 \le I_1(s,A) & \le \frac{\kappa_0}{2} \int_0^A \int_{A-x}^A y f(s,x) f(s,y)\ dydx \\ 
& \le \frac{\kappa_0}{2} \int_0^\infty \int_0^\infty \mathbf{1}_{(0,A)}(x) \mathbf{1}_{(A,\infty)}(x+y) y f(s,x) f(s,y)\ dydx\,.
\end{align*}
Owing to \eqref{sc8a} and \eqref{sc14}, the Lebesgue dominated convergence theorem guarantees that
\begin{equation}
\lim_{A\to\infty} \int_0^t I_1(s,A)\ ds = 0\,. \label{sc17} 
\end{equation}
Thanks to \eqref{sc16} and \eqref{sc17} we may pass to the limit as $A\to\infty$ in \eqref{sc15} and conclude that $M_1(f(t))=M_1(f^{in})$ for $t\ge 0$. This completes the proof. 
\end{proof}

\begin{remark}\label{re.sc1}
Another formal consequence of \eqref{sc11} is that, whenever it makes sense, $t\mapsto M_\mu(f(t))$ is non-increasing for $\mu\in (-\infty,1)$ and non-decreasing for $\mu\in (1,\infty)$. Similarly, 
$$
t \mapsto \int_0^\infty \left( e^{\alpha x} - 1 \right) f(t,x)\ dx \;\;\text{ is non-decreasing for }\;\; \alpha>0\,.
$$
\end{remark}

\subsection{Existence: Unbounded kernels}
\label{sec:32}

As already mentioned, most of the coagulation rates encountered in the literature are unbounded and grow without bound as $(x,y)\to\infty$ or as $(x,y)\to (0,0)$. In that case, there does not seem to be a functional framework in which $Q_1$ and $Q_2$ are locally Lipschitz continuous and implementing a fixed point procedure does not seem to be straightforward. A different approach is then required and we turn to a compactness method which can be summarized as follows:
\begin{enumerate}
\item Build a sequence of approximations of the original problem which depends on a parameter $n\ge 1$, for which the existence of a solution is simple to show, and which converges in some sense to the original problem as $n\to \infty$.
\item Derive estimates which are independent of $n\ge 1$ and guarantee the compactness with respect to the size variable $x$ and the time variable $t$ of the sequence of solutions to the approximations.
\item Show convergence as $n\to\infty$.
\end{enumerate}

To be more precise, let $K$ be a non-negative and symmetric locally bounded function and consider an initial condition 
\begin{equation}
f^{in} \in L_1^1(0,\infty)\,, \quad f^{in}\ge 0 \;\text{ a.e. in }\; (0,\infty)\,.\label{sc18}
\end{equation}
Given an integer $n\ge 1$, a natural approximation is to truncate the coagulation kernel $K$ and define
\begin{equation}
K_n(x,y) := \min\{ K(x,y) , n \}\,, \quad (x,y)\in (0,\infty)\times (0,\infty)\,. \label{sc19}
\end{equation}
Clearly, $K_n$ is a non-negative, bounded, and symmetric function and we infer from \eqref{sc18} and Proposition~\ref{pr.sc1} that the initial-value problem \eqref{sc1}-\eqref{sc2} with $K_n$ instead of $K$ has a unique non-negative solution $f_n\in C^1([0,\infty);L^1(0,\infty))$ which satisfies
\begin{equation}
M_0(f_n(t)) \le M_0(f^{in}) \;\;\mbox{ and }\;\; M_1(f_n(t)) = M_1(f^{in})\,, \quad t\ge 0\,. \label{sc20}
\end{equation}
The compactness properties provided by the previous estimates are rather weak and the next step is to identify an appropriate topology for the compactness approach to work. A key observation in that direction is that, though being nonlinear, equation \eqref{sc1} is a nonlocal quadratic equation, in the sense that it does not involve nonlinearities of the form $f(t,x)^2$ but of the form $f(t,x) f(t,y)$ with $x\ne y$. While the former requires convergence in a strong topology to pass to the limit, the latter complies well with weak topologies. As first noticed in \cite{St89} the weak topology of $L^1$ turns out to be a particularly well-suited framework to prove the existence of solutions to \eqref{sc1}-\eqref{sc2} for several classes of unbounded coagulation kernels. In the remainder of this section, we will show how to use the tools described in Section~\ref{sec:2} to achieve this goal.  

\subsubsection{Sublinear kernels}
\label{sec:321}

We first consider the case of locally bounded coagulation kernels with a sublinear growth at infinity. More precisely, we assume that there is $\kappa>0$ such that
\begin{eqnarray}
0 & \le & K(x,y) = K(y,x) \le \kappa (1+x) (1+y)\,, \quad (x,y)\in (0,\infty)\times (0,\infty)\,, \label{sc21} \\
\omega_R(y) & := & \sup_{x\in (0,R)} \frac{K(x,y)}{y} \mathop{\longrightarrow}_{y\to\infty} 0\,. \label{sc22}
\end{eqnarray}

The following existence result is then available, see \cite{LM02, LT81, No99, Sp84}.

\begin{theorem}\label{th.sc1}
Assume that the coagulation kernel $K$ satisfies \eqref{sc21}-\eqref{sc22} and consider an initial condition $f^{in}$ satisfying \eqref{sc18}. There is a non-negative function $$
f\in C([0,\infty);L^1(0,\infty)) \cap L^\infty(0,\infty;L_1^1(0,\infty))
$$ 
such that 
\begin{align}
& \int_0^\infty \vartheta(x) \left( f(t,x) - f^{in}(x) \right)\ dx \nonumber \\ 
& \qquad =  \frac{1}{2} \int_0^t \int_0^\infty \int_0^\infty \tilde{\vartheta}(x,y) K(x,y) f(s,x) f(s,y)\ dydxds \label{sc23}
\end{align}
for all $t>0$ and $\vartheta\in L^\infty(0,\infty)$ (with $\tilde{\vartheta}$ given by \eqref{sc12}) and
\begin{equation}
M_0(f(t)) \le M_0(f^{in}) \;\;\mbox{ and }\;\; M_1(f(t)) \le M_1(f^{in})\,, \quad t\ge 0\,. \label{sc24}
\end{equation}
\end{theorem}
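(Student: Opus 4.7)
The plan is to implement the compactness scheme outlined in Section~\ref{sec:32}. I would introduce the bounded truncated kernels $K_n(x,y) := \min\{K(x,y), n\}$ and apply Proposition~\ref{pr.sc1} to obtain unique non-negative global solutions $f_n \in C^1([0,\infty); L^1(0,\infty))$ to the Cauchy problem with kernel $K_n$, satisfying $M_0(f_n(t)) \le M_0(f^{in})$ and $M_1(f_n(t)) = M_1(f^{in})$ for all $t\ge 0$. This yields uniform control of $(f_n)_{n\ge 1}$ in $L^\infty(0,\infty; L_1^1(0,\infty))$, which is the starting point for everything else.

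\textbf{Step 2 (Weak $L^1$-compactness and extraction).} Next, I would verify the Dunford-Pettis criteria (Theorem~\ref{thb103}) for $\{f_n(t): n\ge 1, t\in[0,T]\}$. The tightness condition~\eqref{b105} is immediate from the $M_1$-bound via Markov's inequality. To establish the no-concentration condition~\eqref{b104}, apply Corollary~\ref{cob119} to $f^{in}$ to obtain $\Phi \in \mathcal{C}_{VP,\infty}$ with $\Phi(f^{in}) \in L^1(0,\infty)$, and propagate a uniform bound on $\int_0^\infty (1+x)\Phi(f_n(t,x))\, dx$ in time; this is done by computing its derivative (justified since $K_n$ is bounded), symmetrizing via $K_n(x,y)=K_n(y,x)$, and exploiting the convexity identities of Proposition~\ref{prb121} — notably~\eqref{b123b} and~\eqref{b125} — together with $K_n\le \kappa(1+x)(1+y)$ and the moment bounds to close a Gronwall estimate. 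Theorem~\ref{thb106} then yields the sought uniform integrability. Combined with the time equicontinuity estimate
\begin{equation*}
\left| \int_0^\infty \vartheta(x) \left( f_n(t,x) - f_n(s,x) \right)\, dx \right| \le C\, \|\vartheta\|_\infty\, |t-s|,
\end{equation*}
which follows directly from~\eqref{sc11}, $K_n \le \kappa(1+x)(1+y)$, and the $L_1^1$-bound, a weak-topology Arzelà-Ascoli argument extracts a subsequence (not relabeled) and a limit $f \in L^\infty(0,\infty; L_1^1(0,\infty))$ with $f_n(t) \rightharpoonup f(t)$ in $L^1(0,\infty)$ for every $t\ge 0$, uniformly on compact time intervals when tested against any fixed $\vartheta \in L^\infty(0,\infty)$.

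\textbf{Step 3 (Limit passage).} The main obstacle is identifying the limit of the bilinear term on the right-hand side of~\eqref{sc23}. Fix first $\vartheta \in L^\infty(0,\infty)$ with compact support in $(0,R)$, so that $\tilde\vartheta$ is bounded and supported in $\{\min\{x,y\} \le R\}$. By symmetry, it suffices to control the region $x \le R$, where I would split the $y$-integration at level $A$. For $y \ge A$, the sublinearity assumption~\eqref{sc22} gives $K(x,y) \le y\omega_R(y)$ uniformly in $x < R$, so this contribution is bounded by $C(R,\|\vartheta\|_\infty) \int_0^t \int_A^\infty \omega_R(y)\, y\, f_n(s,y)\, dy\, ds$, which vanishes as $A\to\infty$ uniformly in $n$ by the $M_1$-bound and Lebesgue's dominated convergence. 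On the bounded region $(0,R)\times(0,A)$, $K$ is bounded and $K_n \to K$ pointwise, and the bilinear form $\iint \psi(x,y) f_n(s,x) f_n(s,y)\, dx\, dy$ passes to the limit by first treating tensor products $\psi=\psi_1\otimes \psi_2$, for which it factors into $\left(\int \psi_1 f_n\right)\left(\int \psi_2 f_n\right)$ — each factor converging pointwise in $s$ by the weak $L^1$-convergence and being uniformly bounded — and then extending by density (Stone-Weierstrass) using the uniform integrability from Step~2, while the time integration is handled by dominated convergence. This delivers~\eqref{sc23} for compactly supported $\vartheta$, and the tightness at infinity extends it to $\vartheta \in L^\infty(0,\infty)$. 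The bounds~\eqref{sc24} are inherited by weak lower semicontinuity, and the strong continuity $f \in C([0,\infty); L^1(0,\infty))$ follows from the weak continuity combined with continuity of $t \mapsto M_0(f(t))$ via Vitali's theorem~\ref{thd100}.
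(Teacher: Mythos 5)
Your Step~1 and the framework of Step~2 (Dunford--Pettis via a superlinear $\Phi$ from de la Vall\'ee Poussin, tightness from the $M_1$-bound, time equicontinuity, weak Arzel\`a--Ascoli) are exactly the paper's strategy. But there are two genuine gaps and one smaller fixable issue.

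\textbf{The Gronwall estimate in Step~2 does not close.} You propose to propagate a uniform bound on $\int_0^\infty (1+x)\Phi(f_n(t,x))\,dx$. After discarding the loss term, writing the gain term via Fubini, and applying \eqref{b123b}, the gain is controlled by
$$
\frac{\kappa}{2}\int_0^\infty\!\!\int_0^\infty (1+x+y)(1+x)(1+y)\left[\Phi(f_n(t,x))+\Phi(f_n(t,x+y))\right]f_n(t,y)\,dy\,dx\ .
$$
Both pieces produce the weight $(1+x)^2\Phi(f_n(t,x))$ rather than $(1+x)\Phi(f_n(t,x))$, and the first also requires $M_2(f_n)$; neither can be absorbed into $\int (1+x)\Phi(f_n)$ via Gronwall when $K$ grows like $(1+x)(1+y)$. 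This is precisely why the paper (Lemma~\ref{le.sc1}) bounds only the \emph{local} quantity $\int_0^R\Phi(f_n(t,x))\,dx$, with a constant $C_1(R)$ depending on $R$, and obtains tightness at infinity separately from the $M_1$-bound. The weighted global estimate you propose is the strategy the paper reserves for the more restrictive linear-growth condition \eqref{sc41} (Lemma~\ref{le.sc3}), and even there it is applied to the \emph{moment} $\int\psi(1+x)f_n\,dx$, not to $\int(1+x)\Phi(f_n)\,dx$.

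\textbf{The strong continuity argument in Step~3 is invalid.} Weak continuity $f(s)\rightharpoonup f(t)$ in $L^1$, non-negativity of $f$, and continuity of $t\mapsto M_0(f(t))$ do \emph{not} imply strong $L^1$-continuity, and Vitali's theorem (Theorem~\ref{thd100}) does not apply because there is no a.e. convergence of $f(s,\cdot)$ to $f(t,\cdot)$ to invoke. A standard counterexample: $h_n(x):=1+\sin(nx)$ on $(0,1)$ satisfies $h_n\ge 0$, $h_n\rightharpoonup 1$, and $\int_0^1 h_n\to 1=\int_0^1 1$, yet $\|h_n-1\|_1\to 2/\pi\neq 0$. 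The paper instead derives Lipschitz continuity directly from the weak formulation \eqref{sc23}: for $\vartheta\in L^\infty$ with $\|\vartheta\|_\infty\le 1$, one bounds $|\int(f(t)-f(s))\vartheta\,dx|\le C|t-s|$ using \eqref{sc21} and the $L_1^1$-bound \eqref{sc24}, and takes the supremum over $\vartheta$ to get $\|f(t)-f(s)\|_1\le C|t-s|$.

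\textbf{A smaller point on Step~3.} Your Stone--Weierstrass density argument requires $\tilde\vartheta K$ to be continuous on $[0,R]\times[0,A]$, which fails when $\vartheta$ is only in $L^\infty$. The paper handles the limit passage in $I_{1,n}$ via Proposition~\ref{prd2}, which only needs $K_n\to K$ a.e. and uniform boundedness of $K_n$ on the compact region; it is robust to the lack of continuity. Your argument can be repaired by first taking $\vartheta$ continuous, or, better, by quoting Proposition~\ref{prd2} as the paper does (together with the standard fact that $f_n(s)\otimes f_n(s)\rightharpoonup f(s)\otimes f(s)$ in $L^1((0,R_0)^2)$, obtained from tensor test functions plus Dunford--Pettis).
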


On the one hand, Theorem~\ref{th.sc1} excludes the two important (and borderline) cases $K_1(x,y)=x+y$ and $K_2(x,y)=xy$ which will be handled in Section~\ref{sec:322} and Section~\ref{sec:323}, respectively. On the other hand, owing to the possible occurrence of the gelation phenomenon already mentioned in the introduction, it is not possible to improve the second inequality in \eqref{sc24} to an equality in general. 

We now turn to the proof of Theorem~\ref{th.sc1}: for $n\ge 1$ define $K_n$ by \eqref{sc19} and let 
$$
f_n\in C^1([0,\infty);L^1(0,\infty))
$$ 
be the non-negative solution to \eqref{sc1}-\eqref{sc2} with $K_n$ instead of $K$ which satisfies \eqref{sc20}. To prove the weak compactness in $L^1(0,\infty)$ of $(f_n(t))_{n\ge 1}$ for each $t\ge 0$, we aim at using the Dunford-Pettis theorem (Theorem~\ref{thb103}). To this end, we shall study the behaviour of $(f_n(t))_{n\ge 1}$ on sets with small measure and for large values of $x$. Owing to \eqref{sc22}, we shall see below that the boundedness of $(M_1(f_n(t)))_{n\ge 1}$ guaranteed by \eqref{sc20} is sufficient to control the behaviour for large $x$. We are left with the behaviour on sets with small measure which we analyze in the next lemma.

\begin{lemma}\label{le.sc1}
Let $\Psi\in\mathcal{C}_{VP}$ be such that $\Psi(f^{in})\in L^1(0,\infty)$, the set $\mathcal{C}_{VP}$ being defined in Definition~\ref{defcvp}. For each $R>0$, there is $C_1(R)>0$ depending only on $K$, $f^{in}$, and $R$ such that
\begin{equation}
\int_0^R \Psi(f_n(t,x))\ dx \le \left( \int_0^R \Psi(f^{in}(x))\ dx \right) e^{C_1(R)t} \,, \quad t\ge 0\,, \quad n\ge 1\,. \label{sc25}
\end{equation}
\end{lemma}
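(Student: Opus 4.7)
The plan is to run a Gronwall argument on $t\mapsto \int_0^R \Psi(f_n(t,x))\,dx$, using the key inequality \eqref{b123b} from Proposition~\ref{prb121} to tame the quadratic coagulation gain term on a bounded size interval. Formally, differentiating in $t$ and using \eqref{sc10}--the equation solved by $f_n$ with $K_n$ in place of $K$--gives
\begin{equation*}
\frac{d}{dt} \int_0^R \Psi(f_n(t,x))\,dx = \int_0^R \Psi'(f_n) \bigl[Q_1(f_n) - Q_2(f_n)\bigr]\,dx \le \int_0^R \Psi'(f_n)\,Q_1(f_n)\,dx,
\end{equation*}
where the last inequality uses $\Psi'\ge 0$ (from $\Psi\in \mathcal{C}_{VP}$) and $Q_2(f_n)=f_n\,L(f_n)\ge 0$ (from $f_n\ge 0$ and $K_n\ge 0$).

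For the $Q_1$-term, I would expand it via Fubini, bound $K_n(y,x-y)\le \kappa(1+R)^2=:\kappa_R$ on $(y,x-y)\in (0,R)^2$ (a bound independent of $n$ coming from \eqref{sc21}), and then apply \eqref{b123b} in the form $f_n(y)\,\Psi'(f_n(x)) \le \Psi(f_n(x)) + \Psi(f_n(y))$. This yields
\begin{align*}
\int_0^R \Psi'(f_n(x))\,Q_1(f_n)(x)\,dx &\le \frac{\kappa_R}{2} \int_0^R \int_0^x \bigl[\Psi(f_n(x)) + \Psi(f_n(y))\bigr]\, f_n(x-y)\,dy\,dx \\
&\le \kappa_R\, M_0(f_n(t)) \int_0^R \Psi(f_n(t,x))\,dx,
\end{align*}
after swapping the order of integration in the second piece. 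Since $M_0(f_n(t))\le M_0(f^{in})$ by \eqref{sc20}, Gronwall's lemma with $C_1(R):=\kappa(1+R)^2 M_0(f^{in})$ will deliver \eqref{sc25}.

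The main obstacle is justifying the formal differentiation: since $\Psi$ may grow quadratically (Remark~\ref{reb128}) while $f_n(t,\cdot)$ is only known to lie in $L^1$, we have no a priori control on $\Psi(f_n(t,\cdot))$ in $L^1_{\mathrm{loc}}$. I would handle this by truncation: for $A\ge 1$, define the $C^1$ convex function
\begin{equation*}
\Psi_A(r) := \Psi(r)\ \text{for}\ r\le A,\qquad \Psi_A(r) := \Psi(A) + \Psi'(A)(r-A)\ \text{for}\ r>A.
\end{equation*}
Then $0\le \Psi_A\le \Psi$, $\Psi_A'$ is bounded by $\Psi'(A)$, and $\Psi_A(r)\uparrow \Psi(r)$ monotonically as $A\to\infty$. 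Crucially, one checks directly, using \eqref{b123b} and \eqref{b123} applied to $\Psi$ at the point $A$, that $\Psi_A$ inherits the inequality $s\Psi_A'(r)\le \Psi_A(r)+\Psi_A(s)$ for all $r,s\ge 0$. Because $\Psi_A(f_n)$ and $\Psi_A'(f_n)$ are now dominated by linear functions of $f_n\in C^1([0,\infty);L^1(0,\infty))$, the chain-rule differentiation above is rigorous for $\Psi_A$, and the Gronwall argument yields
\begin{equation*}
\int_0^R \Psi_A(f_n(t,x))\,dx \le \Bigl(\int_0^R \Psi_A(f^{in}(x))\,dx\Bigr)\, e^{C_1(R) t} \le \Bigl(\int_0^R \Psi(f^{in}(x))\,dx\Bigr)\, e^{C_1(R) t}
\end{equation*}
with the same $C_1(R)$, uniformly in $A$. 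Letting $A\to\infty$ and invoking monotone convergence then delivers \eqref{sc25} for $\Psi$ itself, completing the proof.
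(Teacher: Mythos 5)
Your proof follows essentially the same strategy as the paper: drop the nonpositive depletion term, bound $K_n\le\kappa(1+R)^2$ on the relevant triangle, invoke the key inequality \eqref{b123b} to linearize the $\Psi'\cdot f_n$ product, and close with Gronwall. The only bookkeeping difference is that you apply \eqref{b123b} to the pair $(f_n(y),f_n(x))$ and leave $f_n(x-y)$ as the $L^1$ factor, while the paper applies it to $(f_n(x-y),f_n(x))$ and leaves $f_n(y)$; by the $y\leftrightarrow x-y$ symmetry of the gain term this is immaterial. Your constant $\kappa(1+R)^2M_0(f^{in})$ is sharper than the paper's $2\kappa(1+R)^2M_0(f^{in})$ only because the paper generously drops the prefactor $\tfrac12$ before Fubini; either constant closes Gronwall.

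What you add, and what the paper omits, is a justification of the formal differentiation of $t\mapsto\int_0^R\Psi(f_n(t,x))\,dx$. This is a genuine gap: $f_n$ is only known to lie in $C^1([0,\infty);L^1(0,\infty))$, and since $\Psi$ can grow quadratically (Remark~\ref{reb128}) there is no a priori reason for $\Psi(f_n(t,\cdot))$ to be locally integrable, let alone for the chain rule to hold in $L^1$. Your truncation $\Psi_A$ (quadratic on $[0,A]$, affine on $(A,\infty)$) resolves this cleanly: it has a bounded, globally Lipschitz derivative, so $\Psi_A(f_n)\in C^1([0,\infty);L^1(0,R))$ and the differential inequality is rigorous. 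The one nonobvious step is checking that $\Psi_A$ inherits the inequality $s\Psi_A'(r)\le\Psi_A(r)+\Psi_A(s)$ uniformly in $A$, which you assert; it does hold, by a short case analysis on whether $r$ and $s$ exceed $A$, using \eqref{b123b} and \eqref{b123} for $\Psi$ together with the monotonicity of $\Psi'$. With that, the bound is uniform in $A$ and monotone convergence passes it to $\Psi$. This is a worthwhile improvement in rigor over the paper's presentation, not a different proof.
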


\begin{proof} Fix $R>0$. Since $\Psi'$, $K_n$, and $f_n$ are non-negative functions and $K_n\le K$, we infer from \eqref{sc1} and Fubini's theorem that 
\begin{align*}
\frac{d}{dt} \int_0^R \Psi(f_n(t,x))\ dx & \le \frac{1}{2}\ \int_0^R \int_0^x K(x-y,y) f_n(t,x-y) f_n(t,y) \ dy\ \Psi'(f_n(t,x))\ dx \\
& \le \int_0^R \int_y^R K(x-y,y) f_n(t,x-y)\ \Psi'(f_n(t,x))\ dx\ f_n(t,y) \ dy \,.
\end{align*}
Since $\Psi\in\mathcal{C}_{VP}$ we deduce from \eqref{b123b}, \eqref{sc20}, and \eqref{sc21} that
\begin{align*}
\frac{d}{dt} \int_0^R \Psi(f_n(t,x))\ dx & \le \int_0^R \int_y^R K(x-y,y) \Psi(f_n(t,x-y))\ dx\ f_n(t,y) \ dy \\
& \qquad + \int_0^R \int_y^R K(x-y,y) \Psi(f_n(t,x))\ dx\ f_n(t,y) \ dy \\
& \le 2 \kappa (1+R)^2\ M_0(f_n)\ \int_0^R \Psi(f_n(t,x))\ dx \\
& \le 2 \kappa (1+R)^2\ M_0(f^{in})\ \int_0^R \Psi(f_n(t,x))\ dx\,.
\end{align*}
Setting $C_1(R) := 2 \kappa (1+R)^2\ M_0(f^{in})$ we obtain \eqref{sc25} after integration with respect to time. 
\end{proof}

The next step towards the proof of Theorem~\ref{th.sc1} is the time equicontinuity of the sequence $(f_n)_{n\ge 1}$ which we prove now.

\begin{lemma}\label{le.sc2}
There is $C_2>0$ depending only on $K$ and $f^{in}$ such that
\begin{equation}
\left\| f_n(t) - f_n(s) \right\|_1 \le C_2 (t-s)\,, \quad 0\le s\le t\,, \quad n\ge 1\,. \label{sc26}
\end{equation}
\end{lemma}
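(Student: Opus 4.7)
The plan is to use the differential equation for $f_n$ to express $f_n(t)-f_n(s)$ as a time integral of $Q_{1,n}(f_n)-Q_{2,n}(f_n)$, and then to control the $L^1$-norm of the integrand uniformly in time and in $n$ by combining the kernel bound \eqref{sc21} with the moment estimates \eqref{sc20}. Since $f^{in}\in L^1_1(0,\infty)$ by \eqref{sc18}, both $M_0(f^{in})$ and $M_1(f^{in})$ are finite, which is precisely what is needed for the linear growth of $K$ to yield an integrable total coagulation rate.

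More concretely, since $f_n\in C^1([0,\infty);L^1(0,\infty))$ solves \eqref{sc1}--\eqref{sc2} with $K_n$ in place of $K$, I would write
$$f_n(t,x)-f_n(s,x) = \int_s^t \bigl[Q_{1,n}(f_n)(\tau,x) - Q_{2,n}(f_n)(\tau,x)\bigr]\,d\tau,$$
where $Q_{1,n}$ and $Q_{2,n}$ denote the gain and loss operators associated with $K_n$. Taking the $L^1$-norm in $x$, applying the triangle inequality, and using that $Q_{i,n}(f_n)\ge 0$ pointwise (which follows from the non-negativity of $K_n$ and $f_n$ established in Proposition~\ref{pr.sc1}), I get
$$\|f_n(t)-f_n(s)\|_1 \le \int_s^t \int_0^\infty \bigl(Q_{1,n}(f_n) + Q_{2,n}(f_n)\bigr)(\tau,x)\,dx\,d\tau.$$

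Next, Fubini's theorem (justified by the positivity of the integrands) yields
$$\int_0^\infty Q_{1,n}(f_n)(\tau,x)\,dx = \tfrac12\int_0^\infty\!\!\int_0^\infty K_n(x,y) f_n(\tau,x) f_n(\tau,y)\,dy\,dx$$
and
$$\int_0^\infty Q_{2,n}(f_n)(\tau,x)\,dx = \int_0^\infty\!\!\int_0^\infty K_n(x,y) f_n(\tau,x) f_n(\tau,y)\,dy\,dx,$$
so the integrand in $\tau$ is at most $\tfrac32$ times the second double integral. Using $K_n\le K\le \kappa(1+x)(1+y)$ from \eqref{sc21} and then the estimates $M_0(f_n(\tau))\le M_0(f^{in})$ and $M_1(f_n(\tau))=M_1(f^{in})$ from \eqref{sc20}, I bound
$$\int_0^\infty\!\!\int_0^\infty K_n(x,y) f_n(\tau,x) f_n(\tau,y)\,dy\,dx \le \kappa\bigl(M_0(f^{in}) + M_1(f^{in})\bigr)^2 = \kappa\|f^{in}\|_{1,1}^2.$$

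Putting everything together and setting $C_2 := \tfrac{3\kappa}{2}\|f^{in}\|_{1,1}^2$ gives the claimed inequality \eqref{sc26}. There is no real obstacle in this argument: the crucial point is simply that the linear growth of the coagulation kernel is matched exactly by the finiteness of the first moment of $f^{in}$ and the conservation/monotonicity of the first two moments recorded in \eqref{sc20}, so everything reduces to a Fubini computation followed by a two-line estimate.
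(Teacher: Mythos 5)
Your proof is correct and follows essentially the same route as the paper: both bound the $L^1$-norm of $\partial_t f_n$ by $\tfrac{3}{2}\iint K_n f_n f_n$ via Fubini and the triangle inequality, then apply the growth bound \eqref{sc21} together with the moment estimates \eqref{sc20} to obtain the uniform constant $C_2 = \tfrac{3\kappa}{2}\|f^{in}\|_{1,1}^2$. The only cosmetic difference is that you write the time integral before estimating, whereas the paper bounds $\|\partial_t f_n(t)\|_1$ pointwise in $t$ and then integrates; these are the same argument.
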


\begin{proof} Fix $R>0$. We infer from \eqref{sc1}, Fubini's theorem, \eqref{sc20}, and \eqref{sc21} that
\begin{align*}
\left\| \partial_t f_n(t) \right\|_1 & \le \frac{1}{2} \int_0^\infty \int_y^\infty K(y,x-y) f_n(t,x) f_n(t,y)\ dxdy \\
& \qquad + \int_0^\infty \int_0^\infty K(x,y) f_n(t,x) f_n(t,y)\ dydx \\
& \le \frac{3\kappa}{2} \int_0^\infty \int_0^\infty (1+x) (1+y) f_n(t,x) f_n(t,y)\ dydx \\
& \le \frac{3\kappa}{2} \|f_n(t)\|_{1,1}^2 \le C_2 := \frac{3\kappa}{2} \|f^{in}\|_{1,1}^2\,,
\end{align*}
from which \eqref{sc26} readily follows. 
\end{proof}

We are now in a position to complete the proof of Theorem~\ref{th.sc1}.

\begin{proof}[Proof of Theorem~\ref{th.sc1}] We first recall that, owing to the de la Vall\'ee Poussin theorem (in the form stated in Corollary~\ref{cob119}), the integrability of $f^{in}$ ensures that there is $\Phi\in\mathcal{C}_{VP,\infty}$ such that
\begin{equation}
\int_0^\infty \Phi(f^{in}(x))\ dx < \infty\,. \label{sc27}
\end{equation}
We then combine Lemma~\ref{le.sc1} (with $\Psi=\Phi$) and  \eqref{sc27} to conclude that, for each $t\ge 0$, $n\ge 1$, and $R>0$, 
\begin{equation}
\int_0^R \Phi(f_n(t,x))\ dx \le \|\Phi(f^{in})\|_1 e^{C_1(R) t}\ , \label{sc30}
\end{equation}
where $C_1(R)$ only depends on $K$, $f^{in}$, and $R$.

\smallskip

\noindent\textbf{Step~1: Weak compactness.} According to a variant of the Arzel\`a-Ascoli theorem (see \cite[Theorem~1.3.2]{Vr95} for instance), the sequence $(f_n)_{n\ge 1}$ is relatively sequentially compact in $C([0,T];w-L^1(0,\infty))$ for every $T>0$ if it enjoys the following two properties:
\begin{equation}
\text{ The sequence $(f_n(t))_{n\ge 1}$ is weakly compact in $L^1(0,\infty)$ for each $t \ge 0$,} \label{sc28}
\end{equation}
and:
\begin{equation}
\text{ The sequence $(f_n)_{n\ge 1}$ is weakly equicontinuous in $L^1(0,\infty)$ at every $t\ge 0$,} \label{sc29}
\end{equation}
see \cite[Definition~1.3.1]{Vr95}. Recall that the space $C([0,\infty);w-L^1(0,\infty))$ is the space of functions $h$ which are continuous in time with respect to the weak topology of $L^1(0,\infty)$, that is, 
$$
t \mapsto \int_0^\infty \vartheta(x) h(t,x)\ dx \in C([0,\infty)) \;\;\text{ for all }\;\; \vartheta\in L^\infty(0,\infty)\,.
$$

We first prove \eqref{sc28}. To this end, we recall that we have already established \eqref{sc30} and note that \eqref{sc20} entails that
\begin{equation}
\int_R^\infty f_n(t,x)\ dx \le \frac{1}{R} \int_R^\infty x f_n(t,x)\ dx \le \frac{M_1(f^{in})}{R}\ . \label{sc31}
\end{equation}
Since $\Phi\in C_{VP,\infty}$, the properties \eqref{sc30} and \eqref{sc31} imply that the sequence $(f_n(t))_{n\ge 1}$ is uniformly integrable in $L^1(0,\infty)$ for each $t\ge 0$ while \eqref{sc31} ensures that the condition \eqref{b105} of the Dunford-Pettis theorem (Theorem~\ref{thb103}) is satisfied. We are thus in a position to apply the Dunford-Pettis theorem to obtain \eqref{sc28}.

Let us now turn to \eqref{sc29} and notice that Lemma~\ref{le.sc2} entails that $(f_n)_{n\ge 1}$ is equicontinuous for the strong topology of $L^1(0,\infty)$ at every $t\ge 0$ and thus also weakly equicontinuous in $L^1(0,\infty)$ at every $t\ge 0$, which completes the proof of \eqref{sc29}.

We have thereby established that 
$$
\text{ the sequence }\ (f_n)_{n\ge 1}\ \text{ is relatively sequentially compact in }\ C([0,T];w-L^1(0,\infty))
$$ 
for every $T>0$ and a diagonal process ensures that there are a subsequence of $(f_n)_{n\ge 1}$ (not relabeled) and $f\in C([0,\infty);w-L^1(0,\infty))$ such that
\begin{equation}
f_n \longrightarrow f \;\;\text{ in }\;\; C([0,T];w-L^1(0,\infty)) \;\;\text{ for all }\;\; T>0\,. \label{sc32}
\end{equation}
Since $f_n$ is non-negative and satisfies \eqref{sc20} for each $n\ge 1$, we readily deduce from the convergence \eqref{sc32} that $f(t)$ is non-negative and satisfies \eqref{sc24}. 

\smallskip

\noindent\textbf{Step~2: Convergence.} We now check that the function $f$ constructed in the previous step solves \eqref{sc1}-\eqref{sc2} in an appropriate sense. To this end, let us first consider $t>0$ and a function $\vartheta\in L^\infty(0,\infty)$ with compact support included in $(0,R_0)$ for some $R_0>0$. By \eqref{sc11}, 
\begin{equation}
\int_0^\infty \vartheta(x) \left( f_n(t,x) - f^{in}(x) \right)\ dx = \frac{1}{2}  \left( I_{1,n}(t) + I_{2,n}(t) + I_{3,n}(t) \right)\,, \label{sc34}
\end{equation}
with
\begin{align*}
I_{1,n}(t) & := \int_0^t \int_0^{R_0} \int_0^{R_0} \tilde{\vartheta}(x,y) K_n(x,y) f_n(s,x) f_n(s,y)\ dydxds\,, \\
I_{2,n}(t) & := \int_0^t\int_0^{R_0} \int_{R_0}^\infty \tilde{\vartheta}(x,y) K_n(x,y) f_n(s,x) f_n(s,y)\ dydxds\,, \\
I_{3,n}(t) & := \int_0^t \int_{R_0}^\infty \int_0^\infty \tilde{\vartheta}(x,y) K(x,y) f_n(s,x) f_n(s,y)\ dydxds\,.
\end{align*}

Let us first identify the limit of $I_{1,n}(t)$. Since $(K_n)_{n\ge 1}$ is a bounded sequence of $L^\infty((0,R_0)\times (0,R_0))$ by \eqref{sc21} and converges a.e. towards $K$ in $(0,R_0)\times (0,R_0)$, we infer from Proposition~\ref{prd2} and the convergence \eqref{sc32} that
\begin{equation}
\lim_{n\to\infty} I_{1,n}(t) = \int_0^t \int_0^{R_0} \int_0^{R_0} \tilde{\vartheta}(x,y) K(x,y) f(s,x) f(s,y)\ dydxds\,. \label{sc35}
\end{equation}

Next, $\tilde{\vartheta}(x,y) = - \vartheta(x)$ for $(x,y)\in (0,R_0) \times (R_0,\infty)$ and
$$
I_{2,n}(t) = - \int_0^t\int_0^{R_0} \int_{R_0}^\infty \vartheta(x) K_n(x,y) f_n(s,x) f_n(s,y)\ dydxds\,.
$$
For $R>R_0$, we split $I_{2,n}(t)$ into two parts
\begin{equation}
I_{2,n}(t) = I_{21,n}(t,R) + I_{22,n}(t,R) \label{sc35a}
\end{equation} 
with
\begin{align*}
I_{21,n}(t,R) & := - \int_0^t \int_0^{R_0} \int_{R_0}^R \vartheta(x) K_n(x,y) f_n(s,x) f_n(s,y)\ dydxds\ , \\
I_{22,n}(t,R) & := - \int_0^t \int_0^{R_0} \int_R^\infty \vartheta(x) K_n(x,y) f_n(s,x) f_n(s,y)\ dydxds\ .
\end{align*}
On the one hand we argue as for $I_{1,n}(t)$ to conclude that
\begin{equation}
\lim_{n\to\infty}  I_{21,n}(t,R) = - \int_0^t \int_0^{R_0} \int_{R_0}^R \vartheta(x) K(x,y) f(s,x) f(s,y)\ dydxds\,. \label{sc36}
\end{equation}
On the other hand, using \eqref{sc20} and \eqref{sc22}, we find
\begin{align}
\left| I_{22,n}(t,R) \right| & \le \|\vartheta\|_\infty \int_0^t \int_0^{R_0} \int_R^\infty \omega_{R_0}(y) y f_n(s,x) f_n(s,y)\ dydxds \nonumber \\
& \le \|\vartheta\|_\infty \int_0^t M_0(f_n(s)) M_1(f_n(s))\ ds \sup_{y\in (R,\infty)}\{\omega_{R_0}(y)\} \nonumber \\
& \le t \|\vartheta\|_\infty M_0(f^{in}) M_1(f^{in})\ \sup_{y\in (R,\infty)}\{\omega_{R_0}(y)\} \mathop{\longrightarrow}_{R\to\infty} 0 \,. \label{sc37}
\end{align}
Similarly, owing to \eqref{sc22} and \eqref{sc24} (recall that \eqref{sc24} has been established at the end of Step~1), 
\begin{align}
& \int_0^t \int_0^{R_0} \int_R^\infty \vartheta(x) K(x,y) f(s,x) f(s,y)\ dydxds \nonumber \\ 
& \qquad \le t \|\vartheta\|_\infty M_0(f^{in}) M_1(f^{in})\ \sup_{y\in (R,\infty)}\{\omega_{R_0}(y)\} \mathop{\longrightarrow}_{R\to\infty} 0 \,. \label{sc38}
\end{align}
Combining \eqref{sc35}-\eqref{sc38} and letting first $n\to\infty$ and then $R\to\infty$, we end up with 
\begin{equation}
\lim_{n\to\infty} I_{2,n}(t) = \int_0^t \int_0^{R_0} \int_{R_0}^\infty \tilde{\vartheta}(x,y) K(x,y) f(s,x) f(s,y)\ dydxds\,. \label{sc39}
\end{equation}

Finally,  $\tilde{\vartheta}(x,y) = - \vartheta(y)$ for $(x,y)\in (R_0,\infty) \times (0,R_0)$ and $\tilde{\vartheta}(x,y)=0$ if $(x,y)\in  (R_0,\infty)\times (R_0,\infty)$ so that
$$
I_{3,n}(t) = - \int_0^t\int_{R_0}^\infty \int_0^{R_0} \vartheta(y) K_n(x,y) f_n(s,x) f_n(s,y)\ dydxds\,,
$$
and we argue as for $I_{2,n}(t)$ to obtain
\begin{equation}
\lim_{n\to\infty} I_{3,n}(t) = \int_0^t \int_{R_0}^\infty \int_{0}^\infty \tilde{\vartheta}(x,y) K(x,y) f(s,x) f(s,y)\ dydxds\,. \label{sc40}
\end{equation}

Using once more the convergence \eqref{sc32}, we may use \eqref{sc35}, \eqref{sc39}, and \eqref{sc40} to pass to the limit as $n\to\infty$ in \eqref{sc34} and conclude that $f$ satisfies \eqref{sc23} for all functions $\vartheta\in L^\infty(0,\infty)$ with compact support. Thanks to \eqref{sc21} and \eqref{sc24}, a classical density argument allows us to extend the validity of \eqref{sc23} to arbitrary functions $\vartheta\in L^\infty(0,\infty)$.

\smallskip

\noindent\textbf{Step~3: Strong continuity.} We now argue as in the proof of Lemma~\ref{le.sc2} to strengthen the time continuity of $f$. More precisely, let $t\ge 0$, $s\in [0,t]$, and $\vartheta\in L^\infty(0,\infty)$. We infer from \eqref{sc21}, \eqref{sc23}, and \eqref{sc24} that 
\begin{align*}
& \left| \int_0^\infty (f(t,x)-f(s,x)) \vartheta(x)\ dx \right| \\ 
& \qquad \le \frac{3\kappa}{2} \|\vartheta\|_\infty \int_s^t \int_0^\infty \int_0^\infty (1+x) (1+y) f(\sigma,x) f(\sigma,y)\ dydxd\sigma \\
& \qquad \le \frac{3\kappa}{2} \|\vartheta\|_\infty \int_s^t \|f(\sigma)\|_{1,0}^2\ d\sigma \\
& \qquad \le \frac{3\kappa}{2} \|\vartheta\|_\infty \|f^{in}\|_{1,0}^2\ |t-s|\,.
\end{align*}
Therefore,
\begin{align*}
\|f(t)-f(s)\|_1 & = \sup_{\vartheta\in L^\infty(0,\infty)}\left\{ \frac{1}{\|\vartheta\|_\infty} \left| \int_0^\infty (f(t,x)-f(s,x)) \vartheta(x)\ dx \right| \right\} \\ 
& \le \frac{3\kappa}{2} \|f^{in}\|_{1,0}^2\ |t-s|\,,
\end{align*}
which completes the proof of Theorem~\ref{th.sc1}.
\end{proof}

\subsubsection{Linearly growing kernels}
\label{sec:322}

We next turn to coagulation kernels growing at most linearly at infinity, that is, we assume that there is $\kappa_1>0$ such that
\begin{equation}
0 \le K(x,y) = K(y,x) \le \kappa_1 (2+x+y)\,, \quad (x,y)\in (0,\infty)\times (0,\infty)\,. \label{sc41}
\end{equation}
Observe that coagulation kernels satisfying \eqref{sc41} also satisfy \eqref{sc21} but need not satisfy \eqref{sc22}. 

For this class of coagulation kernels, we establish the existence of mass-conserving solutions to \eqref{sc1}-\eqref{sc2} \cite{BC90, DS96, LM02, St89, St91}.

\begin{theorem}\label{th.sc2}
Assume that the coagulation kernel $K$ satisfies \eqref{sc41} and consider an initial condition $f^{in}$ satisfying \eqref{sc18}. There is a non-negative function 
$$
f\in C([0,\infty);L^1(0,\infty)) \cap L^\infty(0,\infty;L_1^1(0,\infty))
$$ 
satisfying \eqref{sc23} and
\begin{equation}
M_0(f(t)) \le M_0(f^{in}) \;\;\mbox{ and }\;\; M_1(f(t)) = M_1(f^{in})\,, \quad t\ge 0\,. \label{sc42}
\end{equation}
\end{theorem}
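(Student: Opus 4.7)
The plan is to follow the compactness strategy of Theorem~\ref{th.sc1} for existence and convergence, and to add the propagation of a super-linear moment in order to cope with the loss of the sub-linearity hypothesis \eqref{sc22} and to establish mass conservation. As in Theorem~\ref{th.sc1}, I take the approximating solutions $f_n$ provided by Proposition~\ref{pr.sc1} with kernel $K_n:=\min\{K,n\}$, so that $M_0(f_n(t))\le M_0(f^{in})$ and $M_1(f_n(t))=M_1(f^{in})$. The bound \eqref{sc41} implies $K_n(x,y)\le 2\kappa_1(1+x)(1+y)$, so Lemma~\ref{le.sc1} and Lemma~\ref{le.sc2} apply verbatim: selecting $\Phi\in\mathcal{C}_{VP,\infty}$ with $\Phi(f^{in})\in L^1(0,\infty)$ via Corollary~\ref{cob119}, we obtain the local uniform integrability of $(f_n(t))_{n\ge 1}$ in $L^1(0,\infty)$ and the time equicontinuity $\|f_n(t)-f_n(s)\|_1\le C_T|t-s|$ on any $[0,T]$.

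The new ingredient is a uniform super-linear moment. Since $f^{in}\in L_1^1(0,\infty)$, the function $x\mapsto x$ lies in $L^1((0,\infty),f^{in}(x)\,dx)$, and Corollary~\ref{cob119} provides a $\Psi\in \mathcal{C}_{VP,\infty}$ with $\int_0^\infty \Psi(x)f^{in}(x)\,dx<\infty$. Taking $\Psi_A(x):=\Psi(\min\{x,A\})\in L^\infty(0,\infty)$ as test function in \eqref{sc11}, a short case analysis (separating $x+y\le A$, $\max\{x,y\}\le A<x+y$, and $\max\{x,y\}>A$) combined with inequality \eqref{b125} of Proposition~\ref{prb121} yields
\[
\widetilde{\Psi_A}(x,y) \le \frac{2[x\Psi_A(y)+y\Psi_A(x)]}{x+y}.
\]
Multiplying by $K_n(x,y)\le \kappa_1(2+x+y)$, bounding the convex combination $\tfrac{x\Psi_A(y)+y\Psi_A(x)}{x+y}\le \Psi_A(x)+\Psi_A(y)$, and symmetrising in $(x,y)$, one arrives at
\[
\frac{d}{dt}\int_0^\infty \Psi_A(x)f_n(t,x)\,dx\le 2\kappa_1\bigl(M_1(f^{in})+2M_0(f^{in})\bigr)\int_0^\infty \Psi_A(x)f_n(t,x)\,dx.
\]
Gronwall's lemma gives a bound uniform in $n\ge 1$ and $A>0$, and monotone convergence as $A\to\infty$ yields $\sup_{n\ge 1,\,t\in[0,T]}\int_0^\infty \Psi(x)f_n(t,x)\,dx<\infty$. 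Since $\Psi(x)/x\to\infty$ as $x\to\infty$, this produces tightness $\int_R^\infty xf_n(t,x)\,dx\to 0$ as $R\to\infty$ uniformly in $n$ and $t$, and, combined with the uniform integrability of $(f_n(t))$, implies the uniform integrability of $(xf_n(t))_{n\ge 1}$ in $L^1(0,\infty)$ for every $t\in[0,T]$.

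The remainder of the proof is a minor variation of Theorem~\ref{th.sc1}. The Arzel\`a-Ascoli-type argument produces a subsequence converging in $C([0,T];w-L^1(0,\infty))$ to a limit $f$, and the passage to the limit in \eqref{sc23} follows Step~2 of Theorem~\ref{th.sc1}, except that the tail term corresponding to $y\ge R$ is now controlled via $K(x,y)\le 2\kappa_1 y$ (valid for $y\ge R\ge R_0+2$) and the uniform integrability of $(xf_n(s))_n$ in place of \eqref{sc22}. For mass conservation, the uniform integrability of $(xf_n(t))_n$ combined with $f_n(t)\rightharpoonup f(t)$ in $L^1(0,\infty)$ forces $xf_n(t)\rightharpoonup xf(t)$ in $L^1(0,\infty)$: the limit is identified by testing against compactly supported $\vartheta\in L^\infty(0,\infty)$ (for which $x\vartheta(x)\in L^\infty(0,\infty)$), and testing against $\vartheta\equiv 1\in L^\infty(0,\infty)$ then yields $M_1(f(t))=\lim_{n\to\infty} M_1(f_n(t))=M_1(f^{in})$. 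The main obstacle is the super-linear moment propagation: a naive bound such as $\widetilde{\Psi}\le 2(\Psi(x)+\Psi(y))$ would, against a linearly growing kernel, generate an uncontrolled $M_2(f_n)$ term; it is precisely the concavity of $\Psi'$ encoded in $\mathcal{C}_{VP,\infty}$, and specifically inequality \eqref{b125}, that allows the estimate to close without any second-moment control.
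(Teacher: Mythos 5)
Your proof is correct and follows essentially the same route as the paper's: approximating by bounded kernels $K_n$, reusing Lemma~\ref{le.sc1} and Lemma~\ref{le.sc2} for compactness, propagating a de la Vall\'ee Poussin super-linear moment via inequality \eqref{b125} to replace the missing hypothesis \eqref{sc22}, and then using that moment to control the tails both in the passage to the limit and to pass $M_1(f_n(t))=M_1(f^{in})$ to the limit. The only cosmetic difference is that the paper's Lemma~\ref{le.sc3} works with $\psi(1+x)$ so that \eqref{b125} applies to $r=1+x$, $s=1+y$ and matches $(2+x+y)$ exactly, whereas you apply \eqref{b125} with $r=x$, $s=y$ and absorb the resulting $2/(x+y)$ factor by the convex-combination bound, which is why your Gronwall constant picks up the extra $M_0(f^{in})$ term compared with the paper's $C_3=2\kappa_1\|f^{in}\|_{1,1}$.
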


The main difference between the outcomes of Theorem~\ref{th.sc1} and Theorem~\ref{th.sc2} is the conservation of mass $M_1(f(t))=M_1(f^{in})$ for all $t\ge 0$ for coagulation kernels satisfying \eqref{sc41}. 

Since the growth assumption \eqref{sc41} is more restrictive than \eqref{sc21}, it is clear that the proof of Theorem~\ref{th.sc2} has some common features with that of Theorem~\ref{th.sc1}. In particular, both Lemma~\ref{le.sc1} and Lemma~\ref{le.sc2} are valid in that case as well. The main difference lies actually in the control of the behaviour of $f_n(t,x)$ for large values of $x$ which is provided by the boundedness of $M_1(f_n(t))$ in \eqref{sc20}. This turns out to be not sufficient for coagulation kernels satisfying \eqref{sc41} and we first show that this assumption is particularly well-suited to control higher moments. 

\begin{lemma}\label{le.sc3}
Let $\psi\in\mathcal{C}_{VP}$ be such that $x\mapsto \psi(1+x) f^{in}(x)\in L^1(0,\infty)$. There is a positive constant $C_3>0$ depending only on $K$ and $f^{in}$ such that
\begin{equation}
\int_0^\infty \psi(1+x) f_n(t,x)\ dx \le \left( \int_0^\infty \psi(1+x) f^{in}(x)\ dx \right) e^{C_3 t}\,, \quad t\ge 0\,, \quad n\ge 1\,, \label{sc43}
\end{equation}
the function $f_n$ being still the solution to \eqref{sc1}-\eqref{sc2} with $K_n$ instead of $K$, the coagulation kernel $K_n$ being defined by \eqref{sc19}.
\end{lemma}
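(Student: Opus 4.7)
The plan is to use the weak formulation \eqref{sc11} (which holds for $f_n$ with $K_n$ in place of $K$) applied to the test function $\vartheta(x) := \psi(1+x)$, bound the resulting double integral using \eqref{b125}, and close the estimate by Gronwall's lemma together with the uniform first-moment bound in \eqref{sc20}. Since $\psi(1+\cdot)$ is not in $L^\infty(0,\infty)$, a preliminary truncation is required: as $K_n$ is bounded by \eqref{sc19} and $\psi$ grows at most quadratically by Remark~\ref{reb128}, a direct Gronwall argument (based on $(x+y)^2 - x^2 - y^2 = 2xy$) shows that $M_2(f_n(t))<\infty$ for every $t\ge 0$, hence $\int_0^\infty \psi(1+x) f_n(t,x)\, dx$ is finite. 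Applying \eqref{sc11} to the bounded test function $\vartheta_N(x) := \psi(1+\min\{x,N\})$ and letting $N\to\infty$ by monotone convergence (the monotonicity of $\psi$ making all sign manipulations transparent) legitimates the use of $\vartheta=\psi(1+\cdot)$.

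For the key pointwise estimate, I would proceed as follows. Since $\psi'\ge 0$ by \eqref{b107}, the monotonicity of $\psi$ gives
$$
\tilde{\vartheta}(x,y) = \psi(1+x+y) - \psi(1+x) - \psi(1+y) \le \psi(2+x+y) - \psi(1+x) - \psi(1+y)\,,
$$
and applying \eqref{b125} with $r=1+x$ and $s=1+y$ yields
$$
(2+x+y)\bigl[ \psi(2+x+y) - \psi(1+x) - \psi(1+y) \bigr] \le 2\bigl[ (1+x)\psi(1+y) + (1+y)\psi(1+x) \bigr]\,.
$$
Combining these two inequalities with the growth assumption \eqref{sc41} and $K_n\le K$, and distinguishing according to the sign of $\tilde{\vartheta}$ (the bound being trivial when $\tilde{\vartheta}\le 0$), one obtains the pointwise inequality
$$
K_n(x,y)\tilde{\vartheta}(x,y) \le 2\kappa_1\bigl[ (1+x)\psi(1+y) + (1+y)\psi(1+x) \bigr]\,.
$$

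Plugging this into \eqref{sc11}, using Fubini and the symmetry of the integrand in $(x,y)$, and then invoking the uniform bound $\|f_n(t)\|_{1,1}\le \|f^{in}\|_{1,1}$ from \eqref{sc20}, I obtain the linear differential inequality
$$
\frac{d}{dt}\int_0^\infty \psi(1+x) f_n(t,x)\, dx \le 2\kappa_1 \|f^{in}\|_{1,1} \int_0^\infty \psi(1+y) f_n(t,y)\, dy\,.
$$
Gronwall's lemma with $C_3 := 2\kappa_1\|f^{in}\|_{1,1}$ then gives \eqref{sc43}. The main obstacle lies in the pointwise bound on $K_n\tilde{\vartheta}$: the factor $2+x+y$ from the linear growth \eqref{sc41} of $K$ has to be absorbed against the same factor on the left-hand side of \eqref{b125}, so that the right-hand side factors as a product of $(1+x)f_n(t,x)$ and $\psi(1+y)f_n(t,y)$. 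Without the specific form of \eqref{b125} (which rests on the concavity of $\psi'$ built into $\mathcal{C}_{VP}$), the differential inequality would involve higher moments of $f_n$ and could not be closed by Gronwall's lemma.
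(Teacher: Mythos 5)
Your proof follows the same strategy as the paper: truncate the test function $\psi(1+\cdot)$, apply \eqref{sc11}, bound the coagulation integrand pointwise using the monotonicity of $\psi$ and inequality \eqref{b125} with $r=1+x$, $s=1+y$, absorb the factor $2+x+y$ from \eqref{sc41} against the factor on the left side of \eqref{b125}, and close with Gronwall using $\|f_n(t)\|_{1,1}\le\|f^{in}\|_{1,1}$ from \eqref{sc20}. The key pointwise bound and the final constant $C_3 = 2\kappa_1\|f^{in}\|_{1,1}$ match the paper exactly.

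However, your preliminary justification for passing from $\vartheta_N$ to $\psi(1+\cdot)$ has a gap. You claim a Gronwall argument for $M_2(f_n(t))$ gives $M_2(f_n(t))<\infty$, but that argument yields $M_2(f_n(t))\le M_2(f^{in}) + n M_1(f^{in})^2 t$, which is vacuous when $M_2(f^{in})=\infty$. The hypotheses here are $f^{in}\in L_1^1(0,\infty)$ from \eqref{sc18} and $\psi(1+\cdot)f^{in}\in L^1$, and since $\psi$ may grow like, say, $x^{3/2}$ rather than $x^2$, neither implies a finite second moment for $f^{in}$. Moreover, your claim that monotone convergence legitimates letting $N\to\infty$ in \eqref{sc11} is not clean on the right-hand side: $\tilde{\vartheta}_N(x,y)$ is neither monotone in $N$ nor of one sign, so neither monotone nor dominated convergence applies directly. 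Both issues are avoided (as in the paper) by not attempting to extend \eqref{sc11} to the unbounded test function at all: instead, bound $\tilde{\psi}_A(x,y)$ for the bounded truncation $\psi_A(x)=\psi(1+\min\{x,A\})$ region by region (it is $\le 0$ unless both $x<A$ and $y<A$, and is dominated by $\psi(2+x+y)-\psi(1+x)-\psi(1+y)$ there), run Gronwall on the truncated quantity $\int\psi_A(x)f_n(t,x)\,dx$ directly to obtain a bound uniform in $A$, and only then let $A\to\infty$ by monotone convergence on the single-variable integrals. Your pointwise estimate and differential inequality are correct; it is only the preliminary reduction that needs to be reorganized.
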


\begin{proof} Fix $A>0$ and define $\psi_A(x) := \min\{ \psi(1+x) , \psi(1+A) \} = \psi\left( 1 + \min\{x,A\} \right)$ for $x>0$. Then $\psi_A\in L^\infty(0,\infty)$ and it follows from \eqref{sc11} that
\begin{equation*}
\frac{d}{dt} \int_0^\infty \psi_A(x) f_n(t,x)\ dx = \frac{1}{2} \int_0^\infty \int_0^\infty \tilde{\psi}_A(x,y) K_n(x,y) f_n(t,x) f_n(t,y)\ dydx\,. 
\end{equation*}
Owing to the monotonicity and non-negativity of $\psi$, we note that
\begin{align*}
\tilde{\psi}_A(x,y) & = \psi(1+x+y) - \psi(1+x) - \psi(1+y) \\
& \le \psi(2+x+y) - \psi(1+x) - \psi(1+y) \;\;\text{ if }\;\; y\in (0,A-x) \;\;\text{ and }\;\; x\in (0,A)\,, \\
\tilde{\psi}_A(x,y) & = \psi(1+A) - \psi(1+x) - \psi(1+y) \\
& \le \psi(2+x+y) - \psi(1+x) - \psi(1+y) \;\;\text{ if }\;\; y\in (A-x,A) \;\;\text{ and }\;\; x\in (0,A)\,, \\
\tilde{\psi}_A(x,y) & = - \psi(1+x) \le 0 \;\;\text{ if }\;\; (x,y)\in (0,A)\times (A,\infty)\,, \\
\tilde{\psi}_A(x,y) & = - \psi(1+y) \le 0 \;\;\text{ if }\;\; (x,y)\in (A,\infty \times (0,A)\,, \\
\tilde{\psi}_A(x,y) & = - \psi(1+A) \le 0 \;\;\text{ if }\;\; (x,y)\in (A,\infty \times (A,\infty)\,.
\end{align*}
Consequently, 
\begin{align*}
& \frac{d}{dt} \int_0^\infty \psi_A(x) f_n(t,x)\ dx \\
& \qquad \le \frac{1}{2} \int_0^A \int_0^A \left( \psi(2+x+y) - \psi(1+x) - \psi(1+y) \right) K_n(x,y) f_n(t,x) f_n(t,y)\ dydx\,,
\end{align*}
and we infer from \eqref{b125} and \eqref{sc41} that
\begin{align*}
& \left( \psi(2+x+y) - \psi(1+x) - \psi(1+y) \right) K_n(x,y) \\ 
& \qquad \le \kappa_1 \left( \psi(2+x+y) - \psi(1+x) - \psi(1+y) \right) (2+x+y) \\
& \qquad \le 2\kappa_1 \left( (1+x) \psi(1+y) + (1+y) \psi(1+x) \right)\,.
\end{align*}
Combining the above two estimates leads us to
$$
\frac{d}{dt} \int_0^\infty \psi_A(x) f_n(t,x)\ dx \le 2\kappa_1 \|f_n(t)\|_{1,1}\ \int_0^A \psi(1+x) f_n(t,x)\ dx\,.
$$
Using \eqref{sc20}, we end up with 
$$
\frac{d}{dt} \int_0^\infty \psi_A(x) f_n(t,x)\ dx \le 2\kappa_1 \|f^{in}\|_{1,1}\ \int_0^\infty \psi_A(x) f_n(t,x)\ dx\,,
$$
and \eqref{sc43} follows by integration with $C_3 := 2\kappa_1 \|f^{in}\|_{1,1}$. \end{proof}

\begin{proof}[Proof of Theorem~\ref{th.sc2}] As in the proof of Theorem~\ref{th.sc1}, the de la Vall\'ee Poussin theorem (Theorem~\ref{thb106}) ensures the existence of a function $\Phi\in \mathcal{C}_{VP,\infty}$ such that $\Phi(f^{in})\in L^1(0,\infty)$. Moreover, observing that the property $f^{in}\in L^1(0,\infty;(1+x)\ dx)$ also reads $x\mapsto 1+x \in L^1(0,\infty;f^{in}(x)\ dx)$, we use once more the de la Vall\'ee Poussin theorem (now with $\mu=f^{in} dx$) to obtain a function $\varphi\in\mathcal{C}_{VP,\infty}$ such that $x\mapsto \varphi(1+x)$ belongs to $L^1(0,\infty;f^{in}(x)\ dx)$. Summarizing we have established that there are two functions $\Phi$ and $\varphi$ in $\mathcal{C}_{VP,\infty}$ such that
\begin{equation}
\int_0^\infty \left[ \Phi(f^{in}(x)) + \varphi(1+x) f^{in}(x) \right]\ dx < \infty\ . \label{sc44}
\end{equation}
We now infer from \eqref{sc44}, Lemma~\ref{le.sc1} (with $\Psi=\Phi$) and Lemma~\ref{le.sc3} (with $\psi=\varphi$) that, for each $t\ge 0$, $n\ge 1$, and $R>0$, 
\begin{align}
\int_0^R \Phi(f_n(t,x))\ dx & \le \|\Phi(f^{in})\|_1 e^{C_1(R) t}\ , \label{sc44a} \\
\int_0^\infty \varphi(1+x) f_n(t,x)\ dx & \le \left( \int_0^\infty \varphi(1+x) f^{in}(x)\ dx \right) e^{C_3t}\ , \label{sc44b}
\end{align}
where $C_1(R)$ only depends on $K$, $f^{in}$, and $R$ and $C_3$ on $K$ and $f^{in}$.

\smallskip

\noindent\textbf{Step~1: Weak compactness.} We argue as in the first step of the proof of Theorem~\ref{th.sc1} with the help of \eqref{sc20} and \eqref{sc44a} to conclude that there are a subsequence of $(f_n)_{n\ge 1}$ (not relabeled) and a non-negative function $f\in C([0,\infty);w-L^1(0,\infty))$ such that
\begin{equation}
f_n \longrightarrow f \;\;\text{ in }\;\; C([0,T];w-L^1(0,\infty)) \;\;\text{ for all }\;\; T>0\,. \label{sc45}
\end{equation}

\smallskip

\noindent\textbf{Step~2: Convergence.} We keep the notation used in the proof of Theorem~\ref{th.sc1} and notice that \eqref{sc35} and \eqref{sc36} are still valid. A different treatment is required for $I_{22,n}(t,R)$: thanks to \eqref{sc20}, \eqref{sc41}, \eqref{sc44}, \eqref{sc44b}, and the monotonicity of $r\mapsto \varphi(r)/r$, we obtain
\begin{align*}
I_{22,n}(t,R) & \le \kappa \|\vartheta\|_\infty \int_0^t \int_0^{R_0} \int_R^\infty (2+x+y) f_n(s,x) f_n(s,y)\ dydxds \\
& \le 2\kappa \|\vartheta\|_\infty \int_0^t \int_0^{R_0} \int_R^\infty (1+x)(1+y) f_n(s,x) f_n(s,y)\ dydxds \\
& \le \frac{2 \kappa R}{\varphi(R)} \|\vartheta\|_\infty \int_0^t \|f_n(s)\|_{1,1} \int_R^\infty \varphi(1+y) f_n(s,y)\ dyds \\
& \le \frac{2\kappa R}{\varphi(R)} \|\vartheta\|_\infty \|f^{in} \|_{1,1} \left( \int_0^\infty \varphi(1+y) f^{in}(y)\ dy \right) \frac{e^{C_3 t}}{C_3}\,,
\end{align*}
and the right-hand side of the above inequality converges to zero as $R\to\infty$ since $\varphi\in \mathcal{C}_{VP,\infty}$. Arguing in a similar way to handle $I_{3,n}(t)$, we complete the proof of \eqref{sc23} as in the proof of Theorem~\ref{th.sc1}. Finally, the mass conservation $M_1(f(t))=M_1(f^{in})$ for each $t>0$ follows by passing to the limit as $n\to\infty$ in the equality $M_1(f_n(t))=M_1(f^{in})$ from \eqref{sc20} with the help of \eqref{sc44}, \eqref{sc44b}, \eqref{sc45}, and the property $\varphi\in \mathcal{C}_{VP,\infty}$ to control the behaviour for large values of $x$. 

\end{proof}

\subsubsection{Product kernels}
\label{sec:323}

The last class of kernels we consider allows us to get rid of any growth condition on $K$ provided it has a specific form. More precisely, we assume that there is a non-negative continuous function $r\in C([0,\infty))$ such that $r(x)>0$ for $x>0$ and 
\begin{equation}
K(x,y) = r(x) r(y)\ , \quad (x,y)\in (0,\infty)\times (0,\infty)\ . \label{sc46}
\end{equation}
The celebrated multiplicative kernel $K_2(x,y) =xy$ fits into this framework with $r(x)=x$ for $x>0$. Observe that no growth condition is required on $r$. 

For this class of kernels, the existence result is similar to Theorem~\ref{th.sc1} and reads:

\begin{theorem}\label{th.sc3}
Assume that the coagulation kernel $K$ satisfies \eqref{sc46} and consider an initial condition $f^{in}$ satisfying \eqref{sc18}. There is a non-negative function 
$$
f\in C([0,\infty);L^1(0,\infty)) \cap L^\infty(0,\infty;L_1^1(0,\infty))
$$ 
satisfying \eqref{sc23} and
\begin{equation}
M_0(f(t)) \le M_0(f^{in}) \;\;\mbox{ and }\;\; M_1(f(t)) \le M_1(f^{in})\,, \quad t\ge 0\,. \label{sc47}
\end{equation}
\end{theorem}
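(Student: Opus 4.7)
The plan is to follow the compactness scheme sketched at the start of Section~\ref{sec:32}, exploiting the product structure of $K$ to replace the growth-based moment estimates of Theorems~\ref{th.sc1} and~\ref{th.sc2} by an $L^2$-in-time dissipation estimate. First I would truncate in a way that preserves the product structure by setting $r_n := \min\{r,n\}$ and $K_n(x,y) := r_n(x) r_n(y) \le n^2$. Proposition~\ref{pr.sc1} then provides a unique non-negative solution $f_n \in C^1([0,\infty);L^1(0,\infty))$ with $M_0(f_n(t)) \le M_0(f^{in})$ and $M_1(f_n(t)) = M_1(f^{in})$.

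The key new a priori estimate comes from \eqref{sc11} with $\vartheta \equiv 1$: since $\tilde{\vartheta} \equiv -1$, one has
\[
\frac{d}{dt} M_0(f_n(t)) = - \frac{1}{2} N_n(t)^2\,, \qquad N_n(t) := \int_0^\infty r_n(x) f_n(t,x)\, dx\,,
\]
so $\|N_n\|_{L^2(0,T)}^2 \le 2 M_0(f^{in})$ uniformly in $n$ and $T$. For local weak compactness in $x$, note that $K$ is bounded on $(0,R)^2$ by $M_R := (\max_{[0,R]} r)^2$ since $r$ is continuous, so a direct adaptation of Lemma~\ref{le.sc1} gives, for $\Phi \in \mathcal{C}_{VP,\infty}$ with $\Phi(f^{in}) \in L^1(0,\infty)$ provided by Corollary~\ref{cob119}, a uniform bound on $\int_0^R \Phi(f_n(t,x))\, dx$. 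Combined with the tail estimate $\int_R^\infty f_n(t,x)\, dx \le M_1(f^{in})/R$ coming from the mass bound, Theorem~\ref{thb103} (Dunford--Pettis) yields weak relative compactness of $(f_n(t))_{n\ge 1}$ in $L^1(0,\infty)$ at each $t \ge 0$.

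For time equicontinuity, take $\vartheta \in L^\infty(0,\infty)$ supported in $(0,R_0)$ and decompose the right-hand side of \eqref{sc11} as in the proof of Theorem~\ref{th.sc1} into contributions from $(0,R_0)^2$, $(0,R_0) \times (R_0,\infty)$, and $(R_0,\infty) \times (0,R_0)$, the remaining region contributing nothing since $\tilde{\vartheta} \equiv 0$ there when $\vartheta$ is supported in $(0,R_0)$. The first piece is bounded by a multiple of $M_{R_0}^2 M_0(f^{in})^2$ uniformly in time, while each cross piece is bounded by a multiple of $M_{R_0} M_0(f^{in}) N_n(s)$. The Cauchy-Schwarz inequality applied to the $L^2$ dissipation bound then yields
\[
\left| \int_0^\infty \vartheta(x) (f_n(t,x) - f_n(s,x))\, dx \right| \le C(\vartheta, R_0, f^{in})\, \bigl( |t-s| + |t-s|^{1/2} \bigr)\,,
\]
hence weak equicontinuity of $(f_n)$. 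A variant of the Arzel\`a-Ascoli theorem extracts a subsequence converging in $C([0,T];w{-}L^1(0,\infty))$ to some non-negative $f$ satisfying \eqref{sc47} by weak lower semicontinuity and a diagonal argument over $R_0 \to \infty$.

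Passage to the limit in \eqref{sc23} follows the structure of Step~2 of the proof of Theorem~\ref{th.sc1}: on $(0,R_0)^2$ the kernel $K_n$ is bounded and converges locally uniformly to $K$ (since $r_n \to r$ locally uniformly), so Proposition~\ref{prd2} applies directly. The cross regions $(0,R_0) \times (R_0,\infty)$ are the main obstacle, since $r$ may grow arbitrarily fast and the loss-type integrand $\vartheta(x) r_n(x) r_n(y) f_n(s,x) f_n(s,y)$ can be controlled only through the quadratic quantity $N_n$. The passage to the limit is carried out by combining local uniform convergence of $r_n$, the weak convergence of $f_n$, and a Fatou-type argument on $N_n$ using its uniform $L^2(0,T)$ bound to identify its limit with $N(s) := \int_0^\infty r(y) f(s,y)\, dy$, after which the argument closes as in Theorem~\ref{th.sc1}. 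The essential use of the product structure is precisely at this step: the unbounded factor $r(y)$ appearing in the tail is tamed by being part of a single scalar quantity $N_n(s)^2$ whose integral in time is a priori controlled by the mass dissipation.
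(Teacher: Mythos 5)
Your overall strategy matches the paper's: truncate as $\tilde K_n = r_n(x)r_n(y)$ to preserve the product structure, derive the $L^2$-in-time dissipation estimate from \eqref{sc11} with $\vartheta\equiv 1$, obtain local weak compactness via the de la Vall\'ee Poussin/Dunford--Pettis machinery, and prove time equicontinuity from the dissipation bound via Cauchy--Schwarz, which is exactly the spirit of Lemma~\ref{le.sc4}--\eqref{sc50} and Lemma~\ref{le.sc5}. The paper phrases the equicontinuity as a strong $L^1$ modulus $\omega$ obtained by optimizing over the truncation radius $R$, while you phrase it weakly, but the estimates are the same.

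However, there is a genuine gap in the passage to the limit, specifically in the cross region $(0,R_0)\times(R_0,\infty)$. You propose to control the tail through ``a Fatou-type argument on $N_n$ using its uniform $L^2(0,T)$ bound,'' but the dissipation estimate you derive, $\int_0^t N_n(s)^2\,ds \le 2M_0(f^{in})$, controls only the \emph{full} quantity $N_n(s)=\int_0^\infty r_n g_n$; it gives no smallness of the tail piece $\int_R^\infty r_n(y) g_n(s,y)\,dy$ as $R\to\infty$ uniformly in $n$. Bounding this tail by $N_n(s)$ and applying Cauchy--Schwarz only yields a bound of order $O(1)$, not $o(1)$, so the remainder $I_{22,n}(t,R)$ does not vanish. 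Nor can you identify the weak $L^2(0,T)$ limit of $N_n$ with $\int_0^\infty r f(s,\cdot)$ without first controlling this tail, so the proposed Fatou argument is circular. What is missing is the second estimate of Lemma~\ref{le.sc4}, namely
\begin{equation*}
\int_0^t \left( \int_A^\infty r_n(x)\, g_n(s,x)\,dx \right)^2 ds \ \le\ \frac{2M_1(f^{in})}{A}\,,
\end{equation*}
which is obtained by applying \eqref{sc11} to the truncated test function $\vartheta_A(x)=\min\{x,A\}$ and using the sign structure \eqref{sc13} (the region $\min\{x,y\}\ge A$ contributes $-A$, giving the estimate after dividing by $A$). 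It is this inequality, combined with the factorized form of $\tilde K_n$, that yields the uniform-in-$n$ decay of the tail contribution as $R\to\infty$ and closes Step~2. Without it, neither the identification of the limit in \eqref{sc23} nor, incidentally, the upgrade to strong $L^1$ equicontinuity (the $2M_1(f^{in})/R$ tail term in \eqref{sc53}) can be completed.
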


To prove Theorem~\ref{th.sc3}, it turns out that it is easier to work with the following truncated version of $K$ which differs from \eqref{sc19}. Given an integer $n\ge 1$ and $x>0$, we define $r_n(x) := \min\{ r(x) , n \}$ and
\begin{equation}
\tilde{K}_n(x,y) := r_n(x) r_n(y) \ , \quad (x,y)\in (0,\infty)\times (0,\infty)\ . \label{sc48}
\end{equation}
Clearly, $\tilde{K}_n$ is a non-negative, bounded, and symmetric function and we infer from \eqref{sc18} and Proposition~\ref{pr.sc1} that the initial-value problem \eqref{sc1}-\eqref{sc2} with $\tilde{K}_n$ instead of $K$ has a unique non-negative solution $g_n\in C^1([0,\infty);L^1(0,\infty))$ which satisfies
\begin{equation}
M_0(g_n(t)) \le M_0(f^{in}) \;\;\mbox{ and }\;\; M_1(g_n(t)) = M_1(f^{in})\ , \quad t\ge 0\ . \label{sc49}
\end{equation}
Without a control on the growth of $K$, the estimate \eqref{sc49} on $\|g_n(t)\|_{1,1}$ is not sufficient to control the behaviour of $g_n(t,x)$ for large values of $x$. As we shall see now, it is the specific form \eqref{sc46} of $K$ which provides this control.

\begin{lemma}\label{le.sc4}
For $t>0$, $A>0$, and $n\ge 1$,
\begin{align}
\int_0^t \left( \int_0^\infty r_n(x) g_n(s,x)\ dx \right)^2\ ds & \le 2M_0(f^{in})\ , \label{sc50} \\
\int_0^t \left( \int_A^\infty r_n(x) g_n(s,x)\ dx \right)^2\ ds & \le \frac{2M_1(f^{in})}{A}\ . \label{sc51}
\end{align}
\end{lemma}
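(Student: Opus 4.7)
The plan is to obtain both estimates from the weak formulation \eqref{sc11} (which is at our disposal for $g_n$ with kernel $\tilde K_n$, since $\tilde K_n$ is bounded) by exploiting the key factorization $\tilde K_n(x,y) = r_n(x)r_n(y)$, which turns the integrated coagulation terms into squares of $\int r_n g_n\,dx$.

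For \eqref{sc50}, I would take $\vartheta\equiv 1$ in \eqref{sc11}, so that $\tilde\vartheta(x,y)=-1$ and the factorization gives
\[
\frac{d}{dt} M_0(g_n(t)) \;=\; -\frac{1}{2}\left(\int_0^\infty r_n(x)\,g_n(t,x)\,dx\right)^{\!2}.
\]
Integrating on $(0,t)$ and using the non-negativity of $g_n$, together with $M_0(g_n(t))\ge 0$ and $M_0(g_n(0))=M_0(f^{in})$, yields \eqref{sc50} directly.

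For \eqref{sc51}, the idea is to pick a bounded test function which detects the tail $\{x\ge A\}$. The natural candidate is $\vartheta_A(x):=\min\{x,A\}\in L^\infty(0,\infty)$, whose companion $\tilde\vartheta_A$ has already been computed in \eqref{sc13} and is pointwise non-positive, taking in particular the value $-A$ on the region $\{x\ge A\}\times\{y\ge A\}$. Plugging into \eqref{sc11} and discarding all contributions except the one coming from this region (all being non-positive), the factorization gives
\[
\frac{d}{dt}\int_0^\infty \vartheta_A(x)\,g_n(t,x)\,dx \;\le\; -\frac{A}{2}\left(\int_A^\infty r_n(x)\,g_n(t,x)\,dx\right)^{\!2}.
\]
Integrating in time and bounding the resulting left-hand side from above by $\int_0^\infty \vartheta_A(x)f^{in}(x)\,dx\le M_1(f^{in})$ (using $\vartheta_A(x)\le x$ and $g_n(t)\ge 0$) yields \eqref{sc51}.

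There is no real obstacle here; the only subtlety is recognising that the product structure \eqref{sc46} converts the quadratic double integrals on the right-hand side of \eqref{sc11} into an honest square, and that $\vartheta_A$ is the right truncated weight to isolate the tail while keeping $\tilde\vartheta_A$ non-positive so that all discarded terms have the favourable sign.
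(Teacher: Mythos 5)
Your proof is correct and follows exactly the same route as the paper: test with $\vartheta\equiv 1$ for \eqref{sc50}, and with $\vartheta_A(x)=\min\{x,A\}$ together with the sign information from \eqref{sc13} and the factorization $\tilde K_n(x,y)=r_n(x)r_n(y)$ for \eqref{sc51}. No gaps.
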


\begin{proof} On the one hand, the bound \eqref{sc50} readily follows from \eqref{sc11} (with $\vartheta\equiv 1$), \eqref{sc48}, and the non-negativity of $g_n$. On the other hand, let $A>0$ and define $\vartheta_A(x) := \min\{x,A\}$ for $x>0$ as in the proof of Proposition~\ref{pr.sc1}. Owing to \eqref{sc13}, we deduce from \eqref{sc11} and the non-negativity of $g_n$ that 
\begin{align*}
\int_0^t \int_A^\infty \int_A^\infty \tilde{K}_n(x,y) g_n(s,x) g_n(s,y)\ dydxds & \le \frac{2}{A} \int_0^\infty \vartheta_A(x) f^{in}(x)\ dx \le \frac{2 M_1(f^{in})}{A}\ .
\end{align*}
Combining \eqref{sc48} and the above inequality gives \eqref{sc51}. 
\end{proof}

We next derive the counterpart of the equicontinuity property established in Lemma~\ref{le.sc2}.

\begin{lemma}\label{le.sc5} There is a modulus of continuity $\omega$ (that is, a function $\omega: (0,\infty)\to [0,\infty)$ satisfying $\omega(z)\to 0$ as $z\to 0$) such that
\begin{equation}
\| g_n(t) - g_n(s)\|_1 \le \omega(t-s)\ , \quad 0 \le s \le t\ , \quad n\ge 1\ . \label{sc52}
\end{equation}
\end{lemma}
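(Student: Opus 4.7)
My plan is to show that $\partial_t g_n$ is uniformly bounded in $L^1(0,\infty)$ by a time-dependent factor controlled through Lemma~\ref{le.sc4}, and then split the key integral into a near part and a far part so that the specific form \eqref{sc48} can be exploited.

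First I would compute, starting from equation \eqref{sc1} with kernel $\tilde{K}_n$ given by \eqref{sc48}, that
\[
\|\partial_t g_n(\sigma)\|_1 \le \frac{3}{2}\int_0^\infty \int_0^\infty \tilde{K}_n(x,y) g_n(\sigma,x) g_n(\sigma,y)\,dy\,dx = \frac{3}{2}\,\rho_n(\sigma)^2,
\]
where $\rho_n(\sigma):=\int_0^\infty r_n(x)\,g_n(\sigma,x)\,dx$. Integrating in time yields
\[
\|g_n(t)-g_n(s)\|_1 \le \frac{3}{2}\int_s^t \rho_n(\sigma)^2\,d\sigma,
\]
so the task reduces to controlling $\int_s^t \rho_n^2\,d\sigma$ by a quantity that tends to zero with $t-s$, uniformly in $n$.

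Next, for any $A>0$, I would split $\rho_n(\sigma) = \rho_n^A(\sigma)+\tilde\rho_n^A(\sigma)$ with
\[
\rho_n^A(\sigma):=\int_0^A r_n(x)\,g_n(\sigma,x)\,dx,\qquad \tilde\rho_n^A(\sigma):=\int_A^\infty r_n(x)\,g_n(\sigma,x)\,dx.
\]
Since $r$ is continuous, $R_A:=\sup_{[0,A]} r$ is finite, and $r_n\le r$, so $\rho_n^A(\sigma)\le R_A\,M_0(g_n(\sigma))\le R_A\,M_0(f^{in})$ by \eqref{sc49}. For the tail, the key estimate \eqref{sc51} of Lemma~\ref{le.sc4} gives
\[
\int_0^\infty \tilde\rho_n^A(\sigma)^2\,d\sigma \le \frac{2\,M_1(f^{in})}{A}.
\]
Using $(a+b)^2\le 2a^2+2b^2$, I obtain
\[
\int_s^t \rho_n(\sigma)^2\,d\sigma \le 2R_A^2\,M_0(f^{in})^2\,(t-s) + \frac{4\,M_1(f^{in})}{A},
\]
hence
\[
\|g_n(t)-g_n(s)\|_1 \le 3 R_A^2 M_0(f^{in})^2(t-s) + \frac{6 M_1(f^{in})}{A}.
\]

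Finally, defining
\[
\omega(\tau):=\inf_{A>0}\Bigl\{ 3 R_A^2 M_0(f^{in})^2\tau + \frac{6 M_1(f^{in})}{A}\Bigr\},
\]
I observe that $\omega(\tau)\to 0$ as $\tau\to 0$: given $\varepsilon>0$, pick $A$ large so that the second term is below $\varepsilon/2$, then pick $\tau$ small (depending on the now-fixed $A$, hence on $R_A$) so the first term is below $\varepsilon/2$. This yields \eqref{sc52}. The delicate point, and where the hypothesis \eqref{sc46} really enters, is the necessity of the space--time $L^2$ bound \eqref{sc51}: without any growth control on $r$, one cannot replace $\rho_n^2$ by a pointwise-in-time bound, and it is precisely the gain of an extra integration in $\sigma$ furnished by Lemma~\ref{le.sc4} that lets the tail contribution be made small uniformly in $s,t$ and $n$.
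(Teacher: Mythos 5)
Your proof is correct, and it takes a genuinely different route from the paper's. Both arguments ultimately rest on the space-time estimates of Lemma~\ref{le.sc4} together with a truncation in the size variable, but the decomposition is organized differently. The paper splits $\|g_n(t)-g_n(s)\|_1$ at size $R$: the tail $\int_R^\infty$ is bounded directly by $2M_1(f^{in})/R$ via \eqref{sc49}, while the near part $\int_0^R$ is estimated through the PDE using the weighted sup $m(R)=\sup_{(0,R)} r(x)/(1+x)$, the Cauchy--Schwarz inequality in time, and the full bound \eqref{sc50}; this produces a bound of the form $C\,m(R)\sqrt{t-s}+C/R$, from which the modulus is extracted via the generalized inverse of $R\mapsto Rm(R)$. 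You instead first pass through the uniform bound $\|\partial_t g_n\|_1\le\tfrac32\rho_n^2$ with $\rho_n=\int r_n g_n$, then truncate $\rho_n$ at size $A$: the near part is bounded pointwise in time by $R_A M_0(f^{in})$ and the tail by \eqref{sc51} alone, giving $3R_A^2 M_0(f^{in})^2(t-s)+6M_1(f^{in})/A$ and a modulus defined as an infimum over $A$. Your route is somewhat shorter: it does not need \eqref{sc50}, it avoids introducing $m(R)$ and the generalized inverse, and its near-part contribution is $O(t-s)$ rather than $O(\sqrt{t-s})$, which is immaterial for establishing a modulus of continuity. Both proofs exploit the product form of $\tilde K_n$ in the same essential way to gain the extra time integration that compensates for the lack of a growth bound on $r$.
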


\begin{proof} For $R>0$ we introduce 
$$
m(R) := \sup_{x\in (0,R)}\left\{ \frac{r(x)}{1+x} \right\}\ , 
$$
which is well-defined according to the continuity of $r$. We infer from \eqref{sc1} and Fubini's theorem that
\begin{align*}
\int_0^R |g_n(t,x)-g_n(s,x)|\ dx & \le \int_s^t \int_0^R \int_y^R \tilde{K}_n(y,x-y) g_n(\sigma,x-y) g_n(\sigma,y)\ dxdyd\sigma \\
& \qquad + \int_s^t \int_0^R \int_0^\infty \tilde{K}_n(x,y) g_n(\sigma,x) g_n(\sigma,y)\ dydxd\sigma \\
& = \int_s^t \int_0^R \int_0^{R-y} r_n(x) r_n(y) g_n(\sigma,x) g_n(\sigma,y)\ dxdyd\sigma \\
& \qquad + \int_s^t \int_0^R \int_0^\infty r_n(x) r_n(y) g_n(\sigma,x) g_n(\sigma,y)\ dydxd\sigma \\
& \le 2 \int_s^t \int_0^R \int_0^\infty r_n(x) r_n(y) g_n(\sigma,x) g_n(\sigma,y)\ dydxd\sigma\ .
\end{align*}
We next use \eqref{sc49}, H\"older's inequality, and \eqref{sc50} to obtain
\begin{align*}
& \int_0^R |g_n(t,x)-g_n(s,x)|\ dx \\ 
& \qquad \le 2 \int_s^t m(R) \|g_n(\sigma)\|_{1,1} \int_0^\infty r_n(y) g_n(\sigma,y)\ dyd\sigma \\
& \qquad \le 2 m(R) \|f^{in}\|_{1,1} \sqrt{t-s} \left[ \int_s^t \left( \int_0^\infty r_n(y) g_n(\sigma,y)\ dy \right)^2 d\sigma \right]^{1/2} \\
& \qquad \le \left( 2 \|f^{in}\|_{1,1} \right)^{3/2} m(R) \sqrt{t-s}\ .
\end{align*}
Combining \eqref{sc49} and the above inequality leads us to
\begin{align*}
\| g_n(t) - g_n(s)\|_1 & \le \int_0^R |g_n(t,x)-g_n(s,x)|\ dx + \int_R^\infty (g_n(t,x)+g_n(s,x))\ dx \\
& \le \left( 2 \|f^{in}\|_{1,1} \right)^{3/2} m(R) \sqrt{t-s} + \frac{2}{R} M_1(f^{in})\ ,
\end{align*}
hence
\begin{equation}
\| g_n(t) - g_n(s)\|_1 \le C_4 \frac{1+Rm(R) \sqrt{t-s}}{R} \label{sc53}
\end{equation}
for some positive constant $C_4$ depending only on $f^{in}$.

Now, $R\mapsto R m(R)$ is a non-decreasing and continuous function such that $R m(R)\to 0$ as $R\to 0$, $Rm(R)>0$ for $R>0$, and $R m(R) \to\infty$ as $R\to \infty$. Introducing its generalized inverse 
$$
Q(z) := \inf\left\{ R\ge 0\ :\ Rm(R) \ge z \right\}\ , \quad z\ge 0\ ,
$$
the function $Q$ is also non-decreasing with 
$$
Q(0)=0\ , \quad Q(z)>0 \;\;\text{ for }\;\; z>0\ , \;\;\text{ and }\;\; Q(z)\to\infty \;\;\text{ as }\;\; z\to\infty\ . 
$$
Setting
$$
\frac{1}{\omega(z)} := \frac{3}{4} Q\left( \frac{1}{z} \right)\ , \quad z>0\ ,
$$
and choosing $R=1/\omega(\sqrt{t-s})$ in \eqref{sc53} we end up with
$$
\| g_n(t) - g_n(s)\|_1 \le 2C_4 \omega(\sqrt{t-s})\ .
$$
The properties of $Q$ ensure that $2C_4 \omega$ is a modulus of continuity and the proof of Lemma~\ref{le.sc5} is complete. 
\end{proof}

\begin{proof}[Proof of Theorem~\ref{th.sc3}] The proof proceeds along the same lines as that of Theorem~\ref{th.sc1}, the control on the behaviour for large $x$ and the time equicontinuity being provided by Lemma~\ref{le.sc4} and Lemma~\ref{le.sc5} instead of \eqref{sc20} and Lemma~\ref{le.sc2}. Note that Lemma~\ref{le.sc1} is still valid owing to the local boundedness of $r$. 
\end{proof}

\begin{remark}\label{re.sc2}
As in \cite{Lt00}, it is possible to extend Theorem~\ref{th.sc3} to perturbations of product kernels of the form $K(x,y) = r(x) r(y) + \tilde{K}(x,y)$ provided $0\le \tilde{K}(x,y) \le \kappa_1 r(x) r(y)$ for $x>0$, $y>0$, and some $\kappa_1>0$.
\end{remark}

Another peculiar extension of Theorem~\ref{th.sc3} is the possibility of constructing mass-conserving solutions for coagulation kernels of the form \eqref{sc46} satisfying $r(x)/\sqrt{x}\to\infty$ as $x\to\infty$. 

\begin{proposition}\label{pr.sc6}
Assume that the coagulation kernel $K$ satisfies \eqref{sc46} and that $r\in C([0,\infty))\cap C^1((0,\infty))$ is a concave and positive function such that
\begin{equation}
\int_1^\infty \frac{dx}{r(x)^2} = \infty\ . \label{sc100}
\end{equation}
Consider an initial condition $f^{in}$ satisfying \eqref{sc18} together with $f^{in}\in L_2^1(0,\infty)$. Then there exists a solution $f$ to \eqref{sc1}-\eqref{sc2} such that $M_1(f(t))=M_1(f^{in})$ for each $t\ge 0$.
\end{proposition}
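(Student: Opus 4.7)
The plan is to reuse the construction from Theorem~\ref{th.sc3}, which provides the desired solution $f$ as the weak-$L^1$ limit of the approximating sequence $(g_n)_{n\ge 1}$ associated with the truncated kernel $\tilde{K}_n$ from \eqref{sc48}, with $M_1(g_n(t))=M_1(f^{in})$ for all $n\ge 1$ and $t\ge 0$ by \eqref{sc49}. To promote the inequality $M_1(f(t))\le M_1(f^{in})$ delivered by Theorem~\ref{th.sc3} into an equality, it suffices to establish that $\{x\mapsto x g_n(t,x)\,:\,n\ge 1\}$ is uniformly integrable in $L^1(0,\infty)$ for each $t\ge 0$, as one may then pass to the limit in $\int_0^\infty x g_n(t,x)\,dx = M_1(f^{in})$ via the Dunford-Pettis theorem (Theorem~\ref{thb103}). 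A convenient quantitative criterion is an $n$-uniform bound on $M_2(g_n(t))$ on every compact time interval, since the estimate $\int_R^\infty x g_n(t,x)\,dx \le M_2(g_n(t))/R$ then rules out escape to infinity, while the uniform integrability of $(g_n(t))_{n\ge 1}$ already available in the proof of Theorem~\ref{th.sc3} controls concentration on sets of small measure.

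The central step is therefore an Osgood-type argument for $M_2(g_n(t))$. Taking $\vartheta(x)=x^2$ formally in \eqref{sc11} and using $(x+y)^2-x^2-y^2=2xy$, one expects
\begin{equation*}
\frac{d}{dt} M_2(g_n(t)) = V_n(t)^2 \,, \qquad V_n(t) := \int_0^\infty x\,r_n(x)\,g_n(t,x)\,dx \,,
\end{equation*}
which is rigorously justified by the choice $\vartheta_A(x)=\min\{x^2,A\}$ in \eqref{sc11} followed by dominated convergence as $A\to\infty$; this relies on the (possibly $n$-dependent) bound $M_2(g_n(t))\le M_2(f^{in})+n^2 M_1(f^{in})^2 t$ coming from $\tilde{K}_n\le n^2$ and $f^{in}\in L_2^1(0,\infty)$. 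Set $m:=M_1(f^{in})$, the case $m=0$ being trivial, and introduce on $(0,\infty)$ the probability measure $d\nu_{n,t}(x):=m^{-1} x g_n(t,x)\,dx$, for which $\int_0^\infty x\,d\nu_{n,t} = M_2(g_n(t))/m$. Since $r$ is concave and positive on $[0,\infty)$ (hence non-decreasing) and $r_n=\min\{r,n\}$ is concave as the infimum of two concave functions, Jensen's inequality yields
\begin{equation*}
\frac{V_n(t)}{m} = \int_0^\infty r_n(x)\,d\nu_{n,t}(x) \le r_n\!\left(\frac{M_2(g_n(t))}{m}\right) \le r\!\left(\frac{M_2(g_n(t))}{m}\right)\,.
\end{equation*}

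Setting $y_n(t):=M_2(g_n(t))/m$, the two preceding displays collapse into the scalar Osgood-type differential inequality $y_n'(t) \le m\,r(y_n(t))^2$, and integration gives
\begin{equation*}
\int_{y_n(0)}^{y_n(t)} \frac{dy}{r(y)^2} \le m\,t \,, \qquad t\ge 0\,, \quad n\ge 1\,.
\end{equation*}
The datum $y_n(0)=M_2(f^{in})/m$ is independent of $n$, and the positivity and continuity of $r$ on $[y_n(0),\infty)$ together with \eqref{sc100} guarantee that $H(z):=\int_{y_n(0)}^z dy/r(y)^2$ is strictly increasing and continuous on $[y_n(0),\infty)$ with range $[0,\infty)$. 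Consequently $y_n(t)\le H^{-1}(mt)$ and one obtains the $n$-uniform bound $M_2(g_n(t))\le m\,H^{-1}(mt)$ on each compact time interval. Uniform integrability of $(xg_n(t))_{n\ge 1}$ follows as described, and a standard passage to the limit in $M_1(g_n(t))=M_1(f^{in})$ yields $M_1(f(t))=M_1(f^{in})$ for all $t\ge 0$.

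The main obstacle is the rigorous deployment of Jensen's inequality: one must first secure finiteness of $M_2(g_n(t))$ for each $(n,t)$ so that $\nu_{n,t}$ is a genuine probability measure with finite mean, then exploit the concavity of $r_n$ (which holds despite the truncation) to reach the correct inequality, and finally make sure that the Osgood integrability assumption \eqref{sc100} is transported from the primitive of $1/r^2$ near infinity to the inverse function $H^{-1}$ that produces the $n$-uniform bound. Once these technical points are handled, the remaining steps—verifying the uniform integrability criterion and identifying the weak limit of $(xg_n(t))_{n\ge 1}$ with $xf(t)$—follow from arguments already present in the proof of Theorem~\ref{th.sc3}.
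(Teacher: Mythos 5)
Your proposal is correct and follows essentially the same route as the paper's proof: reuse the truncated-kernel construction from Theorem~\ref{th.sc3}, derive $\frac{d}{dt}M_2(g_n(t)) = \bigl(\int_0^\infty x\,r_n(x)\,g_n(t,x)\,dx\bigr)^2$ from \eqref{sc11}, apply Jensen's inequality using concavity of $r_n$, and invoke the Osgood condition \eqref{sc100} (in the paper, phrased as global solvability of a comparison ODE) to obtain a uniform-in-$n$, locally-in-time bound on $M_2(g_n(t))$, from which mass conservation for the limit follows by controlling the tail of the first moment.
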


The function $r(x) = \sqrt{2+x} \left( \ln{(2+x)} \right)^{\alpha}$ satisfies the assumptions of Proposition~\ref{pr.sc6} for $\alpha\in (0,1/2]$.

\begin{proof}[Proof of Proposition~\ref{pr.sc6}] We keep the notations of the proof of Theorem~\ref{th.sc3}, the existence part of Proposition~\ref{pr.sc6} being a consequence of it. To show that the solution $f$ constructed in the proof of Theorem~\ref{th.sc3} is mass-conserving, we check that \eqref{sc100} allows us to control the time evolution of the second moment $M_2(f(t))$ for all times $t\ge 0$. Indeed, for $n\ge 1$, we deduce from \eqref{sc11} that 
$$
\frac{d}{dt}M_2(g_n(t)) = \left( \int_0^\infty r_n(x)\ x\ g_n(t,x)\ dx \right)^2\ .
$$
Since $r$ is concave, so is $r_n$ and Jensen's inequality and \eqref{sc49} ensure that
$$
\frac{d}{dt}M_2(g_n(t)) \le M_1(f^{in})^2 \ \left[ r\left( \frac{M_2(g_n(t))}{M_1(f^{in})} \right) \right]^2\ .
$$
Now, the assumption \eqref{sc100} guarantees that the ordinary differential equation
$$
\frac{dY}{dt} = M_1(f^{in})^2 \ \left[ r\left( \frac{Y}{M_1(f^{in})}
\right) \right]^2\ , \quad t\ge 0\ ,
$$
has a global solution $Y\in C^1([0,\infty))$ satisfying $Y(0)=M_2(f^{in})$ which is locally bounded. The comparison principle then implies that $M_2(g_n(t))\le Y(t)$ for all $t\ge 0$ and $n\ge 1$. We next use the convergence of $(g_n)_{n\ge 1}$ towards $f$ to conclude that $M_2(f(t))\le Y(t)$ for all $t\ge 0$. We finally combine this information with \eqref{sc49} to show that $M_1(f(t)) = M_1(f^{in})$ for $t>0$ and complete the proof. 
\end{proof}

\subsection{Gelation}
\label{sec:33}

In Section~\ref{sec:322} we have shown the existence of mass-conserving solutions to \eqref{sc1}-\eqref{sc2} for coagulation kernels satisfying the growth condition \eqref{sc41}. As already mentioned this property fails to be true in general for coagulation kernels which grows sufficiently fast for large $x$ and $y$, a fact which has been known/conjectured since the early eighties \cite{EZH84, HEZ83, LT82, Zi80} but only proved recently in \cite{EMP02, Je98}. In fact, the occurrence of gelation was first shown for the multiplicative kernel $K_2(x,y)=xy$ by an elementary argument \cite{LT81} and conjectured to take place for coagulation kernels $K$ satisfying $K(x,y)\ge \kappa_m (xy)^{\lambda/2}$ for some $\lambda\in (1,2]$ and $\kappa_m>0$ \cite{EZH84, HEZ83, LT82, Zi80}. This conjecture was supported by a few explicit solutions constructed in \cite{dC98, vDE85, Le83}. A first breakthrough was made in \cite{Je98} where a stochastic approach is used to show that, for a dense set of initial data, there exists at least one gelling solution to the discrete coagulation equations. A definitive and positive answer is provided in \cite{EMP02} where the occurrence of gelation in finite time is proved for all weak solutions starting from an arbitrary initial condition $f^{in}$ satisfying \eqref{sc18} and $f^{in}\not\equiv 0$. 

More precisely, let $K$ be a non-negative and symmetric function such that 
\begin{equation}
K(x,y) \ge r(x) r(y)\ , \quad (x,y)\in (0,\infty)\times (0,\infty)\ , \label{sc54}
\end{equation}
for some non-negative function $r$. Consider an initial condition $f^{in}$, $f^{in}\not\equiv 0$, satisfying \eqref{sc18} and let
$$
f\in C([0,\infty);L^1(0,\infty)) \cap L^\infty(0,\infty;L_1^1(0,\infty))\ , \quad f\ge 0\ ,
$$
be a solution to \eqref{sc1}-\eqref{sc2} satisfying \eqref{sc23} and \eqref{sc24} (such a solution exists for a large class of coagulation kernels, see Theorem~\ref{th.sc1} and Theorem~\ref{th.sc3}).

\begin{theorem}[\cite{EMP02}]\label{th.sc4}
If there are $\lambda\in (1,2]$ and $\kappa_m>0$ such that
$r(x)=\kappa_m x^{\lambda/2}$, $x>0$, then gelation occurs in finite time, that is, there is $T_{gel}\in [0,\infty)$ such that
$$
M_1(f(t)) < M_1(f^{in}) \;\;\text{ for }\;\; t>T_{gel}\ .
$$
\end{theorem}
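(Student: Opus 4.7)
The plan is to argue by contradiction: assume $M_1(f(t)) = M_1^{in} := M_1(f^{in})$ for all $t \geq 0$, and show that the super-linear moment $M_\lambda(f(t))$ must satisfy an autonomous differential inequality forcing finite-time blow-up, which is incompatible with this assumption. The key algebraic input is the elementary bound
$$(x+y)^\mu - x^\mu - y^\mu \geq \tfrac{\mu-1}{2}\bigl(xy^{\mu-1} + x^{\mu-1}y\bigr), \quad x,y \geq 0, \; \mu \in (1,2],$$
obtained by integrating $\mu t^{\mu-1}$ from $x$ to $x+y$ when $x \geq y$ and using symmetry. Setting $\mu = \lambda$ and combining with $K(x,y) \geq \kappa_m^2(xy)^{\lambda/2}$, the formal moment identity \eqref{in7} with $\vartheta(x) = x^\lambda$ yields
$$\frac{d}{dt} M_\lambda(f) \geq \tfrac{(\lambda-1)\kappa_m^2}{2}\, M_{1+\lambda/2}(f)\, M_{3\lambda/2-1}(f).$$
The Cauchy--Schwarz inequality $M_\lambda(f)^2 \leq M_{1+\lambda/2}(f)\, M_{3\lambda/2-1}(f)$, valid because the arithmetic mean of the two exponents is $\lambda$, then collapses the above into the closed-form inequality
$$\frac{d}{dt} M_\lambda(f) \geq \tfrac{(\lambda-1)\kappa_m^2}{2}\, M_\lambda(f)^2,$$
whose integration gives blow-up at a finite time $T_* \leq 2\bigl((\lambda-1)\kappa_m^2 M_\lambda(f(0))\bigr)^{-1}$ whenever the initial moment is positive and finite.

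To make this rigorous within the weak formulation \eqref{sc23}, which only allows bounded test functions, I will work with the approximating mass-conserving solutions $f_n$ associated to the truncated kernel $K_n := K \wedge n$ from Proposition~\ref{pr.sc1}. Setting $R_n := (n/\kappa_m^2)^{1/\lambda}$, we have $K_n \equiv K$ on $\{x,y \leq R_n\}$, so the above formal derivation can be run rigorously for $f_n$ on this region (after reduction to initial data of compact support, where all moments of $f_n$ are automatically finite); the remaining tail contributions are estimated using $M_0(f_n) \leq M_0^{in}$ and $M_1(f_n) = M_1^{in}$. Passing to the limit $n \to \infty$ via the weak-$L^1$ compactness of the proofs of Theorems~\ref{th.sc1} and~\ref{th.sc3} transfers the differential inequality, and hence the blow-up of $M_\lambda$, to the limiting $f$. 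As a complementary route providing the contradiction with mass conservation directly, one may also test \eqref{sc23} against the bounded $\vartheta_A(x) := \min\{x,A\}$ and use $\tilde\vartheta_A(x,y) \leq -A$ on $\{x\geq A\}\cap\{y\geq A\}$ from \eqref{sc13} to derive, under the assumed mass conservation, the quantitative tail estimate
$$\frac{A\kappa_m^2}{2}\int_0^t \left( \int_A^\infty x^{\lambda/2} f(s,x)\, dx \right)^{\!2} ds \leq M_1^{in}, \quad A > 0,\; t \geq 0,$$
which becomes inconsistent with the lower bound forced by the moment blow-up.

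The main obstacle is the interplay between the formal differential inequality and its rigorous implementation: truncation of $x^\lambda$ (or of the kernel) introduces boundary contributions to $\tilde\vartheta$ whose negative parts must be shown to be subordinate to the positive quadratic driving term, and it is precisely the Cauchy--Schwarz step that collapses the two-variable structure of the coagulation rate into a scalar moment obeying an autonomous inequality. The case $M_\lambda(f^{in}) = +\infty$, in which the argument has no finite positive starting point, is handled by truncating the initial datum to $f^{in}\mathbf{1}_{(0,k)}$, applying the preceding argument to the corresponding solution to obtain $T_{\mathrm{gel}}^{(k)} < \infty$, and then transferring gelation back to $f$ via the monotonicity of the tail mass with respect to the initial datum.
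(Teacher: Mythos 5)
Your main route is the classical ``moment blow-up'' heuristic, and the algebraic ingredients you list are sound: the subadditivity defect estimate for $(x+y)^\lambda-x^\lambda-y^\lambda$ and the Cauchy--Schwarz interpolation $M_\lambda^2\le M_{1+\lambda/2}\,M_{3\lambda/2-1}$ are both correct. However, this route has two genuine gaps that make it fail here.

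First, even granting the formal differential inequality $\frac{d}{dt}M_\lambda \ge c\,M_\lambda^2$ and hence blow-up of $M_\lambda$ in finite time, this does \emph{not} contradict the standing hypothesis $M_1(f(t))=M_1(f^{in})$. The weak solution only lives in $L^\infty(0,\infty;L^1_1(0,\infty))$; nothing in \eqref{sc23} or \eqref{sc24} prevents $M_\lambda(f(t))$ from being $+\infty$ for $t$ beyond some finite time while $M_1$ remains constant. Your sentence ``which is incompatible with this assumption'' is therefore asserting a contradiction that is never derived. The closing appeal to the tail estimate being ``inconsistent with the lower bound forced by the moment blow-up'' is likewise unsubstantiated: the tail estimate bounds $\int_0^t\bigl(\int_A^\infty x^{\lambda/2}f\bigr)^2 ds$ in terms of $M_1^{in}/A$, which involves $M_{\lambda/2}$-type quantities, and there is no simple comparison with $M_\lambda$. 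Second, the rigorous implementation with truncated kernels has unresolved boundary terms. For $K_n=K\wedge n$ the lower bound $K_n\ge\kappa_m^2(xy)^{\lambda/2}$ fails outside $\{x,y\le R_n\}$, and when you test with $\min\{x,A\}^\lambda$ the difference $\tilde\vartheta$ is negative on a large region ($x\ge A$ or $y\ge A$), so you cannot simply restrict the quadratic form to the good set and close the inequality; you flag this as ``the main obstacle'' but do not resolve it, and in fact for the truncated (bounded) kernel all moments of $f_n$ stay finite for all time, so no contradiction can come out of $M_\lambda(f_n)$ alone without a uniform-in-$n$ inequality that you have not obtained.

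Your ``complementary route'' is where the paper actually goes, and it is the correct one, but you stop one step short. Testing with $\vartheta_A(x)=\min\{x,A\}$ and using \eqref{sc13} indeed gives the tail estimate \eqref{sc57},
$$
\int_0^t\left(\int_A^\infty r(x)\,f(s,x)\,dx\right)^2 ds\le\frac{2M_1(f^{in})}{A}\,,
$$
but by itself this proves nothing about $M_1(f(t))$. The missing key step is Proposition~\ref{pr.sc2}: one integrates this family of estimates in $A$ against $\xi'(A)$, uses Fubini and Cauchy--Schwarz with the weight $\sqrt{A}$, and concludes
$$
\int_0^t\left(\int_0^\infty r(x)\xi(x)f(s,x)\,dx\right)^2 ds\le 2 I_\xi^2\,M_1(f^{in})
$$
whenever $I_\xi:=\int_0^\infty \xi'(A)A^{-1/2}\,dA<\infty$. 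Choosing $\xi(A)=(A-1)_+^{(2-\lambda)/2}$ (the shift at $1$ is essential so that $I_\xi<\infty$ even though $\xi(A)=A^{(2-\lambda)/2}$ is not admissible near $A=0$) and combining with a separate estimate for $\int_0^2 x f$ shows $t\mapsto M_1(f(t))\in L^2(0,\infty)$, which is the clean, direct contradiction with mass conservation. This avoids any reference to $M_\lambda$ or to a nonlinear differential inequality, works entirely with bounded test functions and the weak formulation, and is uniform across the class of weak solutions. To turn your proposal into a proof you should drop the $M_\lambda$ blow-up argument and carry out the $\xi$-integration of Proposition~\ref{pr.sc2} instead.
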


The cornerstone of the proof of Theorem~\ref{th.sc4} is the following estimate. 

\begin{proposition}[\cite{EMP02}]\label{pr.sc2}
Let $\xi~:[0,\infty)\longrightarrow [0,\infty)$ be a non-decreasing differentiable function satisfying $\xi(0)=0$ and 
\begin{equation}
I_\xi := \int_0^\infty \xi'(A)\ A^{-1/2}\ dA < \infty\ . \label{sc55}
\end{equation}
Then, for $t>0$, 
\begin{equation}
\int_{0}^{t} \left( \int_0^\infty r(x) \xi(x)\ f(s,x)\ dx \right)^2\ ds \le 2 I_\xi^2\ M_1(f^{in})\ . \label{sc56}
\end{equation}
\end{proposition}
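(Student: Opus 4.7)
The plan is to exploit the weak formulation \eqref{sc23} with the bounded test function $\vartheta_A(x):=\min\{x,A\}$ (as already used in Step~3 of the proof of Proposition~\ref{pr.sc1}) to extract a one-parameter family of dissipation estimates, and then reconstruct the quantity on the left-hand side of \eqref{sc56} by writing $\xi(x)=\int_0^x\xi'(A)\,dA$ and applying Minkowski's integral inequality.

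First I would plug $\vartheta=\vartheta_A$ into \eqref{sc23}. Formula \eqref{sc13} shows that $\tilde{\vartheta}_A\le 0$ everywhere, and in particular $\tilde{\vartheta}_A(x,y)=-A$ on $\{x\ge A\}\times\{y\ge A\}$. Discarding the non-positive contribution from the other regions and using the hypothesis $K(x,y)\ge r(x)r(y)$ and the non-negativity of $f$, I obtain
\begin{equation*}
\frac{A}{2}\int_0^t\left(\int_A^\infty r(x)\,f(s,x)\,dx\right)^{\!2}\,ds \le \int_0^\infty\vartheta_A(x)\bigl(f^{in}(x)-f(t,x)\bigr)\,dx\le M_1(f^{in}),
\end{equation*}
since $\vartheta_A(x)\le x$ and $f(t,\cdot)\ge 0$. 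Dividing by $A/2$ yields the key auxiliary estimate
\begin{equation*}
\left(\int_0^t\left(\int_A^\infty r(x)f(s,x)\,dx\right)^{\!2}ds\right)^{\!1/2}\le \sqrt{\frac{2 M_1(f^{in})}{A}},\qquad A>0.
\end{equation*}

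Next, using $\xi(0)=0$ and writing $\xi(x)=\int_0^\infty \xi'(A)\,\mathbf{1}_{\{A<x\}}\,dA$, Fubini's theorem gives, for every $s\ge 0$,
\begin{equation*}
\int_0^\infty r(x)\xi(x)f(s,x)\,dx=\int_0^\infty\xi'(A)\left(\int_A^\infty r(x)f(s,x)\,dx\right)dA.
\end{equation*}
Taking the $L^2(0,t)$-norm in $s$ and applying Minkowski's integral inequality, followed by the auxiliary bound above, yields
\begin{equation*}
\left(\int_0^t\!\left(\int_0^\infty r(x)\xi(x)f(s,x)\,dx\right)^{\!2}ds\right)^{\!1/2} \!\!\le \int_0^\infty\!\xi'(A)\sqrt{\frac{2M_1(f^{in})}{A}}\,dA=\sqrt{2M_1(f^{in})}\,I_\xi.
\end{equation*}
Squaring gives precisely \eqref{sc56}.

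The only delicate point is the legitimacy of using $\vartheta_A$ in \eqref{sc23}: since $\vartheta_A\in L^\infty(0,\infty)$ this is directly allowed, so no approximation argument is required. The Fubini exchange is justified by the non-negativity of $\xi'$, $r$, and $f$, and Minkowski's integral inequality applies since all integrands are non-negative; the finiteness of the right-hand side $I_\xi$ ensures that the intermediate integrals do not degenerate. I expect no real obstacle beyond properly identifying which region of the $(x,y)$-plane in \eqref{sc13} carries the useful negative contribution $-A$ and discarding the rest.
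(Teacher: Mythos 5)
Your proof is correct and follows essentially the same route as the paper: you derive the key auxiliary bound $\int_0^t\bigl(\int_A^\infty r(x)f(s,x)\,dx\bigr)^2 ds \le 2M_1(f^{in})/A$ exactly as in the paper's Proof of Proposition~\ref{pr.sc2}, using $\vartheta_A=\min\{x,A\}$ in \eqref{sc23}, \eqref{sc13}, and \eqref{sc54}. The only variation is that you finish with Minkowski's integral inequality where the paper uses a weighted Cauchy--Schwarz estimate with the splitting $\xi'(A)=\xi'(A)^{1/2}A^{-1/4}\cdot\xi'(A)^{1/2}A^{1/4}$; the two steps are interchangeable here and yield the same constant $2I_\xi^2 M_1(f^{in})$.
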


Let us mention at this point that, besides paving the way to a proof of the occurrence of gelation in finite time, other important consequences can be drawn from Proposition~\ref{pr.sc2} due to the possibility of choosing different functions $\xi$. These consequences include temporal decay estimates for large times as well as more precise information on $f$ across the gelation time \cite{EMP02}. 

Before proving Proposition~\ref{pr.sc2}, let us sketch how to use it to establish Theorem~\ref{th.sc4}. Since $r(x) = \kappa_m x^{\lambda/2}$, a close look at \eqref{sc56} indicates that the choice $\xi(x) = x^{(2-\lambda)/2}$ gives
$$
\int_0^t M_1(f(s))^2\ ds = \int_{0}^{t} \left( \int_0^\infty x^{\lambda/2}\ \xi(x)\ f(s,x)\ dx \right)^2\ ds \le \frac{2 I_\xi^2\ M_1(f^{in})}{\kappa_m^2}
$$
for all $t>0$. Consequently, $t\mapsto M_1(f(t))$ belongs to $L^2(0,\infty)$ and thus $M_1(f(t))$ cannot remain constant throughout time evolution. However $\xi$ does not satisfy \eqref{sc55} as
$$
\int_1^\infty \xi'(A)\ A^{-1/2}\ dA < \infty \;\;\text{ but }\;\; \int_0^1 \xi'(A)\ A^{-1/2}\ dA = \infty\ .
$$
We shall see below that a suitable choice is $\xi(x) = (x-1)_+^{(2-\lambda)/2}$. 

\begin{proof}[Proof of Proposition~\ref{pr.sc2}]
Fix $A>0$ and take $\vartheta_A(x)=\min{\{x,A\}}$, $x>0$, in \eqref{sc23}. Recalling \eqref{sc13}, we deduce from the non-negativity of $f$ that
$$
\int_0^t \int_A^\infty \int_A^\infty K(x,y) f(s,x) f(s,y)\ dydxds \le \frac{2}{A} \int_0^\infty \vartheta_A(x) f^{in}(x)\ dx \le \frac{2 M_1(f^{in})}{A}\ .
$$
Using \eqref{sc54} we end up with
\begin{equation}
\int_0^t \left( \int_A^\infty r(x) f(s,x)\ dx \right)^2\ ds \le \frac{2M_1(f^{in})}{A}\ . \label{sc57}
\end{equation}
We then infer from \eqref{sc57}, Fubini's theorem, and Cauchy-Schwarz' inequality that
\begin{align*}
& \int_{0}^{t} \left( \int_0^\infty r(x) \xi(x)\ f(s,x)\ dx \right)^2\ ds \\
& \qquad = \int_{0}^{t} \left( \int_0^\infty \int_0^x r(x) \xi'(A)\ f(s,x)\ dAdx \right)^2\ ds \\
& \qquad = \int_{0}^{t} \left( \int_0^\infty \xi'(A) \int_A^\infty r(x) f(s,x)\ dxdA \right)^2\ ds \\
& \qquad \le I_\xi \int_0^t \int_0^\infty \xi'(A) \sqrt{A} \left( \int_A^\infty r(x) f(s,x)\ dx \right)^2 dAds \\
& \qquad \le 2 M_1(f^{in}) I_\xi \int_0^\infty \frac{\xi'(A)}{\sqrt{A}}\ dA\ ,
\end{align*}
hence \eqref{sc56}. 
\end{proof}

\begin{proof}[Proof of Theorem~\ref{th.sc4}] It first follows from \eqref{sc23} with $\vartheta\equiv 1$, \eqref{sc54}, the choice of $r$, and the non-negativity of $f$ that
$$
\int_0^t \left( \int_0^\infty x^{\lambda/2} f(s,x)\ dx \right)^2\ ds \le \frac{2 M_0(f^{in})}{\kappa_m^2}\ ,
$$
which implies, since $x \le 2^{(2-\lambda)/2} x^{\lambda/2}$ for $x\in (0,2)$,
\begin{equation}
\int_0^t \left( \int_0^2 x f(s,x)\ dx \right)^2\ ds \le \frac{2^{3-\lambda} M_0(f^{in})}{\kappa_m^2}\ . \label{sc58}
\end{equation}
We next take $\xi(A) = (A-1)_+^{(2-\lambda)/2}$, $A>0$, in Proposition~\ref{pr.sc2} and note that
$$
I_\xi = \frac{2-\lambda}{2} \int_1^\infty (A-1)^{-\lambda/2} A^{-1/2} \ dA < \infty
$$
as $\lambda>1$. Since $x-1\ge x/2$ for $x\ge 2$, we infer from \eqref{sc56} that 
\begin{align}
\frac{1}{2^{2-\lambda}} \int_{0}^{t} \left( \int_2^\infty x f(s,x)\ dx \right)^2\ ds & \le \int_{0}^{t} \left( \int_0^\infty x^{\lambda/2}\ (x-1)_+^{(2-\lambda)/2}\ f(s,x)\ dx \right)^2\ ds \nonumber \\
& \le \frac{2 I_\xi^2\ M_1(f^{in})}{\kappa_m^2}\ . \label{sc59}
\end{align}
Combining \eqref{sc58} and \eqref{sc59} implies that $t\mapsto M_1(f(t))$ belongs to $L^2(0,\infty)$ and thus $M_1(f(t))$ cannot remain constant throughout time evolution. Since $M_1(f(t))\le M_1(f^{in})$ for all $t>0$ by \eqref{sc24}, we conclude that $T_{gel}<\infty$. 
\end{proof}

Using the same approach, we can actually extend Theorem~\ref{th.sc4} to a slightly wider setting encompassing the power functions.

\begin{proposition}\label{pr.sc3}
Assume that $r\in C([0,\infty))\cap C^1((0,\infty))$ is a concave and positive function which satisfies also
\begin{equation}
\int_1^\infty \frac{dx}{x^{1/2}\ r(x)} < \infty \;\;\text{ and }\;\;
\lim_{x\to \infty}\ \frac{r(x)}{x^{1/2}} = \lim_{x\to \infty}\
\frac{x}{r(x)} = \infty\ , \label{sc60} 
\end{equation}
as well as $r(x)\ge \delta x$ for $x\in (0,1)$ for some $\delta>0$. Then $T_{gel} < \infty$. 
\end{proposition}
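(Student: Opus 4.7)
The plan is to mimic the proof of Theorem~\ref{th.sc4}, applying Proposition~\ref{pr.sc2} with a well-chosen weight $\xi$ to show that $t\mapsto M_1(f(t))$ belongs to $L^2(0,\infty)$. Since $M_1(f(t))\le M_1(f^{in})$ by \eqref{sc24} and $M_1(f^{in})>0$ for any non-trivial initial condition, this is incompatible with $M_1(f(t))\equiv M_1(f^{in})$ on $[0,\infty)$, whence $T_{gel}<\infty$.

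A preliminary step is to observe that the assumptions on $r$ force it to be non-decreasing on $[0,\infty)$: since $r'$ is non-increasing by concavity, the existence of some $x_0>0$ with $r'(x_0)<0$ would imply $r(x)\to -\infty$, contradicting $r>0$. I then set
$$
\xi(x):=\int_1^{\max\{x,1\}}\frac{dA}{r(A)}\,,
$$
which is absolutely continuous, non-decreasing, and satisfies $\xi(0)=0$ with weak derivative $\xi'(A)=\mathbf{1}_{(1,\infty)}(A)/r(A)$. The first half of \eqref{sc60} now reads exactly
$$
I_\xi=\int_1^\infty\frac{dA}{\sqrt{A}\,r(A)}<\infty\,.
$$
The proof of Proposition~\ref{pr.sc2} relies only on the identity $\xi(x)=\int_0^x\xi'(A)\,dA$ together with Fubini's theorem and the Cauchy--Schwarz inequality; it therefore extends verbatim to the present absolutely continuous $\xi$ (or one mollifies $\xi'$ and passes to the limit). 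This yields
$$
\int_0^t\left(\int_0^\infty r(x)\xi(x)f(s,x)\,dx\right)^2 ds\le 2 I_\xi^2\, M_1(f^{in})\,, \qquad t>0\,.
$$
The monotonicity of $r$ gives $\xi(x)\ge (x-1)/r(x)$ for $x\ge 1$ and hence $r(x)\xi(x)\ge x/2$ on $[2,\infty)$, which provides an $L^2(0,\infty)$ bound on $s\mapsto\int_2^\infty xf(s,x)\,dx$.

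The contribution of $(0,2)$ is handled exactly as in the proof of Theorem~\ref{th.sc4}: using \eqref{sc23} with $\vartheta\equiv 1$ together with $K(x,y)\ge r(x)r(y)$ gives
$$
\int_0^t\left(\int_0^\infty r(x)f(s,x)\,dx\right)^2 ds\le 2 M_0(f^{in})\,.
$$
By hypothesis $r(x)\ge\delta x$ on $(0,1)$, while the monotonicity of $r$ combined with $r>0$ gives $r(x)\ge r(1)>0$ on $[1,2]$; together these imply $x\le C_\star r(x)$ on $(0,2)$ with $C_\star:=\max\{1/\delta,\,2/r(1)\}$, which yields an $L^2(0,\infty)$ bound on $s\mapsto\int_0^2 xf(s,x)\,dx$. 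Adding the two contributions shows $M_1(f(\cdot))\in L^2(0,\infty)$ and concludes the argument.

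The main obstacle is balancing the two competing demands in the choice of $\xi$: $\xi'$ must decay rapidly enough at infinity to ensure $I_\xi<\infty$, yet slowly enough to keep $r\xi$ at least linear so as to dominate $x$ in $M_1$. The assumption \eqref{sc60} provides the former for the natural choice $\xi'=1/r$, while the concavity of $r$ (through the resulting monotonicity and the elementary inequality $\int_1^x dA/r(A)\ge(x-1)/r(x)$) provides the latter.
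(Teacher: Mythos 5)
Your proof is correct and follows the same overall strategy as the paper: apply Proposition~\ref{pr.sc2} with a tailored weight $\xi$ so that $r\xi$ dominates the identity at infinity, then control the contribution of small sizes via the elementary bound obtained from \eqref{sc23} with $\vartheta\equiv 1$ together with $r(x)\ge\delta x$ on $(0,1)$. The difference is the choice of $\xi$. The paper takes $\xi(x)=\bigl(x/r(x)-1/r(1)\bigr)_+$ and computes $I_\xi$ by integration by parts; the vanishing of the boundary term at infinity relies on $r(x)/\sqrt{x}\to\infty$, and the lower bound $r\xi\ge x/2$ on $[x_\star,\infty)$ uses $x/r(x)\to\infty$ to locate $x_\star$. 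Your choice $\xi(x)=\int_1^{\max\{x,1\}}dA/r(A)$ is arguably more transparent: $I_\xi$ is, by construction, exactly the integral assumed finite in \eqref{sc60}, no integration by parts is needed, and the lower bound $r(x)\xi(x)\ge x-1$ follows from the monotonicity of $r$, which you correctly deduce from concavity and positivity. In particular your argument never invokes the two limit conditions in \eqref{sc60}, so it is slightly more economical than the paper's. Your caveat about the non-smoothness of $\xi$ at $x=1$ is fair but inessential: the function $\xi(A)=(A-1)_+^{(2-\lambda)/2}$ used in the paper's proof of Theorem~\ref{th.sc4} has the same feature, and as you observe the derivation of \eqref{sc56} only uses $\xi(x)=\int_0^x\xi'(A)\,dA$ together with Fubini and Cauchy--Schwarz.
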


A typical example of function $r$ satisfying all the assumptions of Proposition~\ref{pr.sc3} is a positive and concave function behaving as $\sqrt{x} \left( \ln{x} \right)^{1+\alpha}$ for large $x$ for some $\alpha>0$.

\begin{proof} The proof is similar to that of Theorem~\ref{th.sc4}, the main difference being the choice of the function $\xi$ in the use of Proposition~\ref{pr.sc2}. As $r$ is concave and positive, the function $x\mapsto x/r(x)$ is non-decreasing and we set
$$
\xi(x) := \left( \frac{x}{r(x)} - \frac{1}{r(1)} \right)_+\ , \quad
x > 0\ .
$$ 
Then $\xi$ is a positive and non-decreasing differentiable function with $\xi(0)=0$. Moreover, \eqref{sc60} guarantees that
\begin{align*}
I_\xi & = \left[ \frac{A}{r(A)} A^{-1/2} \right]_{A=1}^{A=\infty} + \frac{1}{2}  \int_1^\infty \frac{A}{r(A)} A^{-3/2}\ dA \\
& = - \frac{1}{r(1)} + \frac{1}{2}\ \int_1^\infty \frac{dA}{r(A)\ A^{1/2}} < \infty\ .
\end{align*}
We are therefore in a position to apply Proposition~\ref{pr.sc2}. Since there is $x_\star>1 $ such that $x/r(x)\ge 2/r(1)$ for $x\ge x_\star$ by \eqref{sc60}, we deduce from \eqref{sc56} that 
\begin{equation}
\int_0^t \left( \int_{x_\star}^\infty x\ f(s,x)\ dx
\right)^2\ ds \le 8 I_\xi^2 M_1(f^{in})\ . \label{sc61}
\end{equation}
We finally infer from \eqref{sc23} with $\vartheta\equiv 1$, \eqref{sc54}, and the non-negativity of $f$ that 
$$
\int_0^t \left( \int_0^\infty r(x)\ f(s,x)\ dx
\right)^2\ ds \le 2 M_0(f^{in}) \ ,
$$
and the assumptions on $r$ ensure that there is $\delta_\star\in (0,\delta)$ such that $\delta_\star x\le r(x)$ for $x\in
(0,x_\star)$. Consequently,
$$
\int_0^t \left( \int_0^{x_\star} x f(s,x)\ dx \right)^2\ ds \le \frac{2 M_0(f^{in})}{\delta_\star^2} \ .
$$
Combining this estimate with \eqref{sc61} allows us to conclude that $t\mapsto M_1(f(t))\in L^2(0,\infty)$ and complete the proof. 
\end{proof}

\subsection{Uniqueness}
\label{sec:34}

The uniqueness issue has been investigated by several authors but the results obtained so far are restricted to mass-conserving solutions, an exception being the multiplicative kernel $K_2(x,y)=xy$. Actually two approaches have been developed to establish the uniqueness of solutions to \eqref{sc1}-\eqref{sc2}: a direct one which consists in taking two solutions and estimating a weighted $L^1$-norm of their difference and another one based on a kind of Wasserstein distance. To be more specific, since the pioneering works \cite{ML64, Me57}, uniqueness has been proved in \cite{BC90, DS96, Gi13, Jo03, Lt02, No99, St90} by the former approach and is summarized in the next result.

\begin{proposition}\label{pr.sc4}
Assume that there is a non-negative subadditive function $\varphi$ (that is, $\varphi(x+y) \le \varphi(x)+\varphi(y)$, $x>0$, $y>0$), such that 
$$
K(x,y) \le \varphi(x)\varphi(y)\ , \quad x>0\ , \ y>0\ .
$$
Let $T>0$ and $f^{in}\in L^1(0,\infty;\varphi(x)\ dx)$, $f^{in}\ge 0$. There is at most one solution 
$$
f\in C([0,T];L^1(0,\infty;\varphi(x) dx)) \cap L^1(0,T; L^1(0,\infty;\varphi(x)^2 dx))
$$
to \eqref{sc1}-\eqref{sc2}. 
\end{proposition}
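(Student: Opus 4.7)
The strategy is to estimate the weighted $L^1$ distance $\int_0^\infty \varphi(x)|f_1-f_2|(t,x)\,dx$ between two hypothetical solutions $f_1,f_2$ sharing the initial datum $f^{in}$, and to force it to vanish by a Gronwall argument. Writing $g:=f_1-f_2$, $F:=f_1+f_2$, and expanding the quadratic nonlinearity as
\[
f_1(x)f_1(y) - f_2(x)f_2(y) = g(x)f_1(y) + f_2(x)g(y),
\]
the weak formulation \eqref{sc23} applied to $f_1$ and to $f_2$, subtracted and symmetrized in $(x,y)$ via the symmetries of $K$ and of $\tilde\vartheta$, gives for every $\vartheta\in L^\infty(0,\infty)$ and $t\in[0,T]$
\[
\int_0^\infty \vartheta(x)\,g(t,x)\,dx = \frac{1}{2}\int_0^t\!\!\int_0^\infty\!\!\int_0^\infty \tilde\vartheta(x,y)K(x,y)g(s,x)F(s,y)\,dy\,dx\,ds,
\]
where I have used $g(0,\cdot)\equiv 0$.

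For $N\ge 1$ let $\varphi_N:=\min\{\varphi,N\}$, and at each time $t$ set $\sigma(x):=\operatorname{sgn}(g(t,x))$, so that the bounded function $\vartheta:=\varphi_N\sigma$ satisfies $\int\vartheta\, g(t,\cdot)\,dx=\int\varphi_N|g(t,\cdot)|\,dx$. Plugging this $\vartheta$ into the identity would be direct if $\sigma$ did not depend on $t$; the standard workaround is a chain-rule argument using the smooth convex approximation $j_\delta(r):=\sqrt{r^2+\delta^2}-\delta$ to $|r|$, which, after passing $\delta\to 0$, upgrades $t\mapsto\int\varphi_N|g(t,\cdot)|\,dx$ to an absolutely continuous function whose derivative obeys
\[
\frac{d}{dt}\int_0^\infty\varphi_N(x)|g(t,x)|\,dx \le \frac{1}{2}\int_0^\infty\!\!\int_0^\infty \tilde\vartheta(x,y)K(x,y)g(t,x)F(t,y)\,dy\,dx.
\]

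The central cancellation is this: expanding $\tilde\vartheta(x,y)=\varphi_N(x+y)\sigma(x+y)-\varphi_N(x)\sigma(x)-\varphi_N(y)\sigma(y)$, the middle piece combines with $g(t,x)$ to give exactly $-\varphi_N(x)|g(t,x)|$, while the two outer pieces are controlled pointwise using only $|\sigma|\le 1$. Together with the subadditivity $\varphi(x+y)\le\varphi(x)+\varphi(y)$, this yields
\[
\tilde\vartheta(x,y)\,g(t,x) \le \bigl[\varphi(x)-\varphi_N(x)+2\varphi(y)\bigr]|g(t,x)|.
\]
Inserting $K(x,y)\le\varphi(x)\varphi(y)$ and letting $N\to\infty$ via monotone convergence, the term $\varphi(x)-\varphi_N(x)$ disappears and we arrive at
\[
\frac{d}{dt}\int_0^\infty\varphi(x)|g(t,x)|\,dx \le \Lambda(t)\int_0^\infty\varphi(x)|g(t,x)|\,dx, \quad \Lambda(t):=\int_0^\infty\varphi(y)^2 F(t,y)\,dy.
\]
Since $\Lambda\in L^1(0,T)$ by hypothesis and the left-hand side vanishes at $t=0$, Gronwall's lemma delivers $\int\varphi|g(t,\cdot)|\,dx\equiv 0$ on $[0,T]$, i.e., $f_1\equiv f_2$.

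The main obstacle is the rigorous implementation of the second paragraph: the test function $\varphi_N\sigma$ is $t$-dependent, so the time-differentiation requires justifying a chain rule for $|g|$. The $j_\delta$-mollification circumvents this in the standard way, but one must control all the bilinear terms $\varphi(x)\varphi(y)^2|g(s,x)|F(s,y)$ in $L^1((0,T)\times(0,\infty)^2)$ uniformly in $N$ and $\delta$ in order to pass to the limits, and this is exactly what the assumption $f_i\in L^1(0,T;L^1(0,\infty;\varphi^2\,dx))$ provides.
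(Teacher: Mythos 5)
Your proof is correct and follows the paper's argument in every essential respect: the same Gronwall estimate on $\int_0^\infty\varphi\,|f_1-f_2|\,dx$, the same pointwise bound obtained from $\sigma g=|g|$, $|\sigma|\le 1$, and subadditivity (namely $\Theta(x,y)\,g(x)\le[\varphi(x+y)-\varphi(x)+\varphi(y)]\,|g(x)|\le 2\varphi(y)\,|g(x)|$), and the same use of $K(x,y)\le\varphi(x)\varphi(y)$ to arrive at $\Lambda(t)=\int_0^\infty\varphi(y)^2(f_1+f_2)(t,y)\,dy\in L^1(0,T)$. The $\varphi_N$-truncation and $j_\delta$-mollification you supply are a layer of rigour the paper leaves implicit (it differentiates $\int\varphi\,|f_1-f_2|$ directly from \eqref{sc1} with a time-dependent sign function); you have correctly identified this technical point and sketched the standard way to fill it.
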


Let us point out two immediate consequences of Proposition~\ref{pr.sc4}. First, if $\varphi(x)\le x^{1/2}$, Proposition~\ref{pr.sc4} implies the uniqueness of the solution constructed in Theorem~\ref{th.sc2} since it belongs to $L^\infty(0,\infty;L_1^1(0,\infty))$. Next, if $\varphi(x)=x$, it gives the uniqueness of solutions to \eqref{sc1}-\eqref{sc2} as long as $M_2(f)\in L^1(0,T)$. While this is only true up to a finite time $T$ in the general framework considered in Theorem~\ref{th.sc1}, it follows from Theorem~\ref{th.sc2} and Lemma~\ref{le.sc3} (with $\psi(x) = x^2$) that, if $f^{in}$ belongs to $L_2^1(0,\infty)$ and satisfies \eqref{sc18}, then the solution $f$ to \eqref{sc1}-\eqref{sc2} constructed in Theorem~\ref{th.sc2} is such that $t\mapsto M_2(f(t)) \in L^\infty(0,T)$ for any $T>0$. According to Proposition~\ref{pr.sc4}, this solution is unique.

\begin{proof}[Proof of Proposition~\ref{pr.sc4}] Let $f_1$ and $f_2$ be two solutions to \eqref{sc1}-\eqref{sc2} enjoying the properties listed in Proposition~\ref{pr.sc4}. We infer from \eqref{sc1} that
\begin{align*}
& \frac{d}{dt} \int_0^\infty |(f_1-f_2)(t,x)| \varphi(x)\ dx \\
& \qquad = \frac{1}{2} \int_0^\infty \int_0^\infty K(x,y) (f_1+f_2)(t,y) (f_1-f_2)(t,x) \Theta(t,x,y)\ dydx
\end{align*}
with $\sigma := \text{ sign}(f_1-f_2)$ and
$$
\Theta(t,x,y) := \left[ (\varphi\sigma)(t,x+y) - (\varphi\sigma)(t,x) -(\varphi\sigma)(t,y) \right]\ .
$$
Observing that
\begin{align*}
& (f_1-f_2)(t,x) \Theta(t,x,y) = |(f_1-f_2)(t,x)| \sigma(t,x) \Theta(t,x,y) \\
& \qquad = |(f_1-f_2)(t,x)| \left[ (\varphi\sigma)(t,x+y) \sigma(t,x) - \varphi(x) \sigma(t,x)^2 - (\varphi\sigma)(t,y) \sigma(t,x) \right] \\
& \qquad \le |(f_1-f_2)(t,x)| \left[ \varphi(x+y) - \varphi(x) + \varphi(y) \right] \\
& \qquad \le 2 \varphi(y) |(f_1-f_2)(t,x)|\ ,
\end{align*}
we further obtain
\begin{align*}
& \frac{d}{dt} \int_0^\infty |(f_1-f_2)(t,x)| \varphi(x)\ dx \\
& \qquad \le \int_0^\infty \int_0^\infty K(x,y) (f_1+f_2)(t,y)  |(f_1-f_2)(t,x)| \varphi(y) \ dydx \\
& \qquad \le \int_0^\infty \int_0^\infty \varphi(x) \varphi(y)^2 (f_1+f_2)(t,y) |(f_1-f_2)(t,x)|\ dydx \\
& \qquad = \int_0^\infty \varphi(y)^2 (f_1+f_2)(t,y)\ dy\  \int_0^\infty \varphi(x) |(f_1-f_2)(t,x)|\ dx\ ,
\end{align*}
and the conclusion follows by Gronwall's inequality. 
\end{proof}

The second approach stems from the study of the well-posedness of \eqref{sc1}-\eqref{sc2} for the constant kernel $K_0(x,y)=2$, the additive kernel $K_1(x,y)=x+y$, and the multiplicative kernel $K_2(x,y)=xy$ performed in \cite{MP04}. Roughly speaking, there is a uniqueness result for the kernel $K_i$ in the weighted space $L^1(0,\infty;x^i dx)$, $i=0,1,2,$. It is worth pointing out that the homogeneity of the weight matches that of the coagulation kernel. Though the main tool used in \cite{MP04} is the Laplace transform, an alternative argument involving a weighted Wasserstein distance has been developed in \cite{FL06} to extend this uniqueness result to a wider class of homogeneous kernels with arbitrary homogeneity. For simplicity, we restrict ourselves to the coagulation kernel 
\begin{equation}
K(x,y) = x^\alpha y^\beta + x^\beta y^\alpha\ , \quad x>0\ , \ y>0\ , \label{sc62}
\end{equation} 
and assume that its homogeneity $\lambda:=\alpha+\beta$ lies in $(0,1]$. We refer to \cite{FL06} for more general assumptions on $K$ and $f^{in}$ and homogeneities in $(-\infty,0)$ or in $(1,2)$. 

\begin{proposition}\label{pr.sc5}
Let $T>0$ and $f^{in}\in L_\lambda^1(0,\infty)$, $f^{in}\ge 0$. There is at most one solution $f\in C([0,T);w-L_\lambda^1(0,\infty))$ to \eqref{sc1}-\eqref{sc2} (recall that the space $L_\lambda^1(0,\infty)$ is defined in \eqref{sc3}).
\end{proposition}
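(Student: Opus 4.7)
The plan is to adapt the weighted Wasserstein-type technique of \cite{FL06} referenced in the excerpt. Since the kernel $K(y,z) = y^\alpha z^\beta + y^\beta z^\alpha$ is $\lambda$-homogeneous with $\lambda \in (0,1]$, the natural object to track is a cumulative distribution associated with the measure $y^\lambda f_i(t,y)\,dy$, and uniqueness should follow from a Gronwall-type estimate on a weighted $L^1$-distance between two such cumulatives.

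For two solutions $f_1,f_2$ with the same initial datum $f^{in}$, I would set
$$F_i(t,x) := \int_x^\infty y^\lambda f_i(t,y)\,dy, \qquad H(t,x) := F_1(t,x) - F_2(t,x).$$
The hypothesis $f_i \in C([0,T); w-L_\lambda^1(0,\infty))$ makes each $F_i$ continuous in $t$, non-increasing in $x$, with $F_i(t,\infty)=0$, and yields $H(0,\cdot)\equiv 0$. To convert \eqref{sc1} into an integral identity for $H$, I would plug $\vartheta(y) := y^\lambda \mathbf{1}_{[x,\infty)}(y)$ into the weak formulation \eqref{sc23} for each $f_i$ and subtract; this test function is not in $L^\infty(0,\infty)$ but is legal after the usual truncation/approximation step, justified by the finiteness of $M_\lambda(f_i(t))$. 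Then I would introduce a distance
$$D(t) := \int_0^\infty |H(t,x)|\,\mu(x)\,dx$$
with a weight $\mu$ tuned to the scaling of $K$ (the natural candidate being $\mu(x) \sim x^{-1}$, modified near $0$ to ensure integrability), so that $D$ has the flavour of a Wasserstein distance between the monotone profiles $F_1(t,\cdot)$ and $F_2(t,\cdot)$. The target is to deduce from the integral identity the inequality
$$D(t) \le C\int_0^t D(s)\,ds, \qquad t\in [0,T),$$
and conclude by Gronwall's lemma that $D\equiv 0$, hence $F_1\equiv F_2$ and $f_1 \equiv f_2$.

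The main obstacle will be closing this Gronwall loop. After the antisymmetrization $f_1 f_1 - f_2 f_2 = (f_1-f_2) f_1 + f_2 (f_1-f_2)$ and a Fubini rearrangement turning one of the mass variables into the cumulative variable $x$, the quadratic coagulation term must be dominated by $\int_0^\infty |H(t,x)|\,\omega(x)\,dx$ with $\omega$ controlled by $\mu$. This is exactly where the homogeneity constraint $\lambda \in (0,1]$ becomes crucial: the pointwise bound $K(y,z) \le 2(y^\lambda + z^\lambda)$ (from Young's inequality applied to $y^\alpha z^\beta$ and $y^\beta z^\alpha$) together with the subadditivity $(y+z)^\lambda \le y^\lambda + z^\lambda$, valid precisely when $\lambda \le 1$, are what allow the product weights $y^\alpha z^\beta$ appearing in the triple integral to be decoupled and absorbed into cumulative differences of the correct type. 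A secondary technical point, but a routine one, is the justification of the unbounded, discontinuous test function $\vartheta$ in \eqref{sc23}, which reduces to controlling the $\lambda$-moment at infinity by the hypothesis and the behaviour near zero by the non-negativity of $f_i$.
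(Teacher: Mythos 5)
Your high-level plan (pass to cumulative distribution functions, set up a weighted $L^1$-distance between tails, aim for a Gronwall inequality) is the right one and is indeed the strategy of \cite{FL06} that the paper follows. However, there are two concrete gaps in the proposal as written, and the second is a missing idea rather than a routine technicality.

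First, the specific objects you chose are problematic. The paper does \emph{not} weight the cumulative: it takes $F_i(t,x)=\int_x^\infty f_i(t,y)\,dy$, sets $E=F_1-F_2$, and works with the distance $\int_0^\infty x^{\lambda-1}|E(t,x)|\,dx$. This converges because $|E|$ is bounded and $x^{\lambda-1}$ is integrable near the origin for $\lambda>0$, while $|E(t,x)|\le \int_x^\infty|f_1-f_2|(t,y)\,dy\lesssim x^{-\lambda}$ kills the weight at infinity (using $f_i\in L^1_\lambda$). Your choice $H(t,x)=\int_x^\infty y^\lambda(f_1-f_2)(t,y)\,dy$ paired with $\mu(x)\sim x^{-1}$ gives a divergent integral near $x=0$: $H(t,0)$ is in general a nonzero finite number and $x^{-1}$ is not integrable there. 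You acknowledge the need to modify $\mu$ near $0$, but such a modification breaks the homogeneity matching that is supposed to close the Gronwall loop, and you do not indicate how to recover it.

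Second, and more importantly, the crucial device of the actual argument is absent. When you substitute $\vartheta(y)=y^\lambda\mathbf{1}_{[x,\infty)}(y)$ into \eqref{sc23} and subtract, the resulting bilinear expression involves $(f_1-f_2)$ directly, \emph{not} $|H|$, and the function $\tilde\vartheta(y,z)=\vartheta(y+z)-\vartheta(y)-\vartheta(z)$ has genuinely mixed sign (positive on the region $\{y,z<x\le y+z\}$, negative on $\{y,z\ge x\}$). There is no direct domination of this expression by $\int_0^\infty |H(t,x)|\,\omega(x)\,dx$; the sign cancellations have to be exploited, not discarded. The paper (following \cite[Proposition~3.3]{FL06}) does this by introducing the auxiliary sign-integrated function
\[
R(t,x)=\int_0^x z^{\lambda-1}\,\mathrm{sign}\!\left(F_1-F_2\right)(t,z)\,dz ,
\]
integrating by parts in $x$, and splitting the resulting quantity as $\tfrac12(A_1+A_2)$ where $A_1$ is manifestly nonpositive (because $\lambda\le 1$ makes $(x+y)^{\lambda-1}-x^{\lambda-1}\le 0$) and $A_2$ is bounded by $C\,M_\lambda\big((f_1+f_2)(t)\big)\int_0^\infty x^{\lambda-1}|E(t,x)|\,dx$ using the pointwise bound $|\tilde R(t,x,y)|\le\tfrac{2}{\lambda}\min\{x,y\}^\lambda$ (subadditivity of $z\mapsto z^\lambda$) together with $|\partial_x K(x,y)|\lesssim x^{\lambda-1}y^\lambda$ on the region where $\min\{x,y\}$ is the smaller variable. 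Nothing in your outline produces the sign-favourable term $A_1\le 0$ or the compensated estimate for $A_2$; without this, the triple integral you describe cannot be closed into a Gronwall inequality. Your bound $K(y,z)\le 2(y^\lambda+z^\lambda)$ is also too crude here (and requires $\alpha,\beta\ge 0$): it throws away the product structure of the kernel that the $\partial_x K$-estimate relies on.

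In short, the framework you sketch is the right one, but you need (a) the unweighted cumulative with the weight $x^{\lambda-1}$ in the distance so that the integral converges and the homogeneities match, and (b) the sign-integrated auxiliary $R$ together with the decomposition into a nonpositive piece and a Gronwall-compatible piece; without (b) the argument does not close.
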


\begin{proof} Let $f_1$ and $f_2$ be two solutions to \eqref{sc1}-\eqref{sc2} enjoying the properties listed in Proposition~\ref{pr.sc5}. For $i=1,2$, we introduce the cumulative distribution function $F_i$ of $f_i$ given by
$$
F_i(t,x) := \int_x^\infty f_i(t,y)\ dy\ , \quad t>0\ , \ x>0\ ,
$$
and set $E:=F_1-F_2$ and
$$
R(t,x) := \int_0^x z^{\lambda-1} \text{ sign}(F_1-F_2)(t,z)\ dz\ , \quad t>0\ , \ x>0\ .
$$
We infer from \eqref{sc1} after some computations (see \cite[Proposition~3.3]{FL06}) that 
\begin{equation}
\frac{d}{dt} \int_0^\infty x^{\lambda-1} |E(t,x)|\ dx \le \frac{1}{2} \left( A_1(t)+A_2(t) \right)\ , \label{sc63}
\end{equation}
where
$$
A_1(t) := \int_0^\infty \int_0^\infty K(x,y) \left[ (x+y)^{\lambda-1} - x^{\lambda-1} \right]  (f_1+f_2)(t,y) |E(t,x)|\ dydx
$$
and 
\begin{align*}
A_2(t) & := \int_0^\infty \int_0^\infty \partial_x K(x,y) \tilde{R}(t,x,y) (f_1+f_2)(t,y) E(t,x)\ dydx\ , \\
\tilde{R}(t,x,y) & := R(t,x+y) - R(t,x) - R(t,y)\ .
\end{align*}

On the one hand, since $\lambda<1$,
$$
K(x,y) \left[ (x+y)^{\lambda-1} - x^{\lambda-1} \right] \le 0\ , \quad x>0\ , \ y>0\ ,
$$
so that
\begin{equation}
A_1(t) \le 0 \ . \label{sc64}
\end{equation}
On the other hand, it follows from the definition of $R$ and the subadditivity of $x\mapsto x^\lambda$ that
\begin{align*}
\left| \tilde{R}(t,x,y) \right| & = \left| \int_{\max\{x,y\}}^{x+y} z^{\lambda-1} \text{ sign}(E)(t,z)\ dz + \int_0^{\min\{x,y\}} z^{\lambda-1} \text{ sign}(E)(t,z)\ dz \right| \\
& \le \frac{1}{\lambda} \left[ (x+y)^\lambda - \max\{ x,y \}^\lambda + \min\{ x,y \}^\lambda \right] \\
& \le \frac{2}{\lambda} \min\{ x,y \}^\lambda\ .
\end{align*}
We deduce from the previous inequality and \eqref{sc62} that there is $C_5>0$ depending only on $\alpha$ and $\beta$ such that
$$
|\partial_x K(x,y)| \left| \tilde{R}(t,x,y) \right| \le C_5\ x^{\lambda-1} y^\lambda\ .
$$
Consequently, 
\begin{equation}
A_2(t) \le C_5 \ M_\lambda((f_1+f_2)(t))\ \int_0^\infty x^{\lambda-1} |E(t,x)|\ dx\ . \label{sc65}
\end{equation}
Collecting \eqref{sc63}-\eqref{sc65} we end up with
$$
\frac{d}{dt} \int_0^\infty x^{\lambda-1} |E(t,x)|\ dx \le \frac{C_5}{2}\ M_\lambda((f_1+f_2)(t))\ \int_0^\infty x^{\lambda-1} |E(t,x)|\ dx\ ,
$$
and the conclusion follows by integration. 
\end{proof}

Even though there is a version of Proposition~\ref{pr.sc5} when $K$ is given by \eqref{sc62} with $\lambda\in (1,2]$ (and is thus a gelling kernel), the requirement $f\in C([0,T);w-L_\lambda^1(0,\infty))$ is only true for $T<T_{gel}$ and thus provides no clue about uniqueness past the gelation time.

The only result we are aware of which deals with the uniqueness of solutions exhibiting a gelation transition is available for the multiplicative kernel $K_2(x,y)=xy$. In that particular case, using the Laplace transform, it is possible to characterize $M_1(f(t))$ for all times, prior and past the gelation time, and this information allows one to prove uniqueness \cite{Du94b, Ko88, NZ11, vRS06, SvR01}.

\section*{Acknowledgments}

These notes grew out from lectures I gave at the Institut f\"ur Angewandte Mathematik, Leibniz Universit\"at Hannover, in September 2008 and at the African Institute for Mathematical Sciences, Muizenberg, in July 2013. I thank Jacek Banasiak, Joachim Escher, Mustapha Mokhtar-Kharroubi, and Christoph Walker for their kind invitations as well as both institutions for their hospitality and support. This work was completed while visiting the Institut Mittag-Leffler, Stockholm.



\end{document}